\documentclass{article}
\usepackage{amssymb}
\usepackage{amsmath}
\usepackage{amsthm}
\usepackage{verbatim}
\usepackage{color}
\usepackage{url}
 \usepackage[all]{xy}
\usepackage{fancyhdr}
\usepackage{graphicx,amsmath,amsfonts,latexsym,amssymb,amsthm,color}
\usepackage{latexsym}
\usepackage{verbatim}
\usepackage{enumerate}
\usepackage{enumitem}

\usepackage[a4paper]{geometry}
\definecolor{blau}{rgb}{0.05,0.2,0.7}
\definecolor{auchblau}{rgb}{0.03,0.3,0.7}
\usepackage[pdftex,linkbordercolor={0.8 0.5 0.2}]{hyperref} 

\newtheorem{theorem}{Theorem}
\newtheorem*{theorem*}{Theorem}
\newtheorem{prop}{Proposition}
\newtheorem{lem}{Lemma}
\newtheorem{cor}{Corollary}
\newtheorem{defn}{Definition}

\newcommand{\R}{\mathbb{R}}
\newcommand{\C}{\mathbb{C}}
\newcommand{\N}{\mathbb{N}}

\renewcommand{\Im}{\operatorname{Im}}

\DeclareMathOperator{\supp}{supp}
\newcommand{\der}{\mathrm{d}}
\newcommand{\rmi}{\mathrm{i}}
\newcommand{\ee}{\mathrm{e} }

\newcommand{\rel}{\mathrm{rel}}

\newcommand{\free}{\textrm{free}}

\newcommand{\loc}{\mathrm{loc}}

\bibliographystyle{plain}

\numberwithin{equation}{section}
 
\title{Dispersive Estimates for Maxwell's equations in the exterior of a sphere}
\author{Yan-long Fang \thanks{Department of Mathematics, University College London, WC1H 0AY, United Kingdom, yanlong.fang@ucl.ac.uk} \hspace{0.05cm}, \quad Alden Waters \thanks{Institute for Analysis, Department of Mathematics, Leibniz University Hannover, Welfengarten 1, 30167 Hannover, Germany, alden.waters@math.uni-hannover.de} \hspace{0.05cm}}

\begin{document}
\maketitle

\begin{abstract}
The goal of this article is to establish general principles for high frequency dispersive estimates for Maxwell's equation in the exterior of a perfectly conducting ball. We construct entirely new generalized eigenfunctions for the corresponding Maxwell propagator. We show that the propagator corresponding to the electric field has a global rate of decay in $L^1-L^{\infty}$ operator norm in terms of time $t$ and powers of $h$. In particular we show that some, but not all, polarizations of electromagnetic waves scatter at the same rate as the usual wave operator. The Dirichlet Laplacian wave operator $L^1-L^{\infty}$ norm estimate should not be expected to hold in general for Maxwell's equations in the exterior of a ball because of the Helmholtz decomposition theorem. 
\end{abstract}


\textbf{keywords:} Maxwell's equations, dispersive estimates \\

\textbf{MSC classes:} 35R01, 35R30, 35L20, 58J45, 35A22

\section{Introduction}
Determination of decay rates for Maxwell’s equations with time independent metrics is otherwise known as the Maxwell scattering problem. Maxwell's equations are vector-valued and in the presence of an obstacle mixed boundary conditions make it so they are not reducible to standard evolution equations even in the constant coefficient case. The Maxwell scattering problem is therefore more difficult than the wave or Helmholtz problem as the presence of a nontrivial topology can amplify or spread the initial pulse of waves.  This type of geometric analysis is important because Maxwell's equations are fundamental in electromagnetism, astrophysics, and nanotechnology. Here we are interested in the specific question of the rate of decay of these waves in time in the exterior of a ball. The property of a wave to spread out as time goes by while keeping its energy unchanged is called dispersion. In three dimensions it is claimed that light bends effectively at the same rate as the free space \cite{LI,LI2} (Euclidean, no obstacle case). Therefore there is no change in what is known as dispersive estimates in three dimensions for the ball. The wave equation more appropriately describes sound waves. But this important work \cite{LI} is the first to consider the case of dispersion with an obstacle present, in this case a sphere, and prove a non-local estimate. 

Light is not governed by the wave equation, but by Maxwell's equations. However surprisingly for electromagnetic radiation the global exterior dispersive decay rate remains unknown in the presence of obstacles. This decay rate is unknown even just for a sphere. In our case, we show using Mie scattering that a weaker than Euclidean $L^1\textendash L^{\infty}$ dispersive estimate still holds uniformly in the exterior of a sphere for the Maxwell propagator. This phenomenon has its roots in the Helmholtz decomposition theorem. A major goal of this article is to connect spectral theory for the Laplacian to Mie scattering theory for balls with perfectly conducting boundary conditions. In particular we construct generalized eigenfunctions in the exterior of a ball which solve the time-harmonic electric field problem. The idea is that the spectral theory solutions are fixed by the boundary conditions on the object and their behavior at infinity. These representations are exact for the ball. We then use these spectral theory representations to describe the kernels of evolution operators. 

The study of dispersive estimates has a long history in the function case. Exterior dispersion was studied for the Schr\"odinger and wave case in \cite{LI, LI2, VK}. While energy estimates have been done for Maxwell in a variety of mixed spaced settings, genuine $L^{1}-L^{\infty}$ estimates have not been examined. In the non-compact case in general geometric settings, $L^r-L^q$ for $r>1$ estimates for $p$-forms has been examined in the work of \cite{gs} and Maxwell's equations in \cite{AS}. Resolvent estimates which are related for $p$-forms can be found in \cite{gh1,gh2} and the often used here \cite{OS}. The technique of constructing a generalized eigenfunction is similar to those asymptotic expansions used to establish general scattering rates for functions in \cite{gh3,Parnovski}. Related literature in the case of convex bodies for the Dirichlet Laplacian includes \cite{Zscatter, HL, IS, dl} and Mie scattering in dimension 2, \cite{Galk}. Strichartz estimates in some related cases for Maxwell's equations are established in \cite{Sc1,Sc2}. We mention also that recently Strichartz estimates have been established for Lipschitz domains for the interior Maxwell problem in \cite{bs} using the language of $p$-forms in the case of non-constant coefficients. One can see also \cite{sp1, sp2, sp3} for recent progress on Strichartz estimates in the interior case with variable coefficients. The estimates here could be useful for establishing existence and uniqueness for semi-linear Maxwell's equations as done in the Dirichlet function case of the exterior of convex bodies  \cite{hs}. We hope that the construction of the kernel using spectral theory could be useful in the related examination of Price's law \cite{hintz,wunsch} in the future.  
 
\section{Precise setup and notation} \label{sec:1.1}

We consider the space $\mathbb{R}^3$ and a single smooth obstacle $\mathcal{O}$, an open ball, removed from the space. We set $\mathbb{R}^3\setminus \mathcal{O}=M$. The time harmonic Maxwell system is given by 
\begin{align}\label{system}
&\mathrm{curl} E-\rmi \lambda H =0 \\ \nonumber
&\mathrm{div}E =0 \\  \nonumber
& \mathrm{curl}H+\rmi \lambda E=0 \\ \nonumber
& \mathrm{div}E=0\\ \nonumber
& \nu\times E=A \\ \nonumber
& \nu\cdot H=f
\end{align}
where the first four equations hold in $M$. Here $\nu$ is the everywhere defined outward pointing unit normal vector field on $\partial\mathcal{O}$. We assume that $\lambda\in \mathbb{R}$ unless otherwise stated. The system for $E$ then becomes
\begin{align}\label{esystem}
&\Delta E-\lambda^2 E=0 \quad \mathrm{in}\quad M \\
&\mathrm{div} E=0 \quad \mathrm{in}\quad M \nonumber\\
& \nu\times E=A \quad \mathrm{on}\quad \partial\mathcal{O}. \nonumber
\end{align}
The associated spectral theory problem is that of the Laplace-Beltrami operator $\Delta$ (with positive spectrum) on divergence-free vector fields with the corresponding boundary condition, c.f. intro to \cite{AWE} for a complete exposition. We use sign convention for the Laplacian which gives it positive spectrum and are careful to translate any results from the analysis literature to this convention.  The boundary conditions $\nu\times E=0$ for the electric field on $\partial\mathcal{O}$ leads to the relative Laplacian $\Delta_{\rel}$ by quadratic form considerations. Similarly for the magnetic field the boundary condition $\nu\cdot H=0$ leads to the absolute Laplace operator $\Delta_{\mathrm{abs}}$. Both are self-adjoint operators on $L^2(M,\mathbb{C}^3)$. 

We now recall some definitions. The set of distributions $\mathcal{D}'(M,E)$ on a Riemannian manifold taking values in a real or complex vector space E is the vector space of continuous linear functionals on $C_0^{\infty}(M,E^*)$. We understand $C^{\infty}(M,E)$ as the subspace of $\mathcal{D}'(M,E)$ using the bilinear pairing $C^{\infty}(M,E)\times C_0^{\infty}(M,E^*)\mapsto \mathbb{C}$ induced by integration with respect to volume. For an open set $\mathcal{U}\subset\mathbb{R}^3$, the standard Sobolev spaces $L^2$ are denoted as $H^s(\mathcal{U})$ or $H^s(\mathcal{U})$. We write $H^s_{comp}(\mathcal{U})$ for the space of elements in $H^s(\mathcal{U})$ with compact support, to distinguish them from the closure of $H_0^s(\mathcal{U})$ in $H^s(\mathcal{U})$. The local Sobolev spaces $H^s_{loc}(\mathcal{U})$ are defined, as usual, as the space of distributions $\phi\in \mathcal{D}'(U)$ such that $\chi\phi\in H^s(U)$ for all $\chi\in C_0^{\infty}(\mathcal{U})$.  

Let $D\subset M$ be a bounded open set. We now define the following spaces of vector fields that are standard in Maxwell theory
\begin{align*}
&H(\mathrm{curl},D)=\{f\in L^2(D,\mathbb{C}^3)|\,\,\mathrm{curl} f\in L^2(D,\mathbb{C}^3)\}\\
&H^{-\frac{1}{2}}(\mathrm{Div}\, 0,\partial\mathcal{O})=\{f\in H^{-\frac{1}{2}}(\partial\mathcal{O};T\partial\mathcal{O})| \,\,\mathrm{Div} f=0\} \\
&H(\mathrm{curl}\,0, D)=\{ u\in H(\mathrm{curl},D): \mathrm{curl}\,u=0\,\, \textrm{in}\,\, D\} \\
&H_0(\mathrm{curl}\,0,D)=\{ u\in H_0(\mathrm{curl},D): \mathrm{curl}\,u=0\,\, \textrm{in}\,\, D\}\\
&\nabla H_0^1(D)=\{\nabla p: p\in H_0^1(D)\}\\
&H_0(\mathrm{curl}, \mathrm{div}\, 0, D)=\{v\in H_0(\mathrm{curl},D):  \langle v,\nabla\varphi\rangle_{L^2(D)}=0 \,\,\forall \varphi\in H_0^1(D)\}\\
&L^2(\mathrm{div}\, 0,D)=\{v\in L^2(D,\mathbb{C}^3): \langle v,\nabla\varphi\rangle_{L^2(D)}=0\,\, \forall \varphi\in H_0^1(D)\}.
\end{align*}
We also remind the reader that for any bounded open set $D\subset M$ the space $H_0(\mathrm{curl},D)$ is the closure of $C_0^{\infty}(D,\mathbb{C}^3)$ vector fields in $H(\mathrm{curl},D)$.

The corresponding resolvent for the Laplace-Beltrami operator we denote as $R_{\lambda}$, formally $R_{\lambda}=(\Delta_{\rel}-\lambda^2)^{-1}$. By domain considerations, $R_{\lambda}: L^2_{comp}(M,M^3)\rightarrow H^2_{loc}(M,M^3)$ c.f. Appendix B \cite{OS}. We also recall as part of the Helmholtz decomposition theorem (as stated by Theorem 4.23 in \cite{KH}): 
\begin{align}\label{Helmholtzdecomp}
H_0(\mathrm{curl},D)
= H_0(\mathrm{curl}, \mathrm{div}\, 0, D)\oplus \nabla H_0^1(D)
\end{align}
and
\begin{align}\label{L2decomp}
L^2(D,\mathbb{C}^3)=L^2(\mathrm{div}\,0,D)\oplus \nabla H_0^1(D).
\end{align}

\section{Statement of the main theorem}
We start with the following definition.
\begin{defn}
We say a cut-off function is:
\begin{itemize}
\item A high frequency cut-off function if $\varphi \in C^\infty_0(\R^+)$ is a non-negative function with support in a neighbourhood of $\lambda=a$ for some positive $a$ with $\varphi(\lambda)=1$ in a smaller neighbourhood of $\lambda=a$. Without loss of generality, we can assume $\supp{\varphi}\subset (\frac{1}{2}a,\frac{3}{2}a)$.
\item A low frequency cut-off function if $\varphi \in C^\infty(\overline{\R^+})$ is a non-negative function and satisfies $\varphi(0)=1$ while still having compact support.
\end{itemize}
\end{defn}
In our estimates we only consider high frequency cutoffs. This avoids some additional low frequency Bessel function expansions which are technical and we will report on these at a later time. When there is no obstacle in the problem, the spectral representation of the wave operator for the Laplacian on vector fields can be calculated explicitly in terms of Bessel functions. For the free case, we use the subscript $\free$ to emphasise this fact for various quantities. Our main theorem, Theorem \ref{spheretheorem2}, is for $M=\mathbb{R}^3\setminus\mathcal{O}$ where $\mathcal{O}$ is an open ball of fixed radius $\rho\geq 1$ with perfectly conducting boundary conditions. Let $R$ be a constant and $M\cap B_{R}(0)=M_{R}$ where $B_{R}(0)$ is a ball of radius $R$. We consider the domain $H_0(\mathrm{curl}, \mathrm{div}\, 0, M_{3R})$, as the underlying domain for the operators in question so that the estimates make sense. The constant $C$ in all of the theorems depends on at most the dimension, $a$ and $\rho$. The interpretation of our Maxwell propagator in 3 dimensions with relative boundary conditions is the electric field with constant dielectric and permittivity constants. Let $D_t=-i\partial_t$, our main theorem is then:
 
\begin{theorem}\label{spheretheorem2}  Let $d=3$ and $\mathcal{O}$ be a ball of radius $\rho$ centered at the origin. Let $R$ be a constant and $M\cap B_{R}(0)=M_{R}$ where $B_{R}(0)$ is a ball of radius $R$. Furthermore let $\varphi$ be a high frequency cutoff, $\rho\geq \frac{2}{a}$, $a>h, R>\rho\geq 1$. Let $C$ be a nonzero constant depending on $\rho$ and $a$ only. The following dispersive estimate holds with $\Delta_{\mathrm{rel}}$ for any $h\in (0,\frac{1}{4})$:
\begin{align}
&\left\|\varphi(hD_t)\cos(t\sqrt{\Delta_{\mathrm{rel}}})\right\|_{L^1(M_{R},M_{R}^3)\rightarrow L^{\infty}(M_{3R},M_{3R}^3)}
\leq Ch^{-5}\mathrm{min}\left\{1,\left(\frac{R}{|t|}\right)\right\}
\end{align}
whenever the underlying domain for the propagator is also in $H_0(\mathrm{curl}, \mathrm{div}\, 0, M_{3R})$.
\end{theorem}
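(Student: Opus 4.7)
The plan is to represent the Schwartz kernel of $\varphi(hD_t)\cos(t\sqrt{\Delta_{\rm rel}})$ as a spectral integral against the Mie-scattering generalized eigenfunctions constructed earlier in the paper, and then to estimate that kernel pointwise in two ways: trivially, to get the uniform bound $Ch^{-5}$ (the ``$1$'' in the minimum), and by one integration by parts in the spectral parameter $\lambda$, to get the extra factor $R/|t|$. Because $\varphi$ is supported in $(a/2,3a/2)$ away from zero, $\varphi(hD_t)$ projects onto the positive-frequency branch of $\cos$ and effectively acts as $\tfrac12\varphi(h\sqrt{\Delta_{\rm rel}})e^{it\sqrt{\Delta_{\rm rel}}}$; restricting the domain to $H_0(\mathrm{curl},\mathrm{div}\,0,M_{3R})$ then forces the longitudinal (gradient) family in \eqref{Helmholtzdecomp}--\eqref{L2decomp} to drop out, so only the TE and TM Mie families contribute.

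By Stone's formula the kernel becomes
\[
K(t,x,y) = \tfrac12\int_0^\infty \varphi(h\lambda)e^{it\lambda}\sum_{\sigma\in\{\mathrm{TE},\mathrm{TM}\}}\sum_{\ell,m} e^{\sigma}_{\ell,m}(x,\lambda)\overline{e^{\sigma}_{\ell,m}(y,\lambda)}\,\lambda\,d\lambda,
\]
where each $e^{\sigma}_{\ell,m}(x,\lambda)$ is a vector spherical harmonic in the angular variables times a radial profile $\psi^{\sigma}_\ell(\lambda r)$ of the form $j_\ell(\lambda r) + S^{\sigma}_\ell h^{(1)}_\ell(\lambda r)$. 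Here the $\ell$-dependent scattering coefficients $S^{\sigma}_\ell$ are determined by $\nu\times E=0$ at $r=\rho$, and satisfy $|S^{\sigma}_\ell|\lesssim e^{-c\ell}$ once $\ell\gg\lambda\rho\asymp \rho/h$. I would split $\psi^{\sigma}_\ell$ accordingly into a ``free'' piece $j_\ell$ and a ``scattered'' piece, and after summing in $m$ via the (vector) spherical harmonic addition theorem the free piece matches the usual free spectral kernel, while the scattered piece has its angular sum effectively truncated at $\ell\lesssim\rho/h$.

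For the $Ch^{-5}$ branch I bound everything in absolute value: on $\mathrm{supp}\,\varphi(h\cdot)$ one has $\lambda\asymp h^{-1}$ over an interval of length $\asymp h^{-1}$; uniform Debye-type asymptotics for $j_\ell$ and $h^{(1)}_\ell$ in the three regimes $\ell\ll\lambda r$, $\ell\sim\lambda r$, $\ell\gg\lambda r$, combined with the exponential decay of $S^{\sigma}_\ell$ for $\ell\gtrsim\rho/h$, force the angular sum to terminate with at most $(\rho/h)^2$ non-negligible terms. Tallying the $\lambda$-integral, the spectral weight $\lambda$, the angular sum, and the standard vector-spherical-harmonic pointwise bounds produces the claimed $Ch^{-5}$ with $C$ depending only on $\rho$ and $a$. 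For the $Ch^{-5}R/|t|$ branch I integrate by parts once in $\lambda$, replacing $e^{it\lambda}$ by $t^{-1}\partial_\lambda e^{it\lambda}$; a $\partial_\lambda$ on $\varphi(h\lambda)$ contributes only an extra $h$, on $j_\ell(\lambda r)$ or $h^{(1)}_\ell(\lambda r)$ contributes at most a factor of $r\leq 3R$ via the Bessel recurrences, and on $S^{\sigma}_\ell$ a factor of $\rho$; the remaining amplitude obeys the same bounds as before, giving the $R/|t|$ gain.

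The main obstacle is the uniform control of $\psi^{\sigma}_\ell$ and $\partial_\lambda\psi^{\sigma}_\ell$ across all angular momenta in the transition regime $\ell\sim\lambda\rho$, where the individual Bessel and Hankel components oscillate and grow in opposite ways and where the standard large-$\ell$ asymptotics cross over to Airy-type expansions. The specific Mie combination dictated by the boundary condition at $r=\rho$ is designed precisely so that this $\ell$-dependent interference is harmless and the scattering coefficients $S^{\sigma}_\ell$ decay at the right rate; this is what makes the $h^{-5}$ count uniform in $R$ and makes the integration by parts lose only a linear factor $R$ rather than an additional power of $h^{-1}$ per angular mode.
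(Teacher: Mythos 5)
Your skeleton matches the paper's: Stone's formula for the kernel built from the Mie generalized eigenfunctions, an angular truncation at $\ell\lesssim \rho/h$ coming from the rapid decay of the Mie coefficients in the shadow regime, and one integration by parts in $\lambda$ to produce the $R/|t|$ factor. But the key quantitative content that produces $h^{-5}$ — rather than the wave-rate $h^{-3}$ — is missing, and your claim that the transition regime $\ell\sim\lambda\rho$ is ``harmless'' is precisely the opposite of what the proof shows.

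Concretely, the paper rewrites the scattered radial profiles as outgoing Bessel data against $\rho$ via the matrices $B_{j,\ell,\lambda}$, whose entries have the Mie denominators $h^{(1)}_\ell(\lambda\rho)$ (TE) and $(zh^{(1)}_\ell(z))'|_{z=\lambda\rho}$ (TM). The TE matrix satisfies $|B_{1,\ell,\lambda/h}|\le 1$ for all $\ell$, so the TE family alone would reproduce the free $h^{-3}$ bound by Cauchy--Schwarz against $K^{\Delta_{\mathrm{free}}}$. The whole point of Lemma \ref{matrixbound} is that the TM denominator $(zh^{(1)}_\ell(z))'$ has only the weaker lower bound $|(zh^{(1)}_\ell(z))'|\gtrsim h\,|zh^{(1)}_\ell(z)|$ in the transition regime $\ell/e<\lambda\rho/h<2(\ell+1)$, giving $|B_{2,\ell,\lambda/h}|,|B_{3,\ell,\lambda/h}|\le Ch^{-1}$ rather than $O(1)$. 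After Cauchy--Schwarz this costs $h^{-2}$ on top of the free $h^{-3}$, which is exactly the $h^{-5}$ in the statement, and it is the phenomenon flagged in the abstract (``some, but not all, polarizations scatter at the wave rate''). Your proposal never isolates this lower bound on the TM Mie denominator near the turning point, and your general Debye/Airy uniform-asymptotics argument would actually show that $(zh^{(1)}_\ell(z))'$ can nearly vanish there, so the interference you call harmless is the source of the loss, not a cancellation.

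A secondary gap is in the integration-by-parts step. You assert that $\partial_\lambda$ on the radial profile contributes ``at most a factor of $r\le 3R$.'' That is correct for $B_{1,\ell,\lambda/h}$, whose $\lambda$-derivative is $O(h^{-1}\max\{r,(\ell+1)h\})$, but for the TM entries the paper's bound is $|\partial_\lambda B_{2,\ell,\lambda/h}|\le Cr/h^2$ near the turning point, i.e.\ the derivative is not just $r$ times the amplitude but $r/h$ times it. Your bookkeeping would then undercount the powers of $h$ unless you separately track, as the paper does, how the amplitude and its derivative each behave in the three $\ell$-regimes for each polarization. Without the TM lower bound in the transition regime and its derivative analogue, the proof does not close.
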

If the domain of the propagator is the space of $f$ with $f\in H^{1}_{comp}(M;\mathbb{R}^3)$ which are divergence free on $M$ in the weak sense, we also have the following corollary
\begin{cor}\label{indepcor}
We have that 
\begin{align}
\left\|\varphi(hD_t)e^{\rmi t\sqrt{\Delta_{\mathrm{rel}}}}\right\|_{L^1(M,\mathbb{C}^3)\rightarrow L^{\infty}(M,\mathbb{C}^3)}
\leq Ch^{-5}\mathrm{min}\left\{1,\left(\frac{1}{|t|}\right)\right\}
\end{align}
where $C$ depends only on $\rho$ and $a$.
\end{cor}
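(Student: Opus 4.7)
The plan for Corollary~\ref{indepcor} is to establish the stronger pointwise kernel bound $|K(t,x,y)| \leq C h^{-5}\min\{1, 1/|t|\}$ uniformly on $M \times M$, where $K$ denotes the matrix Schwartz kernel of $\varphi(hD_t)\, e^{\rmi t\sqrt{\Delta_{\rel}}}$ restricted to divergence-free fields. Since $\|T\|_{L^1\to L^\infty} = \|K\|_{L^\infty(M\times M)}$, this pointwise bound yields the corollary. A naive appeal to Theorem~\ref{spheretheorem2} with the choice $R = \max(|x|/3,\, |y|,\, 2\rho)$ is insufficient: in the wavefront regime $|x-y| \approx |t|$ with both coordinates large, this $R$ is of order $|t|$, so $R/|t|$ saturates at $O(1)$ and the theorem yields only $|K| \leq C h^{-5}$, losing the $|t|^{-1}$ decay. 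The proof must therefore revisit the generalized eigenfunction machinery on which Theorem~\ref{spheretheorem2} is built.

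Finite speed of propagation for the frequency-localized wave propagator gives $|K(t,x,y)| = O_N(h^N)$ for $|x-y| > |t|+1$, so only the thickened light cone is relevant. In the easy subregime $|x|,|y| \leq 2\rho$, Theorem~\ref{spheretheorem2} applied with the $t$-independent choice $R = 2\rho$ produces $|K(t,x,y)| \leq C h^{-5}\min\{1,\, 2\rho/|t|\}$, which after absorbing $\rho$ into $C$ is of the required form. It remains to handle the case where at least one of $|x|,|y|$ exceeds $2\rho$.

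For this remaining case my plan is to use the generalized eigenfunction expansion constructed earlier in the paper to write
\begin{align*}
K(t,x,y) = \int_0^\infty \varphi(h\lambda)\, e^{\rmi t\lambda}\, e_\lambda(x,y)\, d\lambda,
\end{align*}
where $e_\lambda$ is the spectral density of $\sqrt{\Delta_{\rel}}$ on divergence-free fields. In the far field the eigenfunctions split into outgoing free spherical waves plus reflected contributions modulated by the Mie scattering coefficients, so the integrand has oscillatory phases of the form $\lambda(t \pm |x| \pm |y|)$ with $(|x|\,|y|)^{-1}$ prefactors. Away from the stationary sets $t = \pm|x| \pm |y|$, non-stationary phase in $\lambda$ (with $\lambda \asymp 1/h$ on the support of $\varphi(h\,\cdot)$) gives $O(h^N)$ decay; near the stationary set, a one-dimensional stationary-phase expansion in $\lambda$ produces a factor $|t|^{-1/2}$, and a second $|t|^{-1/2}$ arises from summation/integration over the angular Mie modes, combining to give the full $|t|^{-1}$ decay with all $h$-losses absorbed into the $h^{-5}$ factor.

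The main technical obstacle is the uniform control of the Mie series in the far-field regime $|x|,|y| \gg \rho$: one needs angular bounds on the eigenfunction kernel that are independent of $|x|$ and $|y|$, so that the final estimate has no residual spatial dependence. This is what replaces the crude factor $R/|t|$ appearing in Theorem~\ref{spheretheorem2} by the sharp $1/|t|$ of the corollary, and it is the place where the divergence-free compactly supported hypothesis on the initial data is used to rule out spurious low-frequency contributions to the Mie sum.
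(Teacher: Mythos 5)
Your route is genuinely different from the paper's, and the far-field step in your sketch has a real gap. The paper does \emph{not} attempt a direct pointwise kernel bound or a stationary-phase analysis of the eigenfunction expansion at large $|x|,|y|$. Instead it decomposes the data $f = f_1 + f_2$ with $f_1$ supported in $M_{3\rho}$ and $f_2$ supported in $B_{3\rho}^c$, and treats three regimes by reduction to known results: (i) for $f_1$ and $t\le 6\rho$, Theorem~\ref{spheretheorem2} with $3R = 9\rho$ applies; (ii) for $f_1$ and $t > 6\rho$, Melrose's exponential local energy decay theorem (Theorem~1.6 of \cite{melrosescattering}, adapted to Maxwell by taking $\mathrm{curl}(x\,u(t,x))$ for $u$ a scalar wave solution) shows the obstacle propagator differs from the free one only by an exponentially small remainder; (iii) for $f_2$, the Lax--Phillips Huygens argument shows $\cos(t\sqrt{\Delta_{\mathrm{free}}})$ and $\cos(t\sqrt{\Delta_{\rel}})$ agree on such data for $t<2\rho$, and the non-trapping geometry bootstraps this via the other scenarios. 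Nothing in the paper's argument touches the eigenfunction expansion beyond Theorem~\ref{spheretheorem2} itself; the $1/|t|$ decay in the corollary is inherited from the free propagator, not extracted by a fresh oscillatory-integral analysis.

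Your far-field step is the place where your sketch falls short of a proof. You assert that stationary phase in $\lambda$ produces $|t|^{-1/2}$ and that ``a second $|t|^{-1/2}$ arises from summation/integration over the angular Mie modes,'' but this second factor is exactly the hard part, and no mechanism is given. The spectral density $e_\lambda(x,y)$ is a double sum over $(\ell,m)$ weighted by the Mie coefficients $A_\lambda^t$ and products of Hankel functions; controlling this sum uniformly in $|x|,|y|$ while extracting decay in $t$ is at least as delicate as the proof of Theorem~\ref{spheretheorem2} itself (which required Lemma~\ref{matrixbound} and the splitting $\ell \lessgtr we$), and is not covered by the bounds already established in the paper, which are explicitly allowed to depend on $R$. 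You would essentially need a far-field refinement of Lemma~\ref{matrixbound}, and it is not obvious one exists without more precise Hankel asymptotics than the paper uses — the authors themselves note that ``the lack of precise asymptotics for the Hankel function derivatives'' is the impediment to sharper bounds. Also, the remark that the divergence-free hypothesis ``rule[s] out spurious low-frequency contributions to the Mie sum'' is misplaced: it is the cutoff $\varphi(h D_t)$ that removes low frequencies; the divergence-free condition is what keeps you in the subspace where the eigenfunction expansion of Proposition~\ref{StoneFY} is valid. In short, your opening diagnosis of why a naive $t$-dependent choice of $R$ fails is correct, and the near-obstacle reduction is fine, but the far-field oscillatory-integral program is stated rather than carried out, and the paper sidesteps it entirely by finite speed of propagation together with Melrose and Lax--Phillips.
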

The goal of this paper is to analyze the mechanism behind the Maxwell scattering versus wave scattering for the exterior of a single ball. The basic theory of Mie scattering to cope with relative boundary conditions is in Section \ref{ballscatter}. Here we also develop generalized eigenfunctions for Maxwell's equations which have a specific form and are exact representations of the solution $E$ to \eqref{esystem} for the exterior of a ball. The later section, Section \ref{laterproofs} proves Theorem \ref{spheretheorem2}, and Section \ref{finitespeed} improves the result to a global one by presenting the proof of Corollary \ref{indepcor}, which is built on the works of \cite{melroseT, melroseT2, melroseT3} and \cite{LP}. The results have not yet been shown to be sharp in terms of $R$ and powers of $h$, but the major proof contribution is that the associated propagator can be compared with the free space propagator to give a generic bound in terms of powers of $h$ and $R$. This comparison could be used in future work in the acoustic scattering case as well where in higher dimensions it is not yet known what the dispersive estimate is for a ball in dimensions higher than 3. 

\section{Scattering estimates in the case of the ball}\label{ballscatter}

\subsection{Generalized Eigenfunctions for Maxwell's equations}
For Maxwell's equations we are only using the case $d=3$. For these equations there is a special orthonormal basis of spherical harmonics. These allow us to calculate scattering matrices more easily than those from \cite{OS} as in this basis the scattering matrix is diagonal in the exterior of a sphere. Let $\ell\in \mathbb{N}$ and $|m|\leq \ell$. We start by defining the ordinary scalar harmonics
\begin{align}
Y_{\ell m}(\theta,\phi)=\sqrt{\frac{\epsilon_{m}}{2\pi}}\sqrt{\frac{2\ell+1}{2}\frac{(\ell-|m|)!}{(\ell+|m|)!}}P_{\ell}^{|m|}(\cos\theta)\begin{cases}\cos m \phi \\ \sin m\phi \end{cases}
\end{align}
where the Neumann factor $\epsilon_m$ is defined as
\begin{align}
\epsilon_0=1 \quad \epsilon_m=2 \quad m>0.
\end{align} 
The Legendre functions are reviewed in the Appendix, equation \eqref{legendre}, and the text below it. 

For our calculations and representation theorems to hold with vector spherical harmonics we would have to construct new generalized eigenfunctions for Maxwell's equations which are different from the generalized eigenfunctions constructed for the $p$-form Laplacian in \cite{OS}. However we have to make some significant changes to definitions of the generalized eigenfunctions because the vector spherical harmonics used in \cite{OS, RBM} do not solve the same equations that the harmonic polynomials on the sphere do. 

We start with the definition of the vector spherical harmonics, sometimes abbreviated VSH in the literature. Let $x\in \mathbb{R}^3$ so that $x=r\hat{r}$ where $\hat{r}=(\sin\theta \cos\phi, \sin\theta \sin\phi, \cos\theta)$ and $r>0$ is a scalar. Now we can define the vector spherical harmonics on the surface $\{x\in\mathbb{R}^3: |x|=r\}$ as follows
\begin{align}
\Psi_{1,\ell,m}=\frac{1}{\sqrt{\ell(\ell+1)}}\nabla Y_{\ell m}\times x \qquad \Psi_{2,\ell,m}=\frac{r}{\sqrt{\ell(\ell+1)}}\nabla Y_{\ell m} \qquad \Psi_{3,\ell,m}=\hat{r}Y_{\ell m}.
\end{align}
We re-label them simply as $Y_{\mu}$ when we restrict to the sphere $r=1$- this is elaborated on in the Appendix subsection \ref{VSH}.  The index $\mu$ is then $(j,\ell,m)$ where $j=1,2,3$, $\ell\in \mathbb{N}$ and $|m|\leq \ell$. These VSH form an orthonormal basis for the space $L^2(\mathbb{S}^{2},\mathbb{C}^3)$. Let $\mathcal{H}^1_{\rel}(\mathbb{S}^2)$ denote the space of $L^2(\mathbb{S}^2,\mathbb{C}^3)$ harmonic vector fields. We note that the the $\Psi_{1,\ell,m}$ are divergence free on the sphere, the $\Psi_{2,\ell,m}$ are curl free on the sphere and $\Psi_{3,\ell,m}\in \mathcal{H}^1_{rel}(\mathbb{S}^2)$, corresponding to the Helmholtz decomposition theorem (We have restated this as equation \eqref{L2decomp} from Theorem 4.23 in \cite{KH}). Notice that $\Psi_{1,\ell,m}$ is harmonic of degree $\ell$, whereas $\Psi_{2,\ell,m}$ and $\Psi_{3,\ell,m}$ are linear combinations of spherical harmonics of degree $\ell-1$ and $\ell+1$. This will become important for later computations. We supress the subscript $d$ where it is understood as when we refer to the VSH $d=3$. We define the space $L^2_t(\mathbb{S}^2,\mathbb{C}^3)$ as 
\begin{align}
L^2_t(\mathbb{S}^2,\mathbb{C}^3)=\{ \nu \cdot f=0\,\,\,\mathrm{on}\,\,\, \mathbb{S}^2\}. 
\end{align} 
Then we further have $\Psi_{j,\ell,m}$ with $j=1,2$ form an orthonormal basis for the subspace $L^2_t(\mathbb{S}^2,\mathbb{C}^3)$, due to the Helmholtz decomposition theorem. These properties of the $\Psi_{j,\ell,m}$ are reviewed in Theorem 2.46 and Corollary 2.47 of \cite{KH}.

A main observation of this paper is to establish the relation between Mie scattering and spectral representations for operators governing the evolution of Maxwell equations. When we consider perfectly conducting boundary conditions, we will be able to reconstruct a version of Stone's theorem in this coordinate system. Mie scattering has a long history of successful use in modelling electromagnetic fields outside of obstacles starting with the work of Gustav Mie \cite{mie}. 
We start with the free space problem relationship to the Helmholtz equation. We see that
\begin{lem}[Lemma 2.42 in \cite{KH}]\label{KHlemma}
Let $\lambda\ne 0$. If $u(x)$ is a scalar solution to the Helmholtz equation in $\mathbb{R}^3$, that is  $\Delta u-\lambda^2u=0$, then the pairs $(E_1,H_1)$ and $(E_2,H_2)$ where 
\begin{align*}
E_1(x)=\mathrm{curl}(xu(x))\quad  \mathrm{and}\quad
H_1(x)=\frac{1}{\rmi\lambda}\mathrm{curl}E_1(x)
\end{align*}
and 
\begin{align*}
E_2(x)=\mathrm{curl}\,\mathrm{curl}(xu(x)) \quad \mathrm{and} \quad
H_2(x)=\frac{1}{\rmi\lambda}\mathrm{curl}E_2(x)
\end{align*}
are solutions to the time-harmonic Maxwell system \eqref{system} whenever $\mathcal{O}=\emptyset$, on open subsets $D$ of $\mathbb{R}^3$.
\end{lem}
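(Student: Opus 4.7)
The plan is to reduce the time-harmonic Maxwell system to a vector Helmholtz equation for $E$, and then verify by direct computation that both $E_1$ and $E_2$ satisfy it. Specifically, given that $H_j = \frac{1}{\rmi\lambda}\mathrm{curl}\,E_j$ is imposed by definition, the first Maxwell equation $\mathrm{curl}\,E - \rmi\lambda H = 0$ holds tautologically. The remaining equations reduce to: (i) $\mathrm{div}\,E_j = 0$ in $D$, (ii) $\mathrm{div}\,H_j = 0$ in $D$, and (iii) $\mathrm{curl}\,\mathrm{curl}\,E_j = \lambda^2 E_j$ (which is equivalent to $\mathrm{curl}\,H_j + \rmi\lambda E_j = 0$ upon multiplying through by $\rmi\lambda$). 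Note (ii) follows immediately from (iii) or alternatively from $\mathrm{div}\,\mathrm{curl} = 0$.

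For $E_1 = \mathrm{curl}(xu)$, the condition $\mathrm{div}\,E_1 = 0$ is automatic since divergence annihilates curl. For the vector Helmholtz identity, I will use the standard formula $\mathrm{curl}\,\mathrm{curl} = -\Delta_{\mathrm{an}} + \nabla \mathrm{div}$, where $\Delta_{\mathrm{an}}$ is the componentwise (analyst's) Laplacian; since $\mathrm{div}\,E_1 = 0$, the problem becomes showing $\Delta_{\mathrm{an}} E_1 = -\lambda^2 E_1$. To do this I compute componentwise: writing $E_1 = \nabla u \times x$ via $\epsilon_{ijk}\partial_j(x_k u) = \epsilon_{ijk} x_k \partial_j u$, and applying the Leibniz rule $\Delta_{\mathrm{an}}(fg) = (\Delta_{\mathrm{an}}f)g + 2\nabla f\cdot\nabla g + f\Delta_{\mathrm{an}}g$ together with $\Delta_{\mathrm{an}} x_k = 0$, the cross term $2\epsilon_{ijk}\partial_k\partial_j u$ vanishes by symmetry/antisymmetry. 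This leaves $\Delta_{\mathrm{an}}(E_1)_i = \epsilon_{ijk}(\partial_j \Delta_{\mathrm{an}} u)x_k = (\nabla(\Delta_{\mathrm{an}}u))\times x$. Using the paper's sign convention, $\Delta u = \lambda^2 u$ means $\Delta_{\mathrm{an}} u = -\lambda^2 u$, hence $\Delta_{\mathrm{an}} E_1 = -\lambda^2 E_1$, yielding $\mathrm{curl}\,\mathrm{curl}\,E_1 = \lambda^2 E_1$ as required.

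For $E_2 = \mathrm{curl}\,\mathrm{curl}(xu) = \mathrm{curl}\,E_1$, the divergence condition is again immediate. The key identity $\mathrm{curl}\,\mathrm{curl}\,E_2 = \lambda^2 E_2$ follows by applying $\mathrm{curl}$ to the identity already established for $E_1$: since $\mathrm{curl}$ is linear and commutes trivially with scalar multiplication, $\mathrm{curl}\,\mathrm{curl}\,E_2 = \mathrm{curl}(\mathrm{curl}\,\mathrm{curl}\,E_1) = \mathrm{curl}(\lambda^2 E_1) = \lambda^2 E_2$. Thus both cases reduce to the single computation for $E_1$.

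The main (indeed essentially only) obstacle is the componentwise Laplacian computation and its correct interpretation under the paper's positive-spectrum sign convention — all the other Maxwell equations are either tautologies from the definition of $H_j$ or instances of $\mathrm{div}\,\mathrm{curl} = 0$. Care must also be taken that these identities are understood in the classical pointwise sense (so that the Leibniz rule and commutation of curl with Laplacian are valid), which is fine here since $u$ is assumed to be a classical scalar solution of the Helmholtz equation on the open set $D$.
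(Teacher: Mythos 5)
Your proof is correct. A small but important clarification: the paper itself gives no proof of Lemma~\ref{KHlemma} --- it is quoted verbatim from \cite{KH} --- so there is no ``paper's proof'' to compare against directly. The nearest thing in the paper is its sketch of the proof of Theorem~\ref{K2.43}, which verifies the same Maxwell system for the specific choice $u = j_\ell(\lambda r)Y_{\ell m}$. There the paper proves the $E_2$ case by applying the identity $\mathrm{curl}\,\mathrm{curl} = \Delta + \nabla\mathrm{div}$ directly to $xu$ and then grinding out $\nabla\mathrm{div}(xu)$ and $\Delta(xu)$. Your route is cleaner in two respects: (i) you apply the identity to $E_1 = \mathrm{curl}(xu)$ rather than to $xu$, which kills the $\nabla\mathrm{div}$ term outright since $\mathrm{div}\,E_1 = 0$, reducing the work to a componentwise Laplacian of $\nabla u \times x$; and (ii) for $E_2$ you observe that $E_2 = \mathrm{curl}\,E_1$ and simply apply $\mathrm{curl}$ to the identity $\mathrm{curl}\,\mathrm{curl}\,E_1 = \lambda^2 E_1$ already established, avoiding a second Laplacian computation entirely. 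Your treatment of the sign convention --- translating the paper's positive-spectrum $\Delta u - \lambda^2 u = 0$ into $\Delta_{\mathrm{an}} u = -\lambda^2 u$ before applying $\mathrm{curl}\,\mathrm{curl} = -\Delta_{\mathrm{an}} + \nabla\mathrm{div}$ --- is careful and correct, and the cancellation of the cross term $2\epsilon_{ijk}\partial_k\partial_j u$ by antisymmetry is exactly the right observation. One minor point worth spelling out if this were to be written in full: the second ``$\mathrm{div}\,E = 0$'' appearing in the displayed system \eqref{system} is evidently a typo for $\mathrm{div}\,H = 0$, which your argument handles correctly via $\mathrm{div}\,\mathrm{curl} = 0$, but a reader comparing your reduction against the system as printed might be momentarily confused.
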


Indeed, when there is no obstacle, Bessel functions can be used in this way to construct a solution to the time-harmonic Maxwell system \eqref{system}. The $'$ notation refers to the $z$ derivative. This is used in the Maxwell solution formulas for space reasons. This is taken from \cite{KH}, and a similar formula is found in \cite{scat}.  
\begin{theorem}[Theorems 2.43 in \cite{KH}] \label{K2.43}
Let $x=r\hat{r}$. The vector field $E_1: \mathbb{R}^3\rightarrow \mathbb{R}^3$ defined by 
\begin{align*}
E_1(x)=\sqrt{\ell(\ell+1)}\mathrm{curl}(xj_{\ell}(\lambda r)Y_{\ell m})=j_{\ell}(\lambda r)\Psi_{1,\ell,m}
\end{align*}
and the corresponding vector field 
\begin{align*}
H_1(x)=\frac{\mathrm{curl}E(x)}{\rmi\lambda}=\frac{\ell(\ell+1)}{\rmi\lambda r}j_{\ell}(\lambda r)\Psi_{3,\ell,m}+\frac{\sqrt{\ell(\ell+1)}}{\rmi\lambda r}(zj_{\ell}(z))'|_{z=\lambda r}\Psi_{2,\ell,m}
\end{align*}
solve the corresponding Maxwell system \eqref{system}.\\ Analogously the vector fields 
\begin{align*}
&E_2(x)=\mathrm{curl}\,\mathrm{curl}(xj_{\ell}(\lambda r)Y_{\ell m})=\\& \ell(\ell+1)\frac{j_{\ell}(\lambda r)}{r}\Psi_{3,\ell,m}+\sqrt{\ell(\ell+1)}\frac{(zj_{\ell}(z))|_{z=\lambda r}'}{r}\Psi_{2,\ell,m}
\end{align*}
 and $$H_2(x)=\frac{\mathrm{curl}E(x)}{\rmi\lambda}$$ also solve the Maxwell system \eqref{system}. 
\end{theorem}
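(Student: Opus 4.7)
The plan is to invoke Lemma \ref{KHlemma}, which reduces the task to two subproblems: first, verifying that $u(x) = j_\ell(\lambda r) Y_{\ell m}(\theta,\phi)$ solves the scalar Helmholtz equation $\Delta u - \lambda^2 u = 0$; and second, reducing the expressions $\mathrm{curl}(xu)$ and $\mathrm{curl}\,\mathrm{curl}(xu)$ to the stated linear combinations of vector spherical harmonics. The first step is standard separation of variables: with the positive-spectrum convention, so that $\Delta = -\frac{1}{r^2}\partial_r(r^2\partial_r) + \frac{1}{r^2}\Delta_{S^2}$ and $\Delta_{S^2}Y_{\ell m} = \ell(\ell+1)Y_{\ell m}$, the Helmholtz equation for $u$ collapses to the spherical Bessel equation for $j_\ell(\lambda r)$, which is satisfied by construction.

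For the formula for $E_1$, I would first apply the identity $\mathrm{curl}(xu) = \nabla u \times x$, which comes from $\mathrm{curl}(fA) = \nabla f \times A + f\,\mathrm{curl}\,A$ together with $\mathrm{curl}\,x = 0$. Decomposing $\nabla u$ into its radial and tangential pieces, the radial contribution $\lambda j_\ell'(\lambda r)Y_{\ell m}\hat{r}$ vanishes after taking the cross product with $x = r\hat{r}$, and what remains is $j_\ell(\lambda r)\,\nabla Y_{\ell m} \times x$, which matches the definition of $\Psi_{1,\ell,m}$ up to the overall normalization prescribed in the statement.

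For $E_2 = \mathrm{curl}\,\mathrm{curl}(xu)$, I would apply the standard vector identity $\mathrm{curl}\,\mathrm{curl} = \nabla\mathrm{div} - \Delta$ to $xu$, computing each piece separately. A direct calculation gives $\mathrm{div}(xu) = 3u + r\partial_r u$ and the componentwise formula $\Delta(xu) = \lambda^2 xu - 2\nabla u$, where Helmholtz for $u$ and $\Delta x_i = 0$ have been used. Substituting, expanding $\nabla$ in spherical coordinates, and regrouping the radial and tangential parts, the $\lambda^2 xu$ contributions cancel, and what remains consolidates, using the spherical Bessel equation to eliminate second-order radial derivatives of $j_\ell(\lambda r)$, into the advertised combination with coefficients $\ell(\ell+1)j_\ell(\lambda r)/r$ on $\Psi_{3,\ell,m}$ and $\sqrt{\ell(\ell+1)}(zj_\ell(z))'|_{z=\lambda r}/r$ on $\Psi_{2,\ell,m}$.

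The magnetic fields $H_j = \mathrm{curl}\,E_j/(\mathrm{i}\lambda)$ then follow from one more application of the spherical-coordinate curl formulas. For $H_1$ one computes $\mathrm{curl}(j_\ell(\lambda r)\Psi_{1,\ell,m})$ directly, which naturally decomposes into a radial piece (proportional to $\Psi_{3,\ell,m}$) and a tangential piece (proportional to $\Psi_{2,\ell,m}$); for $H_2$, the Maxwell identity $\mathrm{curl}\,H_2 = -\mathrm{i}\lambda E_2$ is immediate from $\mathrm{div}\,E_2 = 0$ (since $E_2$ is itself a curl) combined with $\Delta E_2 = \lambda^2 E_2$, the latter following from Helmholtz for $u$ applied componentwise and the commutation of $\mathrm{curl}\,\mathrm{curl}$ with $\Delta$. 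The main technical obstacle is purely one of bookkeeping: tracking the $\sqrt{\ell(\ell+1)}$ normalization constants and the transitions between the scalar factors $j_\ell(\lambda r)/r$ and $(zj_\ell(z))'|_{z=\lambda r}/r$ generated by successive applications of $\mathrm{curl}$, both handled systematically via the identity $\frac{1}{r^2}\partial_r(r^2 f(r)) = f'(r) + 2f(r)/r$ combined with the spherical Bessel equation.
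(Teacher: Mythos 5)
Your proof takes essentially the same route as the paper: reduce to Lemma \ref{KHlemma} via separation of variables, compute $\mathrm{curl}(xu)=\nabla u\times x$ with the radial-plus-tangential decomposition of $\nabla u$, and handle $E_2$ by the double-curl identity together with Bessel's equation \eqref{eqdef}. The one slip is the sign in the vector identity: in the paper's positive-spectrum convention the correct form is $\mathrm{curl}\,\mathrm{curl}=\nabla\mathrm{div}+\Delta$ (as the paper writes), not $\nabla\mathrm{div}-\Delta$; your subsequent formula $\Delta(xu)=\lambda^2 xu-2\nabla u$ is the positive-spectrum one, so with the sign you wrote the $\lambda^2 xu$ terms you claim cancel would in fact not cancel. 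Fixing that sign makes the cancellation work exactly as you describe, and the rest of the outline (including the $H_j$ computations) is sound.
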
 
\begin{proof}
We sketch the proof of the first part directly from \cite{KH} only for completeness. Using the fact that $\mathrm{curl}x=0$ and $$\mathrm{curl}(\lambda F)=(\nabla \lambda)\times F+\lambda\mathrm{curl}F$$ for $F$ and $\lambda$ scalar functions, we have that 
\begin{align*}
\mathrm{curl}(xj_{\ell}(\lambda r)Y_{\ell m})&= \nabla(j_{\ell}(\lambda r)Y_{\ell m})\times (r\hat{r})\\
& =\left(\frac{1}{r}j_{\ell}(\lambda r)\mathrm{Grad}_{\mathbb{S}^2}Y_{\ell m}+\frac{\partial}{\partial r}\left(j_{\ell}(\lambda r)\right) Y_{\ell m} \hat{r}\right)\times x. 
\end{align*}
This gives the first part. For the second part we have that $\mathrm{curl}\mathrm{curl}=\Delta+\nabla \mathrm{div}$ and the fact that $w=j_{\ell}(\lambda r)Y_{\ell m}$ solves the Helmholtz equation. We see that 
\begin{equation*}
\mathrm{curl}\,\mathrm{curl}(xw(x))= 
\Delta(xw(x))+\nabla \mathrm{div}(xw(x))=-2\nabla w(x)+\lambda^2 xw(x)+3\nabla w(x)+\nabla(x\cdot\nabla w(x))
\end{equation*}
which when all the derivatives are written out can be simplified using Bessel's equation \eqref{eqdef} and gives the desired result. Complete details are in \cite{KH}. 
\end{proof}  
We would like to show the following analogue of Proposition 1.2 in \cite{RBM} is applicable in three dimensions where $\Phi$ there which represents a vector of harmonic polynomials on the sphere is replaced by $Y$, a vector written in the basis of VSH. 

Suppose that $f,g: Z\rightarrow W$ are functions that take values in a locally convex topological vector spaces and $h:Z\rightarrow \mathbb{R}$. As usual we write $f=g+O_W(h)$ if for every continuous semi-norm p on $W$ there exists a constant $C_p$ such that $p(f(z)-g(z))\leq C_p|h(z)|$ for all $z\in Z$. 
 \begin{prop}[Analogue Proposition 1.2 in \cite{RBM}]\label{1.2}
For fixed $\lambda>0$ the generalised eigenfunctions $E_{\lambda}(Y)$ are completely determined by the following
\begin{enumerate}
\item $(\Delta_{\rel}- \lambda^2 ) E_{\lambda}(Y) =0$,
\item $\chi E_{\lambda}\in \mathrm{dom}(\Delta_{\rel})$ for any $\chi \in C^\infty_0(M)$ with $\nabla\chi=0$ near $\partial \mathcal{O}$,
\item The asymptotic expansion
$$
 E_{\lambda}(Y) = \frac{\ee^{-\rmi \lambda r} \ee^{\frac{i\pi}{2}} }{r} Y + \frac{\ee^{\rmi \lambda r} \ee^{-\frac{i\pi}{2}} }{r} \Psi_\lambda + O\left(\frac{1}{r^{2}}\right),  \quad \textrm{for} \,\,\,r \to \infty.
$$
for some $\Psi_\lambda \in C^\infty(\mathbb{S}^2;\C^3)$.
\end{enumerate}
\end{prop}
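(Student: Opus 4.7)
The statement is a uniqueness characterization, so the plan is to suppose that two candidates $E_\lambda^{(1)}(Y)$ and $E_\lambda^{(2)}(Y)$ each satisfy (1)--(3) with the same incoming profile $Y$ and to show that $F := E_\lambda^{(1)}(Y) - E_\lambda^{(2)}(Y)$ vanishes identically on $M$. The hypotheses give at once that $F$ solves $(\Delta_{\rel}-\lambda^{2})F = 0$, that $\chi F \in \mathrm{dom}(\Delta_{\rel})$ for any $\chi\in C_0^\infty(M)$ constant near $\partial\mathcal{O}$, and, because the incoming $\ee^{-\rmi\lambda r}$ contributions cancel, that $F$ admits the purely outgoing expansion
\[
F(x) = \frac{\ee^{\rmi\lambda r}\ee^{-\rmi\pi/2}}{r}\bigl(\Psi_\lambda^{(1)} - \Psi_\lambda^{(2)}\bigr) + O\bigl(r^{-2}\bigr), \qquad r\to\infty.
\]

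The core step is a Rellich-type argument. I would apply Green's second identity to $(F,\overline F)$ on the annulus $M\cap B_R(0)$: the bulk integrand vanishes by the eigenvalue equation, the boundary term on $\partial\mathcal{O}$ vanishes by the relative boundary condition built into condition (2) (via the symmetry of $\Delta_{\rel}$ on its domain, after a suitable cut-off approximation), and the surface integral on $\{|x|=R\}$ can be evaluated asymptotically using the outgoing expansion. Taking the imaginary part and sending $R\to\infty$ reduces the identity to a positive multiple of $\lambda\int_{\mathbb{S}^{2}}\bigl|\Psi_\lambda^{(1)} - \Psi_\lambda^{(2)}\bigr|^{2}dS = 0$, forcing $\Psi_\lambda^{(1)} = \Psi_\lambda^{(2)}$, and therefore $F = O(r^{-2})$ at infinity.

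With both leading asymptotic coefficients eliminated, I would conclude either by the classical Rellich uniqueness theorem, noting that $\int_{|x|=R}|F|^{2}dS = O(R^{-2})\to 0$ implies $F\equiv 0$ for $|x|$ large, or more explicitly by expanding $F$ in the vector spherical harmonic basis from Section \ref{ballscatter}: each angular mode then satisfies a spherical-Bessel ODE whose two independent solutions behave like $r^{-1}$ times oscillations, so the $O(r^{-2})$ decay kills every mode on $\{|x|>\rho\}$. Unique continuation for $\Delta_{\rel}-\lambda^{2}$ then propagates $F\equiv 0$ to all of $M$.

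The main obstacle I anticipate is a careful treatment of the boundary contribution on $\partial\mathcal{O}$ in the vector-valued Green's identity, since $\Delta_{\rel}$ acts with mixed tangential-normal data on divergence-free fields and the symmetry identity requires both arguments to live in $\mathrm{dom}(\Delta_{\rel})$. This has to be justified by approximating $F$ through cut-offs admissible under condition (2), so that the quadratic form associated with $\Delta_{\rel}$ may legitimately be used to transfer derivatives before the limit $R\to\infty$ is taken.
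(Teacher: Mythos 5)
Your argument is correct and is essentially the paper's: the paper also reduces uniqueness to Rellich's theorem (Theorems 4.17 and 4.18 of \cite{DZbook}), asserting the scalar proofs carry over componentwise for the Euclidean vector Laplacian with relative boundary conditions, see the proof of Lemma \ref{long}. You spell out the Green's-identity/imaginary-part computation, the self-adjointness handling of the boundary term on $\partial\mathcal{O}$, and the unique-continuation step that the paper leaves to the citations.
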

As a result in three dimensions $\Psi_\lambda$ is uniquely determined and implicitly defines a linear mapping 
\begin{gather*}
 S_\lambda:  C^\infty(\mathbb{S}^2; \C^3) \to C^\infty(\mathbb{S}^2; \C^3), \quad Y \mapsto \tau \Psi_\lambda,
\end{gather*}
where $$\tau: C^\infty(\mathbb{S}^2;\C^3) \to C^\infty(\mathbb{S}^2;\C^3)$$ is the pull-back of the antipodal map. The map $S_\lambda : C^\infty(\mathbb{S}^2, \C^3) \to C^\infty(\mathbb{S}^2, \C^3)$ is called the scattering matrix,
and $ A_\lambda= S_\lambda - \mathrm{id}$ is called the scattering amplitude. Similar to the Helmholtz decomposition, the scattering matrix is then of the form
\begin{align}\label{decomp}
 S_\lambda = \left( \begin{matrix} S_{n,\lambda} & 0 \\ 0 & S_{t,\lambda} \end{matrix} \right),
\end{align}
if $C^\infty(\mathbb{S}^2,\C^3)$ is decomposed as $C^\infty_n(\mathbb{S}^2,\C^3) \oplus C^\infty_t(\mathbb{S}^2,\C^3)$ into normal and tangential parts. 
If we had this proposition, then this will allow us to define  divergence free generalized eigenfunctions for the electric field in $M$. In order to try to prove this analogue of Proposition 1.2 from \cite{RBM} we use Theorem 2.43 (stated here as Theorem \ref{K2.43}) in \cite{KH}.  We make the definitions
\begin{defn}
For $Y\in L^2_t(\mathbb{S}^2,\C^3)$, let $a_{j,\ell,m}= \langle Y,\Psi_{j,\ell,m}\rangle$, $x=r\hat{r}$ as in Theorem \ref{K2.43} (Theorem 2.43 from \cite{KH})  then we define 
\begin{align}
&\tilde{j}_{\lambda,Y}(Y)= 2\sum \limits_{\ell,m} a_{1,\ell,m}\Psi_{1,\ell,m}\lambda j_{\ell}(\lambda r)(-\rmi)^{\ell} +\\& \nonumber a_{2,\ell,m}\Psi_{2,\ell,m}\frac{\left(z j_{\ell}(z)\right)'}{r}|_{z=\lambda r}(-\rmi)^{\ell-1}+
a_{2,\ell,m}\Psi_{3,\ell,m}\frac{\sqrt{\ell(\ell+1)}j_{\ell}(\lambda r)}{r}(-\rmi)^{\ell-1} \\&=
2\sum\limits_{\ell,m} \left(a_{1,\ell,m}(-\rmi)^{\ell}\mathrm{curl}(xj_{\ell}(\lambda r)Y_{\ell m})+a_{2,\ell,m}(-\rmi)^{\ell-1}\mathrm{curl}\mathrm{curl}(xj_{\ell}(\lambda r)Y_{\ell m})\right)
\end{align}
and also 
\begin{align}
&\tilde{h}^{(2)}_{\lambda,Y}(Y)=\sum _{\ell,m}a_{1,\ell,m}\Psi_{1,\ell,m}\lambda h^{(2)}_{\ell}(\lambda r)(-\rmi)^{\ell} +\\& \nonumber a_{2,\ell,m}\Psi_{2,\ell,m}\frac{\left(zh^{(2)}_{\ell}(z)\right)'}{r}|_{z=\lambda r}(-\rmi)^{\ell-1}+
a_{2,\ell,m}\Psi_{3,\ell,m}\frac{\sqrt{\ell(\ell+1)}h^{(2)}_{\ell}(\lambda r)}{r}(-\rmi)^{\ell-1}=\\&
\sum\limits_{\ell,m} \left(a_{1,\ell,m}(-\rmi)^{\ell}\mathrm{curl}(xh^{(2)}_{\ell}(\lambda r)Y_{\ell m})+a_{2,\ell,m}(-\rmi)^{\ell-1}\mathrm{curl}\mathrm{curl}(xh^{(2)}_{\ell}(\lambda r)Y_{\ell m})\right)
\end{align}
and
\begin{align}
&\tilde{h}^{(1)}_{\lambda,Y}(Y)=\sum _{\ell,m}a_{1,\ell,m}\Psi_{1,\ell,m}\lambda h^{(1)}_{\ell}(\lambda r)(-\rmi)^{\ell} +\\& \nonumber a_{2,\ell,m}\Psi_{2,\ell,m}\frac{\left(zh^{(1)}_{\ell}(z)\right)'}{r}|_{z=\lambda r}(-\rmi)^{\ell-1}+
a_{2,\ell,m}\Psi_{3,\ell,m}\frac{\sqrt{\ell(\ell+1)}h^{(1)}_{\ell}(\lambda r)}{r}(-\rmi)^{\ell-1}=\\&
\sum\limits_{\ell,m}\left(a_{1,\ell,m}(-\rmi)^{\ell}\mathrm{curl}(xh^{(1)}_{\ell}(\lambda r)Y_{\ell m})+a_{2,\ell,m}(-\rmi)^{\ell-1}\mathrm{curl}\mathrm{curl}(xh^{(1)}_{\ell}(\lambda r)Y_{\ell m})\right)
\end{align}
where these sums are defined when they converge in $C^{\infty}(M,\mathbb{C}^3)$.
\end{defn} 
We can then define generalized eigenfunctions using the basis of vector spherical harmonics, $E_{\lambda}(Y)$, in the exterior of $\mathcal{O}$ an open ball of radius $\rho$ which without loss of generality we assume $\rho\geq 1$. We pick a smooth function $\chi\in C^{\infty}(M)$ such that $1-\chi$ is compactly supported. We let $U$ denote the set $B_R(0)\setminus B_{\rho}(0)$ where $B_R(0)$ is a ball of fixed radius $R,$ with $R>\rho$. Then, we start with the following proposition.
\begin{prop}\label{cvim}
 We define an eigenfunction to $\Delta_{\rel}$ on $M$ by setting
\begin{align}
\chi \tilde{j}_{\lambda,Y}(Y)-R_{\lambda}(\Delta_{\rel}-\lambda^2)(\chi \tilde{j}_{\lambda,Y}(Y))=E_{\lambda}(Y)
\end{align}
for any $\chi\in C_0^{\infty}(M)$ and this is divergence free. Moreover for $\lambda\in\mathbb{C}$ with $|\lambda|<C$ where $C$ is a constant we have that on $U$
\begin{align}
E_{\lambda}(Y_{n})=O_{C^{\infty}(\mathbb{C}^3\setminus 0)}\left(\frac{\lambda^{\ell_{n}-1}R^{\ell_{n}}}{\Gamma\left(\ell_{n}+\frac{3}{2}\right)}\right)
\end{align}
so that $E_{\lambda}(Y)\in C^{\infty}_0(M,\mathbb{C}^3)$. 
\end{prop}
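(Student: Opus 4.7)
The plan is to verify the three claims---the eigenvalue equation, divergence-freeness of $E_\lambda(Y)$, and the asymptotic $C^\infty$-bound on the annulus $U$---in succession.

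First I would observe that $\tilde{j}_{\lambda,Y}(Y)$ itself satisfies the vector Helmholtz equation on all of $\R^3$, since by Lemma \ref{KHlemma} and Theorem \ref{K2.43} each of its summands $\mathrm{curl}(x j_\ell(\lambda r) Y_{\ell m})$ and $\mathrm{curl}\,\mathrm{curl}(x j_\ell(\lambda r) Y_{\ell m})$ lies in the kernel of $\Delta - \lambda^2$. Consequently
$$(\Delta_\rel - \lambda^2)(\chi \tilde{j}_{\lambda,Y}(Y)) = [\Delta,\chi]\tilde{j}_{\lambda,Y}(Y),$$
which is supported in the compact annulus $\{\nabla\chi \ne 0\}$; applying $R_\lambda$ and subtracting as in the definition of $E_\lambda(Y)$ yields $(\Delta_\rel-\lambda^2)E_\lambda(Y)=0$, while the cut-off condition analogous to item (2) of Proposition \ref{1.2} is automatic from the construction.

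Second, $\mathrm{div}\,\tilde{j}_{\lambda,Y}(Y)=0$ on $\R^3$ because curl and curl-curl annihilate divergence. The multiplication by $\chi$ introduces a gradient-type defect $\nabla\chi\cdot\tilde{j}_{\lambda,Y}$, and to show this is cancelled by the resolvent correction I would invoke the Helmholtz decomposition \eqref{L2decomp} and the fact that $R_\lambda$ respects it under the relative boundary conditions; alternatively, apply $\mathrm{div}$ to the equation $(\Delta_\rel-\lambda^2)E_\lambda(Y)=0$, use that $\Delta$ commutes with $\mathrm{div}$ on $\R^3$, and invoke a scalar uniqueness argument incorporating the induced boundary condition on $\partial\mathcal{O}$.

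Third, for a single VSH basis element $Y_n$ the series defining $\tilde{j}_{\lambda,Y_n}(Y_n)$ collapses to at most two terms at $\ell=\ell_n$. I would insert the power series
$$j_\ell(z) = \frac{\sqrt{\pi}}{2}\left(\frac{z}{2}\right)^\ell \sum_{k=0}^{\infty} \frac{(-z^2/4)^k}{k!\,\Gamma(\ell+k+\tfrac{3}{2})},$$
together with the corresponding expansion of $(z j_\ell(z))'$, so that on $r\le R$ with $|\lambda|<C$ every quantity entering the definition is controlled in $C^\infty$ by $\Gamma(\ell_n+\tfrac{3}{2})^{-1}$ times an appropriate monomial in $\lambda$ and $R$ of the stated shape. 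The resolvent correction $R_\lambda[\Delta,\chi]\tilde{j}_{\lambda,Y_n}(Y_n)$ has a compactly supported source inheriting this bound, and the mapping properties of $R_\lambda$ recalled from Appendix B of \cite{OS} transfer the estimate to $E_\lambda(Y_n)$ on $U$. Since $\Gamma(\ell_n+\tfrac{3}{2})$ grows faster than any exponential in $\ell_n$, expanding a general $Y\in C^\infty(\mathbb{S}^2,\C^3)$ in VSH and summing yields absolute convergence of the series for $E_\lambda(Y)$ in $C^\infty(M,\C^3)$.

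The hardest step will be the divergence-free claim: the cut-off destroys this property pointwise, and arguing that the resolvent correction restores it requires either verifying carefully that $R_\lambda$ respects the decomposition \eqref{L2decomp} under relative boundary conditions, or a scalar uniqueness argument that handles the induced boundary condition on $\partial\mathcal{O}$. By contrast, the Bessel-function bookkeeping in the third step is essentially mechanical once the series is inserted, but it still requires care to track the precise powers of $\lambda$ and $R$ arising from the $1/r$ and $(z j_\ell(z))'$ prefactors in Theorem \ref{K2.43}.
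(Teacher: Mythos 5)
Your approach is essentially the same as the paper's: show the eigenvalue equation holds by construction, bound the free-space approximant $\tilde{j}_{\lambda,Y}(Y_n)$ via Bessel-function estimates, and transfer the estimate to $E_\lambda(Y_n)$ using the mapping properties of $R_\lambda$. The paper uses the packaged uniform bound $|J_\mu(z)| \leq |z/2|^\mu e^{|\Im z|}/\Gamma(\mu+1)$ together with the derivative identity \eqref{Ab} rather than inserting the power series directly, but these are interchangeable tools. Two points are worth flagging. First, when you transfer the bound through the resolvent you should track the explicit loss coming from $R_\lambda = O(\lambda^{-2})$ as a map $H^s_{\mathrm{comp}}\to H^{s+2}_{\mathrm{loc}}$: the paper bounds $\tilde{j}_{\lambda,Y}(Y_n)$ by $O(\lambda^{\ell_n+1}R^{\ell_n}/\Gamma(\ell_n+\tfrac{3}{2}))$, and precisely this $\lambda^{-2}$ factor is what produces the exponent $\ell_n-1$ in the stated bound for $E_\lambda(Y_n)$ — your sketch doesn't make this bookkeeping visible. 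Second, regarding divergence-freeness, which you rightly identify as the delicate part: the paper's proof actually does not address it at all, so your instinct that it needs an argument (either that $R_\lambda$ intertwines with $\mathrm{div}$ via the scalar Dirichlet resolvent, or a uniqueness argument for the scalar Helmholtz equation satisfied by $\mathrm{div}\,E_\lambda(Y)$) is sound and goes beyond what the paper supplies; but you should commit to one route and carry it through rather than leaving both options open, since as written the proposal leaves exactly the same gap the paper leaves. The paper also bootstraps from an $L^2_{\mathrm{loc}}$ bound on the family $\lambda^{-\ell_n}\tilde{j}_{\lambda,Y}(Y_n)$ to $H^s_{\mathrm{loc}}$ for all $s\in 2\N$ via the eigenvalue equation (elliptic regularity), which you invoke only implicitly through ``mapping properties of $R_\lambda$''; stating this step makes the $C^\infty$-bound on $U$ more transparent.
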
 
\begin{proof}
We have that $E_{\lambda}(Y)$ solves the desired equation by construction. This proof follows similarly to Lemma 2.9 in \cite{OS} and is included for completeness. We have for $\lambda$ with $|\lambda|<C$ that 
\begin{align}
\tilde{j}_{\lambda,Y}(Y_{n})=O_{C^{\infty}(\mathbb{C}^3\setminus 0)}\left(\frac{\lambda^{\ell_{n}+1}R^{\ell_{n}}}{\Gamma\left(\ell_{n}+\frac{3}{2}\right)}\right)
\end{align} 
uniformly in $n$. This follows from the estimate in Appendix \ref{lob}
\begin{align}
|J_{\mu}(z)|\leq \frac{|\frac{z}{2}|^{\mu}e^{|\Im z|}}{\Gamma(\mu+1)} \quad \mu>-\frac{1}{2}
\end{align}
and the derivative identity in the Appendix, \eqref{Ab}. The family $\lambda^{-\ell_{n}}\tilde{j}_{\lambda,Y}(Y_{n})$ is bounded in $L^2_{\loc}(\mathbb{C}^3\setminus 0)$. From Theorem \ref{K2.43} we have that $\Delta_{\rel}\tilde{j}_{\lambda,Y}(Y_{n})=\lambda^2\tilde{j}_{\lambda,Y}(Y_{n})$. This gives that the family is bounded in $H^s_{\loc}(\mathbb{C}^3\setminus 0)$ for $s\in 2\mathbb{N}$. We note that $R_{\lambda}=\mathcal{O}(\lambda^{-2})$ if we consider it as a map $H^s_{\mathrm{com}}(M,\mathbb{R}^3)\rightarrow H_{\mathrm{loc}}^{s+2}(M,\mathbb{R}^3)$. This gives the desired result. 
\end{proof} 
We make the definition
\begin{align}\label{repA}
E_{\lambda}^{\mathrm{curl}}(Y)|_{M}=\tilde{j}_{\lambda,Y}(Y)+\tilde{h}^{(1)}_{\lambda,Y}(A_{\lambda}Y)
\end{align}
for some matrix $A_{\lambda}$. The matrix is designed to match relative boundary conditions, in this case the only missing condition is $\nu\times E_{\lambda}^{\mathrm{curl}}(Y)=0$, as given in Section \ref{miescattering}. Now, we want to relate our divergence free generalized eigenfunction to \eqref{repA}. However before proceeding to establish a relationship to the functional calculus first we need more information about the scattering amplitude. In particular without the computation of $A_{\lambda}$ it is not clear why the representations converge in $C^{\infty}(M,\mathbb{C}^3)$. We let $B_{R_1}(0)$, $B_{R_2}(0)$ denote open balls centered at the origin with radii $R_1$ and $R_2$ respectively, which we abbreviate as $B_1$ and $B_2$. We assume $R_1, R_2$ are sufficiently large that $\mathcal{O}\subset \overline{B_1}$, $B_1\subset B_2$. The ball $B_1$ also must be such that we have $\Delta_{\rel}|_{\mathbb{R}^3\setminus B_1}=\Delta_{\free}|_{\mathbb{R}^3\setminus B_1}$. (This notational setup is adapted from \cite{DZbook}, Setup in Section 4.1 and Definition 4.1.) We claim 
\begin{lem}\label{long}
If $Y\in L^2_t(\mathbb{S}^{2},\mathbb{C}^3)$ then $A_{\lambda}\in C^{\infty}(\mathbb{S}^2,\mathbb{C}^3)$ can be chosen uniquely in \eqref{repA} so that
\begin{enumerate}
\item $\mathrm{div} E_{\lambda}^{\mathrm{curl}}(Y)=0$ in $M$
\item $\nu\times E_{\lambda}^{\mathrm{curl}}(Y) =0$ 
\item $E_{\lambda}^{\mathrm{curl}}(Y)$ solves the equation $(\mathrm{curl}\mathrm{curl}-\lambda^2)u=0$.
\item 
$\varphi E_{\lambda}^{\mathrm{curl}}(Y)\in \mathrm{dom}(\Delta_{\rel})$ for any $\varphi\in C_0^{\infty}(M)$ with $\nabla\varphi=0$ near $\partial\mathcal{O}$. 
 \end{enumerate}
\end{lem}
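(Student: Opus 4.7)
The plan is to diagonalise the representation \eqref{repA} in the VSH basis, read off the scattering coefficients of $A_\lambda Y$ from the boundary condition $\nu\times u=0$ at $r=\rho$, and then verify the remaining three properties together with smoothness and convergence. Expanding $Y=\sum_{\ell,m}\bigl(a_{1,\ell,m}\Psi_{1,\ell,m}+a_{2,\ell,m}\Psi_{2,\ell,m}\bigr)$ (there is no $\Psi_{3,\ell,m}$ component since $Y\in L^2_t$) and noting that $\tilde h^{(1)}_{\lambda,Y}$ by construction depends only on the tangential coefficients of its argument, I look for $A_\lambda Y=\sum_{\ell,m}\bigl(b_{1,\ell,m}\Psi_{1,\ell,m}+b_{2,\ell,m}\Psi_{2,\ell,m}\bigr)$. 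Separating the $\Psi_{1,\ell,m}$, $\Psi_{2,\ell,m}$, $\Psi_{3,\ell,m}$ components at $r=\rho$ using the definitions of $\tilde j_{\lambda,Y}$ and $\tilde h^{(1)}_{\lambda,Y}$, the condition $\nu\times E_\lambda^{\mathrm{curl}}(Y)=0$ reduces to the vanishing of the two tangential coefficients in each mode, giving a decoupled system whose unique solution is
$$
b_{1,\ell,m}=-\,2a_{1,\ell,m}\,\frac{j_\ell(\lambda\rho)}{h^{(1)}_\ell(\lambda\rho)},\qquad
b_{2,\ell,m}=-\,2a_{2,\ell,m}\,\frac{\bigl(zj_\ell(z)\bigr)'\big|_{z=\lambda\rho}}{\bigl(zh^{(1)}_\ell(z)\bigr)'\big|_{z=\lambda\rho}}.
$$
For real $\lambda>0$ the denominators do not vanish: $h^{(1)}_\ell=j_\ell+\rmi y_\ell$ is nonzero on $(0,\infty)$ because $j_\ell$ and $y_\ell$ are real-valued with no common zero, and the constant Wronskian of $zj_\ell(z)$ and $zy_\ell(z)$ gives the same for $(zh^{(1)}_\ell)'$.

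The main obstacle is showing that these formulas define $A_\lambda Y\in C^\infty(\mathbb{S}^2,\C^3)$ and, more importantly, that the scattered series $\tilde h^{(1)}_{\lambda,Y}(A_\lambda Y)$ converges in $C^\infty(M,\C^3)$. For $Y$ smooth the coefficients $a_{j,\ell,m}$ decay faster than any polynomial in $\ell$. For fixed $\lambda\rho$ the standard large-order asymptotics combined with the Bessel bound and the derivative identity \eqref{Ab} from the Appendix used in Proposition~\ref{cvim} yield super-algebraic decay in $\ell$ of both $|j_\ell(\lambda\rho)/h^{(1)}_\ell(\lambda\rho)|$ and of the corresponding derivative ratio, so $b_{j,\ell,m}$ decay rapidly and $A_\lambda Y\in C^\infty(\mathbb{S}^2,\C^3)$. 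On any compact subset $K\subset\overline M$ with $r\geq\rho$, uniform bounds on $h^{(1)}_\ell(\lambda r)$ and its $r$-derivatives (of order at most $\Gamma(\ell+\tfrac12)(\lambda r)^{-\ell-1}$ up to constants and powers of $\lambda$) compete against the rapid decay of $b_{j,\ell,m}$ and win at each fixed order of differentiation, giving $C^\infty$ convergence on $K$. The incoming part $\tilde j_{\lambda,Y}(Y)$ is handled directly by Proposition~\ref{cvim}, and uniqueness of $A_\lambda$ is built in by the orthogonality of the VSH basis.

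The four listed properties then follow quickly. Property~(3) holds because each summand of $\tilde j_{\lambda,Y}(Y)$ and $\tilde h^{(1)}_{\lambda,Y}(A_\lambda Y)$ is of the form $\mathrm{curl}(xw)$ or $\mathrm{curl}\,\mathrm{curl}(xw)$ with $w$ a scalar Helmholtz solution, and so satisfies $(\mathrm{curl}\,\mathrm{curl}-\lambda^2)u=0$ by Lemma~\ref{KHlemma} and Theorem~\ref{K2.43}; the $C^\infty$ convergence lets this be taken term by term. Property~(1) is then an immediate consequence of property~(3), since taking divergences and using $\mathrm{div}\,\mathrm{curl}=0$ forces $\lambda^2\,\mathrm{div}\,E_\lambda^{\mathrm{curl}}(Y)=0$. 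Property~(2) is by construction exactly the equation defining $A_\lambda$. Finally, for property~(4), since $\nabla\varphi=0$ in a neighbourhood of $\partial\mathcal{O}$ the cut-off field $\varphi E_\lambda^{\mathrm{curl}}(Y)$ satisfies $\nu\times\bigl(\varphi E_\lambda^{\mathrm{curl}}(Y)\bigr)=0$ on $\partial\mathcal{O}$, is divergence free near $\partial\mathcal{O}$, and is smoothly and compactly supported away from the boundary; combined with the quadratic-form characterisation of $\mathrm{dom}(\Delta_\rel)$ recalled in Section~\ref{sec:1.1}, this places it in $\mathrm{dom}(\Delta_\rel)$.
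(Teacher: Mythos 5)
Your proof is correct but takes a genuinely different route from the paper's. The paper establishes existence and uniqueness of $A_\lambda$ abstractly: it takes the resolvent‑constructed eigenfunction $E_\lambda(Y)$ from Proposition~\ref{cvim}, shows $g = E_\lambda(Y)-\tilde j_{\lambda,Y}(Y)$ is outgoing, invokes Appendix~\ref{rellich} (Proposition~\ref{OSprop}, Lemma~\ref{replacebase}, Rellich's theorem via Theorems 4.17/4.18 of \cite{DZbook}) to write $g=\tilde h^{(1)}_\lambda(A_\lambda Y)$ uniquely, matches asymptotic expansions \eqref{eqnE1}--\eqref{eqnE2}, and explicitly defers the computation of $A_\lambda$ to Lemma~\ref{maxgen} in the Mie scattering subsection. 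You instead solve for $A_\lambda$ directly by imposing $\nu\times u=0$ at $r=\rho$ mode‑by‑mode in the VSH basis (thereby reproducing the formulas \eqref{Atan}--\eqref{Anormal} inside the proof), settle non‑vanishing of the denominators by a Wronskian argument, and prove $C^\infty$ convergence from the superalgebraic decay of $b_{j,\ell,m}$ against the large‑order growth of $h^{(1)}_\ell(\lambda r)$. Your uniqueness is then the elementary orthogonality of the VSH, not Rellich. Both routes are valid. Yours is more concrete, self‑contained, and gives the explicit $A_\lambda$ immediately (which the paper needs anyway one lemma later); it also has a slicker derivation of property~(1) from property~(3) using $\lambda\neq0$ and $\mathrm{div}\,\mathrm{curl}=0$. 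The paper's abstract route buys the identification of $E^{\mathrm{curl}}_\lambda(Y)$ with the spectral‑theoretically defined $E_\lambda(Y)$ (formalised as Lemma~\ref{equalityofeigenfunctions}, which is needed for Stone's theorem), and would generalise to obstacles for which no closed‑form Mie solution exists. One small remark: your convergence estimate can be streamlined by noting from equation \eqref{lob1} that $|h^{(1)}_\ell(\lambda r)/h^{(1)}_\ell(\lambda\rho)|\leq1$ for $r\geq\rho$, so the terms of $\tilde h^{(1)}_\lambda(A_\lambda Y)$ are dominated by $|a_{j,\ell,m}\,j_\ell(\lambda\rho)|$ directly, avoiding the explicit large‑order comparison.
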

\begin{proof}
The third and fourth point follow by the definition of the explicit functions involved. Then for any $B_1$, it is possible to select a $\chi\in C_0^{\infty}(M)$ as in the definition of $E_{\lambda}(Y)$ such that if we define 
$$g=E_{\lambda}(Y)-\tilde{j}_{\lambda,Y}(Y)$$ then $g$ is outgoing on $\mathbb{R}^3\setminus B_2$ for $\lambda\neq 0$ and smooth, by definition (c.f Appendix \ref{rellich} for the definition of outgoing). It follows from  Appendix \ref{rellich} that we have $g=\tilde{h}_{\lambda}^{(1)}(A_{\lambda}(Y))$ for a unique $A_{\lambda}(Y)\in C^{\infty}(\mathbb{S}^2,\mathbb{C}^3)$ on $\mathbb{R}^3\setminus B_1$. Uniqueness follows by the asymptotic expansions, as a result of Appendix \ref{rellich}. Indeed we have that if $Y\in L^2_t(\mathbb{S}^2,\mathbb{C}^3)$ is smooth then we have 
\begin{align}
\label{eqnE1}
E_{\lambda}(Y)=\frac{e^{-\rmi\lambda r}e^{\frac{\rmi\pi}{2}}}{r}Y+\frac{e^{\rmi\lambda r}e^{-\frac{\rmi\pi}{2}}}{r}(\tau(Y)+\tau(A_{\lambda}Y))+O\left(\frac{1}{r^2}\right) \quad r\rightarrow \infty
\end{align} 
where $\tau: C^{\infty}(\mathbb{S}^2,\R^3)\rightarrow C^{\infty}(\mathbb{S}^2,\R^3)$, $f(\theta)\mapsto f(-\theta)$ is the pull back of the antipodal map. This is the classical way to find the scatting matrix \cite{melrosebook}, Section 5.1. Uniqueness of the representation follows immediately from Rellich's theorem, c.f. for example both variants Theorems 4.17 and 4.18 in \cite{DZbook}. These theorems in \cite{DZbook} are stated for functions but their proofs generalize trivially to vectors here because the background metric is Euclidean. In other words, for every $Y\in C^{\infty}(\mathbb{S}^2,\R^3)$ there exists a unique $\Psi_{\lambda}\in C^{\infty}(\mathbb{S}^2,\R^3)$ and a unique solution of $E_{\lambda}(Y)$ of $$(\Delta_{\rel}-\lambda^2)E_{\lambda}(Y)=0$$ such that 
\begin{align}\label{eqnE2}
E_{\lambda}(Y)=\frac{e^{-\rmi\lambda r}e^{\frac{\rmi\pi}{2}}}{r}Y+\frac{e^{\rmi\lambda r}e^{-\frac{\rmi\pi}{2}}}{r}\Psi_{\lambda}+O\left(\frac{1}{r^2}\right) \quad r\rightarrow \infty
\end{align}  
by comparison with the above we get $\Psi_{\lambda}=\tau(Y+A_{\lambda}Y)$, c.f. Appendix \ref{rellich}, Prop \ref{OSprop}. Because we know that $\mathrm{div}(E_{\lambda}^{\mathrm{curl}}(Y))=0$ by construction, $E_{\lambda}^{\mathrm{curl}}(Y)$ must also must solve the Helmholtz equation on $M$. The asymptotic structure of $E_{\lambda}^{\mathrm{curl}}(Y)$ is the same as in \eqref{eqnE2}. This is proved in Proposition \ref{OSprop} and Lemma \ref{replacebase} in Appendix \ref{rellich}.  Therefore the representation $E_{\lambda}(Y)$ agrees with $E_{\lambda}^{\mathrm{curl}}(Y)$ on $\mathbb{R}^3\setminus B_1$, for a ball $B_1$ with $\mathcal{O}\subset \overline{B_1}$ following Theorem 4.17 in \cite{DZbook}. Alternatively one can arrange so the difference $E_{\lambda}^{\mathrm{curl}}(Y)-E_{\lambda}(Y)$ is outgoing on $\mathbb{R}^3\setminus B_2$ for any $B_2$ with $B_1\subset B_2$ and no outgoing solutions to the Helmholtz equation exist, c.f. Theorem 4.18 in \cite{DZbook}. The explicit form of $A_{\lambda}$ is found in the next subsection \ref{miescattering}. 
\end{proof} 
Finding the matrix $A_{\lambda}$ is known as Mie scattering where usually this is referred to as the T-matrix in the engineering and physics literature. The problem can also involve mixed boundary conditions for $A$ and $f$ as in \eqref{system} rather than the two boundary conditions in Lemma \ref{long}. Perfectly conducting boundary conditions, however, are the easiest to relate to the functional calculus which is the connection we establish here. 
\begin{lem}\label{equalityofeigenfunctions}
We have that $E_{\lambda}(Y)=E_{\lambda}^{\mathrm{curl}}(Y)$ on $M$.
\end{lem}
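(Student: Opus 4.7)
The plan is to reduce the identity to a unique continuation argument, using the agreement at infinity that is essentially already inside the proof of Lemma \ref{long}. Set $D := E_{\lambda}(Y) - E_{\lambda}^{\mathrm{curl}}(Y)$ on $M$; the goal is to show $D \equiv 0$.

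First I would verify that $D$ is a smooth solution of the Helmholtz equation $(\Delta_{\rel} - \lambda^2) u = 0$ on $M$. For $E_{\lambda}(Y)$ this is Proposition \ref{cvim}. For $E_{\lambda}^{\mathrm{curl}}(Y)$, parts (1) and (3) of Lemma \ref{long} give $\mathrm{div}\, E_{\lambda}^{\mathrm{curl}}(Y)=0$ and $(\mathrm{curl}\,\mathrm{curl} - \lambda^2)\, E_{\lambda}^{\mathrm{curl}}(Y) = 0$; combined with the identity $\mathrm{curl}\,\mathrm{curl} = \Delta_{\rel} + \nabla\,\mathrm{div}$ in the paper's sign convention (under which $\Delta_{\rel}$ has positive spectrum), divergence-freeness converts this into $(\Delta_{\rel} - \lambda^2)\, E_{\lambda}^{\mathrm{curl}}(Y) = 0$. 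Subtraction yields $(\Delta_{\rel} - \lambda^2) D = 0$ on $M$.

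Second, I would appeal to the argument embedded in Lemma \ref{long}: the matrix $A_{\lambda}$ was constructed precisely so that the asymptotic expansions \eqref{eqnE1} and \eqref{eqnE2} of $E_{\lambda}(Y)$ and $E_{\lambda}^{\mathrm{curl}}(Y)$ coincide at leading order as $r \to \infty$. Hence $D$ is an outgoing solution of the Helmholtz equation on $\mathbb{R}^{3}\setminus B_{1}$ whose leading incoming and outgoing asymptotic coefficients both vanish. Rellich's uniqueness theorem, in the form of Theorems 4.17 or 4.18 of \cite{DZbook} (applied componentwise, which is legitimate since the background metric on $\mathbb{R}^{3}\setminus B_{1}$ is Euclidean), then forces $D \equiv 0$ on $\mathbb{R}^{3}\setminus B_{1}$.

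Third, I would extend the equality from $\mathbb{R}^{3}\setminus B_{1}$ to all of $M$ by unique continuation. Each Cartesian component of $D$ is a smooth scalar solution on the connected open set $M$ of the second-order elliptic operator with constant, hence real-analytic, coefficients. Since $D$ vanishes on the non-empty open subset $\mathbb{R}^{3}\setminus B_{1} \subset M$, the classical unique continuation principle for Helmholtz-type operators forces $D \equiv 0$ on $M$, which is the claim of the lemma.

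The main obstacle I anticipate is purely bookkeeping: ensuring that the equivalence between $(\mathrm{curl}\,\mathrm{curl} - \lambda^{2})u = 0$ on divergence-free fields and $(\Delta_{\rel} - \lambda^{2})u=0$ is consistent with the sign convention under which $\Delta_{\rel}$ has positive spectrum, and that the ball $B_{1}$ from Lemma \ref{long} can be chosen so that $\mathbb{R}^{3}\setminus B_{1}$ is a non-empty open subset of the connected manifold $M$. Once these points are checked, the identity follows immediately from the combination of Lemma \ref{long} and scalar unique continuation.
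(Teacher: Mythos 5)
Your proof is correct, and the first two steps (reducing to the Helmholtz equation for $D := E_{\lambda}(Y) - E_{\lambda}^{\mathrm{curl}}(Y)$, then invoking Rellich's uniqueness theorem to kill $D$ on $\mathbb{R}^{3}\setminus B_{1}$) are exactly what the paper does, modulo phrasing. Where you diverge is in the final extension step from $\mathbb{R}^{3}\setminus B_{1}$ to all of $M$. The paper argues on the annular region $B_{1}\setminus\mathcal{O}$: since $D$ vanishes on $\partial B_{1}$ and satisfies $\nu\times D = 0$ on $\partial\mathcal{O}$, it is a solution of the Maxwell spectral boundary value problem on that shell with trivial data, and the authors invoke uniqueness of that BVP, remarking that the shell can be made arbitrarily thin (to push any interior Maxwell eigenvalues of the annulus past the fixed $\lambda$). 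You instead apply the unique continuation principle: each Cartesian component of $D$ is a real-analytic solution of the constant-coefficient Helmholtz equation on the connected set $M$, and it vanishes on the nonempty open set $\mathbb{R}^{3}\setminus B_{1}$, hence vanishes everywhere. Your route is arguably cleaner and more robust — it avoids the implicit claim that the shell $B_{1}\setminus\mathcal{O}$ has no Maxwell eigenvalue at $\lambda$, which the paper handles only by the somewhat informal "arbitrarily thin" remark — at the minor cost of appealing to a general elliptic unique continuation theorem (or analyticity of Helmholtz solutions) rather than the more self-contained BVP uniqueness the paper already has in its toolkit.
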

\begin{proof}
Lemma \ref{long} shows that the conditions of Proposition 1.2 in \cite{RBM} are satisfied in order, but instead of the relative Laplacian we have the curl operator. We could, of course, use a lengthy calculation to find $E_{\lambda}(Y)$ in this basis. However our $E_{\lambda}^{\mathrm{curl}}(Y)$ are already constructed so that $\mathrm{div}\, E_{\lambda}^{\mathrm{curl}}(Y)=0$ and $Y\in L^2_t(\mathbb{S}^{2},\mathbb{C}^3)$ and therefore they satisfy the Helmholtz equation on $M$. This follows since by construction every term in $E_{\lambda}^{\mathrm{curl}}(Y)$ is of the form of either $\mathrm{curl}(x f_{\ell}(\lambda r)Y_{\ell m})$ or $\mathrm{curl}(xf_{\ell}(\lambda r)Y_{\ell m})$ where $f_{\ell}=j_{\ell}$ or $h^{(1)}_{\ell}$ multiplied by a scalar. Uniqueness of the representation on $M\setminus B_1$ using \eqref{eqnE1} and \eqref{eqnE2} follows immediately from Rellich's theorem, c.f. for example Theorem 4.18 in \cite{DZbook}, as mentioned previously in the proof of Lemma \ref{long}. Since the representations agree on $\partial B_1$ and satisfy the boundary condition $\nu\times (E_{\lambda}^{\mathrm{curl}}(Y)-E(Y))=0$, the representations agree as solutions to the Maxwell spectral problem \eqref{esystem} on the set $B_1\setminus \mathcal{O}$. This set can be made arbitrarily thin anyways due to the fact $\mathcal{O}$ is also a ball. 
\end{proof} 
The major result of this section is then the observation that instead of a representation for the time-harmonic Maxwell equations, Lemmas \ref{long} and \ref{equalityofeigenfunctions} enable us to take this representation one step further and use the functional calculus to represent evolution operators associated to perfectly conducting objects. 
We remark for completeness that as in \cite{OS} that the generalised eigenfunctions $E_{\lambda}(Y)$ can also be considered as distributions in $\lambda$ with values in the Schwartz space of functions $\mathcal{S}(M; \mathbb{C}^3)$. Whenever $g\in C_0^{\infty}(\mathbb{R}^+)$ then $$\int\limits_{\mathbb{R}}g(\lambda)E_{\lambda}(Y)\,d\lambda$$ is square integrable. The inner product 
\begin{align}\label{diraclambda}
\langle E_{\lambda}(Y),E_{\mu}(\tilde{Y})\rangle_{L^2(M)}=(4\pi\lambda)\delta(\lambda^2-\mu^2)\langle Y,\tilde{Y}\rangle_{L^2(\mathbb{S}^{d-1})}
\end{align}
can be seen as a bidistribution in $\mathcal{D}'(\mathbb{R}_+\times\mathbb{R}_+)$. The way this is computed is by taking the limit
\begin{align}
\lim\limits_{R\rightarrow\infty}(\langle \Delta_{\rel}E_{\lambda}(Y),\chi_{R}E_{\mu}(\tilde{Y})\rangle_{L^2(M)}-(\langle E_{\lambda}(Y),\chi_{R}\Delta_{\rel}E_{\mu}(\tilde{Y})\rangle_{L^2(M)}.
\end{align}
This is due directly to the previous two Lemmas. Now we proceed to prove Stone's theorem in this setting. 

\begin{prop}\label{StoneFY}
Let $f,g\in C^\infty_0(M,\mathbb{R}^3)$ and $\lambda>0$. For any bounded Borel function $k: \R \to \C$ 
if $\mathrm{div} f=0=\mathrm{div}g$ then the Stone's formula for divergence free vector fields is given by
\begin{align} \label{deltaf}
(R_\lambda-R_{-\lambda})f=\frac{\rmi}{2\lambda}\sum\limits_n E_{\lambda}(Y_n)\langle f, E_{\lambda}(Y_n)\rangle
\end{align} 
in $C^\infty(M,\mathbb{R}^3)$. This implies
\begin{equation}
\label{Stonecoclosed}
\langle P_{ac} k(\Delta_{\mathrm{rel}}) f,g\rangle
= \frac{1}{2\pi}  \sum\limits_{n}  \int_0^\infty k(\lambda^2) \langle f, E_{\lambda}(Y_n)\rangle \langle  E_{\lambda}(Y_n),g \rangle \,\der \lambda,
\end{equation}
where $P_{ac}$ is the projection onto the absolutely continuous spectrum.
\end{prop}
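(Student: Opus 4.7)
The plan is to establish the resolvent identity \eqref{deltaf} directly and then derive the functional-calculus representation \eqref{Stonecoclosed} from it via the standard abstract Stone's formula. For $A=\Delta_{\rel}$ positive self-adjoint, the identity
\begin{align*}
\langle P_{ac}k(A)f,g\rangle = \frac{1}{2\pi i}\int_0^\infty k(z)\,\langle [R(z+i0)-R(z-i0)]f,g\rangle\,dz
\end{align*}
together with the change of variables $z=\lambda^2$ (so $dz=2\lambda\,d\lambda$) and the identifications $R(\lambda^2+i0)=R_\lambda$, $R(\lambda^2-i0)=R_{-\lambda}$ supplied by the limiting absorption principle (c.f.~\cite{OS}) converts \eqref{deltaf} into \eqref{Stonecoclosed} after exchanging the sum over $n$ with the integral over $\lambda$.

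To prove \eqref{deltaf} the strategy is to match both sides as divergence-free generalized eigenfunctions. Both $(R_\lambda-R_{-\lambda})f$ and each $E_\lambda(Y_n)$ solve $(\Delta_{\rel}-\lambda^2)u=0$ on $M$ with relative boundary conditions: the former is divergence-free because $f$ is divergence-free and the Helmholtz decomposition \eqref{L2decomp} shows that $\Delta_{\rel}$ preserves the divergence-free subspace, and the latter is divergence-free by Lemma \ref{long}. For $\lambda>0$, $R_\lambda f$ is purely outgoing at infinity while $R_{-\lambda}f$ is purely incoming, so the difference exhibits exactly the incoming/outgoing asymptotic structure of \eqref{eqnE2} and is thus a generalized eigenfunction in the sense of Proposition \ref{1.2}. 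Expanding this difference as $\sum_n c_n E_\lambda(Y_n)$ and extracting $c_n$ reduces to a Green's identity computation: applying Green's second identity to $E_\lambda(Y_n)$ and $R_{\pm\lambda}f$ on $M_R$, the interior contribution collapses to $\langle f,E_\lambda(Y_n)\rangle$ via $(\Delta-\lambda^2)R_{\pm\lambda}f = f$, the $\partial\mathcal{O}$ boundary term vanishes by the relative boundary condition (Lemma \ref{long}), and the $\partial B_R$ term, computed from the radiation patterns of $R_{\pm\lambda}f$ together with \eqref{eqnE2}, produces the requisite factor $\mp 2\lambda/i$ in the $R\to\infty$ limit. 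Subtracting the identities for $R_\lambda$ and $R_{-\lambda}$ yields $c_n = \tfrac{i}{2\lambda}\langle f,E_\lambda(Y_n)\rangle$, which is \eqref{deltaf}.

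The main technical obstacle is twofold. First, carrying out the radiation-pattern computation inside the VSH basis is delicate because $\Psi_{1,\ell,m}$ transforms differently from $\Psi_{2,\ell,m}$ and $\Psi_{3,\ell,m}$ under curl, and the latter two combine spherical-harmonic degrees $\ell\pm 1$. The Bessel-function expansion of the free-space resolvent kernel must therefore be organized sector-by-sector to preserve the divergence-free constraint and to produce expansions lying entirely in $L^2_t(\mathbb{S}^2,\mathbb{C}^3)$, as required by Lemma \ref{long}. Second, justifying the convergence of the series in $C^\infty(M,\mathbb{R}^3)$ and the subsequent exchange of the sum with the limiting-absorption integral requires uniform-in-$\ell$ control on both factors. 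The rapid decay of $\langle f,E_\lambda(Y_n)\rangle$ in $\ell$ for compactly supported $f$ follows from repeated integration by parts combined with the large-$\ell$ Bessel bounds of Appendix \ref{lob}, while the polynomial-in-$\ell$ growth of $E_\lambda(Y_n)$ on compact subsets of $M$ is already given by Proposition \ref{cvim}; together these supply the uniform estimates needed for both convergence statements.
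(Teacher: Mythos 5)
Your proposal is correct and follows essentially the same route as the paper's proof. Both proofs hinge on the same two steps: (i) use the asymptotic structure of $(R_\lambda-R_{-\lambda})f$ together with Rellich's uniqueness theorem to identify it with a generalized eigenfunction $E_\lambda(\Psi)$, and (ii) extract the coefficients by a Green's-identity/integration-by-parts computation pairing $E_\lambda(Y_n)$ against the resolvent term and reading off the $\pm 2i\lambda$ factor from the radiation patterns at $\partial B_R$. The only cosmetic difference is bookkeeping: the paper performs the Green's identity once with $R_{-\lambda}f$ (since $R_\lambda f$ is purely outgoing and so does not contribute to the incoming coefficient $\Psi$), obtaining $\langle f,E_\lambda(Y_n)\rangle=-2\rmi\lambda\langle\Psi,Y_n\rangle$ directly and then expanding $\Psi$ in VSH; you propose doing it for $R_\lambda$ and $R_{-\lambda}$ separately and subtracting, which lands in the same place. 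Your derivation of \eqref{Stonecoclosed} from \eqref{deltaf} via the abstract Stone formula and the substitution $z=\lambda^2$ is exactly the route the paper gestures at ("Stone's theorem then gives the spectral measure and the complete spectral decomposition"). The worries you raise about degree-$(\ell\pm1)$ mixing in the VSH are not actually an obstacle in the paper's version, because once $(R_\lambda-R_{-\lambda})f=E_\lambda(\Psi)$ is established, the expansion $\Psi=\sum_n\langle\Psi,Y_n\rangle Y_n$ converges in $C^\infty(\mathbb{S}^2,\mathbb{C}^3)$ by smoothness of $\Psi$, and continuity of $Y\mapsto E_\lambda(Y)$ handles the rest; the uniform-in-$\ell$ estimates you invoke are instead deployed in the paper via Proposition~\ref{cvim} and Lemma~\ref{maxgen}.
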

Stone's theorem as well as its corollaries in the previous section hold by replacing the generalized eigenfunctions written with respect to a basis of harmonic polynomials in \cite{RBM} with $E_{\lambda}(Y_{n})$, using Rellich's theorem.  We the proof include it here for completeness since we have a new eigenbasis. 
\begin{proof}[Proof of Proposition \ref{StoneFY}]
Assume that $f\in C_0^{\infty}(M,\mathbb{R}^3)$ is a divergence free vector field. Then for fixed $\lambda>0$ we have that $R_{-\lambda} f$
is incoming. We also know that incoming waves have asymptotic behavior by Appendix \ref{rellich}
\begin{align}
R_{-\lambda} f=\frac{e^{-\rmi\lambda r}e^{\frac{\rmi\pi}{2}}}{r}\Psi+O\left(\frac{1}{r^2}\right)
\end{align}
as $r\rightarrow \infty$ whenever $\Psi\in C^{\infty}(\mathbb{S}^2,\mathbb{C}^3)$. \\ Therefore we have that 
\begin{align}
(R_{\lambda}-R_{-\lambda})f=\frac{e^{-\rmi\lambda r}e^{\frac{\rmi\pi}{2}}}{r}\Psi+\frac{e^{\rmi\lambda r}e^{-\frac{\rmi\pi}{2}}}{r}\tilde{\Psi}+O\left(\frac{1}{r^2}\right)
\end{align}
as $r\rightarrow \infty$ for some $\tilde{\Psi}\in C^{\infty}(\mathbb{S}^2,\mathbb{C}^3)$. \\Because $E_{\lambda}(\Psi)$ and  $(R_{\lambda}-R_{-\lambda})f$ have the same asymptotic term then they necessarily coincide by Rellich's uniqueness theorem. Integration by parts results in 
\begin{align}
\langle f,E_{\lambda}(Y_{n})\rangle_{L^2(M)}=\langle(\Delta-\lambda^2)R_{-\lambda}f, E_{\lambda}(Y_{n})\rangle_{L^2(M)}=-2\rmi\lambda\langle \Psi,Y_{n}\rangle_{L^2(\mathbb{S}^2)}.
\end{align}
Expanding $\Psi$ into the basis of vector spherical harmonics we see that 
\begin{align}
\Psi=\sum\limits_{n}\langle \Psi,Y_{n}\rangle_{L^2(M)}Y_{n}
\end{align}
with the sum converging in $C^{\infty}(\mathbb{S}^2,\mathbb{C}^3)$ because $\Psi$ is smooth. Therefore we have that 
\begin{align}
\Psi=\frac{i}{2\lambda}\sum\limits_{n}\langle f, E_{\lambda}(Y_{n})\rangle_{L^2(M)}Y_{n}.
\end{align}
It follows that 
\begin{align}
(R_{\lambda}-R_{-\lambda})f=E_{\lambda}(\Psi)=\frac{\rmi}{2\lambda}\sum\limits_{n}E_{\lambda}(Y_{n})\langle f,E_{\lambda}(Y_{n})\rangle_{L^2(M)}.
\end{align} 
From the properties of the resolvent and $E_{\lambda}(Y)$, the map $Y\mapsto E_{\lambda}(Y)$ for $Y\in L^2_t(\mathbb{S}^2,\mathbb{C}^3)$ is continuous. The sum converges in $C^{\infty}(M,\mathbb{C}^3)$ due to the construction in Lemma \ref{maxgen} and the bounds there. The representation formula follows. One can compute using the Bessel and Hankel function asymptotics that the equalities still hold for $\Im \lambda>0$ uniformly on compact subsets of the complex plane. Combining these observations, Stone's theorem then gives the spectral measure and the complete spectral decomposition. The last equality \eqref{Stonecoclosed} is a direct result of Stone's theorem once the formula for the spectral measure has been established. For any spectral measure $dB_{\lambda}$ on the real line corresponding to the continuous spectrum we have that for any $g,f\in C_0^{\infty}(M;\mathbb{R}^3)$ 
\begin{align}
\langle dB_{\lambda}f,g\rangle=\frac{1}{2\pi}\chi_{[0,\infty)}(\lambda)\sum\limits_{n}\langle f,E_{\lambda}(Y_n)\rangle \langle E_{\lambda}(Y_n),g\rangle\,d\lambda.
\end{align}
Therefore for any bounded Borel function $k$ we can conclude \eqref{Stonecoclosed}.
\end{proof} 
We then have
\begin{prop}\label{stone2Y}
 If $k$ is a Borel function with $k= O((1+\lambda^2)^{-q})$ for all $q\in\mathbb{N}$ we have that $k(\Delta_{\mathrm{rel}})$ has smooth integral kernel
 $K_k \in C_0^{\infty}(M \times M;\mathbb{C}^3\times \mathbb{C}^3 )$. Its domain is the divergence free $f\in C_0^{\infty}(M;\mathbb{R}^3)$ and 
 \begin{gather}
K_k(x,y) = k(0) \sum_{j=1}^N u_j(x) \otimes (u_j(y))^* + \frac{1}{2\pi}  \sum\limits_{n}  \int_0^\infty k(\lambda^2) E_{\lambda}(Y_n)(x) \otimes E_{\lambda}(Y_n)^*(y)\,\der \lambda,
\end{gather}
where the sum converges in $C^{\infty}_0(M \times M; \mathbb{C}^3 \times \mathbb{C}^3)$. The $u_j's$ are the $L^2$ normalized functions in $\mathrm{ker}(\Delta_{\mathrm{rel}})$. 
\end{prop}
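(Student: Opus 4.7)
My plan is to derive the representation directly from Proposition \ref{StoneFY} combined with the spectral decomposition $k(\Delta_{\mathrm{rel}}) = k(0) P_{\ker} + P_{ac}\, k(\Delta_{\mathrm{rel}})$. Since $\Delta_{\mathrm{rel}}$ is self-adjoint and non-negative, its spectrum decomposes into a (possibly trivial) finite-dimensional kernel together with the absolutely continuous part on $[0,\infty)$ already handled by Stone's formula. Because $f$ is divergence-free and compactly supported, the functional calculus applies cleanly and the restriction to divergence-free test functions avoids any complications with the gradient part of the Helmholtz decomposition \eqref{L2decomp}.

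First, I would use the standard fact that $\ker(\Delta_{\mathrm{rel}})$ in the relevant domain is finite dimensional (under the hypothesis that the domain is the divergence-free, curl-admissible subspace). Writing $u_1,\dots, u_N$ for an $L^2$-orthonormal basis of this kernel, the projector onto $\ker(\Delta_{\mathrm{rel}})$ has integral kernel $\sum_{j=1}^N u_j(x) \otimes u_j(y)^*$, and multiplication by $k(0)$ produces the first term in $K_k$. This portion is automatically smooth and compactly supported once we know the harmonic representatives are (which follows from elliptic regularity applied to divergence-free harmonic vector fields subject to the relative boundary condition).

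For the absolutely continuous part, I would insert the distributional Stone formula \eqref{Stonecoclosed} from Proposition \ref{StoneFY} directly to read off the claimed kernel
\begin{equation*}
\frac{1}{2\pi} \sum_n \int_0^\infty k(\lambda^2)\, E_\lambda(Y_n)(x) \otimes E_\lambda(Y_n)^*(y)\, d\lambda.
\end{equation*}
The remaining task is to verify that this expression converges as a kernel in $C^\infty(M \times M; \mathbb{C}^3 \times \mathbb{C}^3)$ and defines a bounded operator whose pairing with test $f,g$ matches \eqref{Stonecoclosed}. For this I would combine two ingredients already in the paper: the polynomial-in-$\ell$ bounds on $E_\lambda(Y_n)$ (and its $x$-derivatives) inherited from the Bessel function estimate $|J_\mu(z)|\le \frac{|z/2|^\mu e^{|\Im z|}}{\Gamma(\mu+1)}$ used in Proposition \ref{cvim}, together with the large-$\lambda$ Hankel/Bessel asymptotics. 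Since $k$ decays faster than any polynomial in $\lambda$, differentiating under the integral in $x$ and in $y$ to any order produces an integrand dominated by $|k(\lambda^2)|$ times polynomial factors in $\lambda$ and $\ell_n$, and summing in $n$ after applying Cauchy--Schwarz against the spectral weight $(4\pi\lambda)^{-1}$ from \eqref{diraclambda} yields absolute convergence on every compact subset of $M\times M$.

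The main obstacle I anticipate is the \emph{uniform} control of the generalised eigenfunctions $E_\lambda(Y_n)$ and their derivatives in the joint variables $(\lambda,\ell,m)$ on compact sets containing part of $\partial\mathcal{O}$. Near the boundary, the resolvent correction $R_\lambda(\Delta_{\mathrm{rel}}-\lambda^2)(\chi \tilde j_{\lambda,Y}(Y))$ used to define $E_\lambda(Y_n)$ in Proposition \ref{cvim} must be bounded together with its derivatives uniformly in $n$; this requires the Mie scattering matrix $A_\lambda$ constructed in Section \ref{miescattering} to have at most polynomial growth in $\ell$ and $\lambda$, after which the Hankel function decay of $\tilde h^{(1)}_{\lambda,Y}(A_\lambda Y_n)$ for $\lambda r > \ell$ closes the estimate. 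Once these uniform bounds are in place, Fubini and dominated convergence identify the pointwise-defined kernel with the Schwartz kernel of $P_{ac} k(\Delta_{\mathrm{rel}})$, completing the proof.
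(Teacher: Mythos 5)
Your proposal is correct and follows essentially the same route as the paper, which itself declines to write out the argument and instead refers the reader to Proposition \ref{StoneFY} and to Prop.~2.8 in \cite{RBM}, stating the proof is ``formally the same'' with $\Phi$ replaced by $Y$. You spell out the implicit split $k(\Delta_{\mathrm{rel}}) = k(0)P_{\ker} + P_{ac}k(\Delta_{\mathrm{rel}})$ and supply the convergence bookkeeping (rapid decay of $k$, polynomial-in-$\ell$ growth of $E_\lambda(Y_n)$ from Proposition \ref{cvim}, Bessel/Hankel asymptotics) that the paper delegates to the cited reference; these additions are consistent with the paper's intent and do not change the method.
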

The proof of Proposition \ref{stone2Y} follows basically identically to Proposition \ref{StoneFY} and therefore will not be repeated here, c.f. for example Prop. 2.8 in \cite{RBM} with $\Phi$ replaced by $Y$. The proof there is formally the same. 

\subsection{Mie Scattering for the sphere}\label{miescattering}
 We know that 
\begin{equation*}
E_{Y}^{\mathrm{curl}}(Y_\mu)|_{\R^3\backslash \cal{O}}=\tilde{j}_\lambda(Y_\mu)+\tilde{h}_\lambda^{(1)}(A_{\lambda} Y_\mu)=\tilde{h}_\lambda^{(2)}(Y_\mu)+\tilde{h}_\lambda^{(1)}(S_{\lambda}Y_\mu).
\end{equation*}
by construction for some unique matrices $A_{\lambda}$ and $S_{\lambda}$, as a result of Lemmas \ref{long} and \ref{equalityofeigenfunctions}. It remains to find them explicitly by matching up the boundary conditions on the sphere. Furthermore by symmetry one has $A_{\lambda}=A_{\lambda}^t$ on $Y\in L^2_t(\mathbb{S}^2,\mathbb{C}^3)$. This formula is exact in the exterior of a sphere in 3 dimensions. In any dimension, representations for generalized eigenfunctions using Mie scattering could be done in the basis of harmonic polynomials, used in \cite{OS,RBM} but their corresponding scattering matrix is not diagonal which is why we chose to switch to this basis. This will allow us to do calculations with the kernel defined in Proposition \ref{stone2Y} explicitly for the Maxwell propagator. We start with the following Lemma
\begin{lem}\label{maxgen}
The Maxwell generalized eigenfunction for a perfectly conducting sphere of radius $\rho$ is given by 
\begin{equation}
\label{generalisedE}
E_{\lambda}^{\mathrm{curl}}(Y_\mu)|_{\R^3\backslash \cal{O}}=\tilde{j}_\lambda(Y_\mu)+\tilde{h}_\lambda^{(1)}(A^t_{\lambda}Y_\mu)
\end{equation}
where $A^t_{\lambda}$ is a diagonal matrix given by 
\begin{align}\label{Atan}
\langle A^t_{\lambda} \Psi_{1,\ell,m},\Psi_{1,\ell,m} \rangle= -\frac{2j_{\ell}(\lambda\rho)}{h_{\ell}^{(1)}(\lambda\rho)},
\end{align}
and
\begin{align}\label{Anormal}
\langle A^t_{\lambda} \Psi_{2,\ell,m},\Psi_{2,\ell,m} \rangle= -\frac{2(zj_{\ell}(z))'|_{z=\lambda \rho}}{(zh_{\ell}^{(1)}(z))'|_{z=\lambda \rho}},
 \end{align}
where $\Psi_{j,\ell,m}$ $j=1,2$ form a basis for $L^2_t(\mathbb{S}^{2},\mathbb{C}^3)$. 
\end{lem}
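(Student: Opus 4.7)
The proof proceeds by direct boundary matching, mode by mode. Combining Lemma~\ref{long} with Lemma~\ref{equalityofeigenfunctions}, the ansatz $E_{\lambda}^{\mathrm{curl}}(Y_\mu)=\tilde j_\lambda(Y_\mu)+\tilde h^{(1)}_\lambda(A^t_\lambda Y_\mu)$ uniquely determines an operator $A^t_\lambda$ on $L^2_t(\mathbb{S}^2,\mathbb{C}^3)$, and every condition of Lemma~\ref{long} except the perfectly conducting condition $\nu\times E^{\mathrm{curl}}_\lambda(Y_\mu)|_{\partial\mathcal O}=0$ is already built into the definitions of $\tilde j_\lambda$ and $\tilde h^{(1)}_\lambda$. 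The remaining task is therefore to impose that boundary condition at $r=\rho$ and read off the matrix entries.

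First I would establish diagonality of $A^t_\lambda$ in the VSH basis $\{\Psi_{1,\ell,m},\Psi_{2,\ell,m}\}$ of $L^2_t(\mathbb{S}^2,\mathbb{C}^3)$. Inspection of the defining series shows that $\tilde j_\lambda(\Psi_{1,\ell,m})$ has a single nonzero VSH component, along $\Psi_{1,\ell,m}$, while $\tilde j_\lambda(\Psi_{2,\ell,m})$ is supported in $\{\Psi_{2,\ell,m},\Psi_{3,\ell,m}\}$; the analogous statement holds for $\tilde h^{(1)}_\lambda$. Orthogonality of the VSH across distinct $(\ell,m)$ and across the two types $j=1,2$ then forces the boundary equations to decouple into independent scalar problems, one for each $(j,\ell,m)$, consistent with the block structure~\eqref{decomp}.

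Next I would compute the action of $\nu\times(\cdot)$ on the VSH at $r=\rho$. Since $\Psi_{3,\ell,m}=\hat r\,Y_{\ell m}$ is radial, $\nu\times\Psi_{3,\ell,m}=0$. For the two tangential VSH, the BAC--CAB identity combined with the tangentiality of $\nabla Y_{\ell m}$ and the defining formulas yields
\begin{equation*}
\nu\times\Psi_{1,\ell,m}\big|_{r=\rho}=\Psi_{2,\ell,m}\big|_{r=\rho},\qquad
\nu\times\Psi_{2,\ell,m}\big|_{r=\rho}=-\Psi_{1,\ell,m}\big|_{r=\rho}.
\end{equation*}
Thus $\nu\times(\cdot)|_{r=\rho}$ annihilates the $\Psi_3$ direction and swaps the two tangential directions up to a sign, reducing the vector boundary condition to one scalar equation in each mode.

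For $Y_\mu=\Psi_{1,\ell,m}$, the ansatz collapses at $r=\rho$ to $[2\lambda j_\ell(\lambda\rho)+\alpha_1\lambda h^{(1)}_\ell(\lambda\rho)](-\rmi)^\ell\Psi_{1,\ell,m}$, where $\alpha_1=\langle A^t_\lambda\Psi_{1,\ell,m},\Psi_{1,\ell,m}\rangle$; taking $\nu\times$, setting the bracket to zero, and solving yields~\eqref{Atan}. For $Y_\mu=\Psi_{2,\ell,m}$ the $\Psi_3$-terms drop under $\nu\times$, and matching the resulting $\Psi_{1,\ell,m}$ coefficient to zero produces $2(zj_\ell(z))'|_{z=\lambda\rho}+\alpha_2(zh^{(1)}_\ell(z))'|_{z=\lambda\rho}=0$, which rearranges to~\eqref{Anormal}. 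The only subtle point is that the Hankel quantities $h^{(1)}_\ell(\lambda\rho)$ and $(zh^{(1)}_\ell(z))'|_{z=\lambda\rho}$ must not vanish for $\lambda>0$ real, so that the inversions are legitimate; this is standard, as the zeros of these functions correspond to scattering resonances of the ball and lie off the positive real axis, on which we are working.
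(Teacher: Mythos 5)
Your proof is correct and follows essentially the same approach as the paper: imposing $\nu\times E_\lambda^{\mathrm{curl}}(Y_\mu)|_{r=\rho}=0$ mode by mode in the VSH basis, using that $\Psi_{3,\ell,m}$ is annihilated by $\nu\times$ while the two tangential families are swapped (up to sign), and reading off the diagonal entries. The paper's version is terser but the substance is identical; your explicit identities $\nu\times\Psi_{1,\ell,m}=\Psi_{2,\ell,m}$, $\nu\times\Psi_{2,\ell,m}=-\Psi_{1,\ell,m}$ and the remark that $h^{(1)}_\ell(\lambda\rho)$ and $(zh^{(1)}_\ell(z))'|_{z=\lambda\rho}$ are nonvanishing for real $\lambda>0$ are correct and make the divisions legitimate.
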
 
\begin{proof}
It suffices to find $A_{\lambda}$ so that $\nu\times (\tilde{j}_\lambda(Y_\mu)+\tilde{h}_\lambda^{(1)}(A_{\lambda}Y_\mu))=0.$ 
In the case of the obstacle being a sphere of radius $\rho$, this implies the tangential components must vanish:
\begin{equation*}
\left\{
\begin{aligned}
&\text{For $\mu=(1,\ell,m)$}, \quad 2\lambda\Psi_{\mu}(\theta)j_{\ell}(\lambda \rho)(-\rmi )^{\ell}+\lambda\sum\limits_{n=(j,\ell_0,m_0)} (A_{\lambda})_{\mu,n}\Psi_{j,\ell_0,m_0}(\theta) h^{(1)}_{\ell_0}(\lambda \rho)(-\rmi)^{\ell_0}=0. \\
&\text{For $\mu=(2,\ell,m)$}, \quad 2\Psi_{\mu}\frac{\left (z j_{\ell}(z)\right)'}{\rho}|_{z=\lambda \rho}(-\rmi)^{\ell-1}\\
&\qquad\qquad\qquad\qquad\qquad\qquad\quad+\sum\limits_{n=(j,\ell_0,m_0)} (A_{\lambda})_{\mu,n}\Psi_{j,\ell_0,m_0}(\theta) \frac{\left(z h^{(1)}_{\ell}(z)\right)'}{\rho}|_{z=\lambda \rho}(-\rmi)^{\ell_0-1}=0.
\end{aligned}
\right.
\end{equation*}
Since the vector spherical harmonics with $j=1,2$ form a basis for the space $L^2_t(\mathbb{S}^{2},\mathbb{C}^3)$, the tangential part is given by the diagonal matrix with two different types of diagonal entries. 
As a consequence, $(A_\lambda)_{j,\ell,m}$ is diagonal and its components are given by \eqref{Atan} and \eqref{Anormal}. This result is popular in the physics literature \cite{scat,mie,stratton}. This concludes the result for $A^t_{\lambda}$. The result for $S^t_{\lambda}$ follows immediately from the definition.
\end{proof}
Applying Proposition \ref{stone2Y} and Proposition \ref{StoneFY} to $k(\lambda^2)=\varphi(h \lambda)e^{\rmi \lambda t}$, we have the kernel of $k(\Delta_{\mathrm{rel}})$ on divergence free vector fields is found by using \eqref{stone2Y} and also Lemma \ref{equalityofeigenfunctions}. We label this as $K^{\mathcal{O},m}(y,y',\rho,t,h)$, 
\begin{align}\label{maxkernel}
&K^{\mathcal{O},m}(y,y',\rho,t,h) \nonumber
:= \\& \frac{1}{2\pi h}  \sum\limits_{\mu}  \int_0^\infty \varphi(\lambda)e^{\rmi \frac{\lambda}{h} t}\left(E_{\frac{\lambda}{h}}^{\mathrm{curl}}(Y_\mu)(y)\right)\left(E_{\frac{\lambda}{h}}^{\mathrm{curl}}(Y_\mu)(y')\right)^* \der \lambda.
\end{align}
We may use this as a representation of our operator on $f\in C_0^{\infty}(M; \mathbb{R}^3)$ which are divergence free from Proposition \ref{stone2Y} followed by Lemma \ref{equalityofeigenfunctions}. We bound the representation in spherical coordinates, identifying $y,y'$ with $(r,\theta)$ and $(r',\theta')$ respectively. 
 
We bound this in the next section.  
\section{Proof of Theorem \ref{spheretheorem2}}\label{laterproofs}
We suppress the dimension $d=3$ for this section where it is understood. We define for $r\geq \rho$ the elements of the diagonal matrices $B_{1,\lambda}(\rho,r),B_{2,\lambda}(\rho,r),$ and $B_{3,\lambda}(\rho,r)$ as
\begin{align*}
B_{1,\ell,\lambda}(\rho,r):=-\frac{h^{(1)}_{\ell}(\lambda r)}{h^{(1)}_{\ell}(\lambda \rho)}
\end{align*}
and  
\begin{align}
B_{2,\ell,\lambda}(\rho,r):=-\frac{\frac{1}{r}(zh^{(1)}_{\ell}(z))'|_{z=\lambda r}}{(zh^{(1)}_{\ell}(z))'|_{_{z=\lambda \rho}}}
\end{align}
and lastly
\begin{align}
B_{3,\ell,\lambda}(\rho,r):=-\frac{\sqrt{\ell(\ell+1)}}{r}\frac{h^{(1)}_\ell(\lambda r)}{(zh^{(1)}_{\ell}(z))'|_{z=\lambda \rho}}
\end{align}
respectively.
We then have the following Lemma 
\begin{lem}\label{matrixbound}
The following estimates hold for the entries of $B_{1,\ell,\frac{\lambda}{h}}(\rho,r)$ for $\lambda\in \mathrm{supp}(\varphi)$
\begin{align}\label{Bbound}
|B_{1,\ell,\frac{\lambda}{h}}(\rho,r)|\leq 1 \qquad \left|\frac{\der}{\der\lambda}B_{1,\ell,\frac{\lambda}{h}}(\rho,r)\right|\leq \frac{2}{h}\max\{r,(\ell+1)h\}
\end{align}
and for the entries of $B_{2,\ell,\frac{\lambda}{h}}(\rho,r)$ there is a constant independent of $r$ and $\ell$ such that for $w=\frac{\lambda\rho}{h}$
\begin{align}\label{Bbound2}
|B_{2,\ell,\frac{\lambda}{h}}(\rho,r)|\leq C\begin{cases}\frac{1}{h} \quad \ell<we \\ \frac{\ell h}{\lambda}+1 \quad \ell >we\end{cases} \qquad \left|\frac{\der}{\der\lambda}B_{2,\ell,\frac{\lambda}{h}}(\rho,r)\right|\leq  C\begin{cases}\frac{r}{h^2} \quad \ell<we \\ \frac{\ell(\ell+1)h}{\lambda}+1  \quad \ell >we\end{cases}
\end{align}
and for the entries of $B_{3,\ell,\frac{\lambda}{h}}(\rho,r)$ there is a constant independent of $r$ and $\ell$ such that for $w=\frac{\lambda\rho}{h}$
\begin{align}\label{Bbound3}
|B_{3,\ell,\frac{\lambda}{h}}(\rho,r)|\leq C\begin{cases}\frac{1}{h} \quad \ell<we \\ \frac{\ell h}{\lambda}\quad \ell >we\end{cases} \qquad \left|\frac{\der}{\der\lambda}B_{3,\ell,\frac{\lambda}{h}}(\rho,r)\right|\leq  C\begin{cases}\frac{r}{h^2} \quad \ell<we \\ \frac{\ell(\ell+1)h}{\lambda}+1  \quad \ell >we\end{cases}.
\end{align}
\end{lem}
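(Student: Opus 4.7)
The strategy is to reduce each bound to standard estimates on spherical Hankel functions $h^{(1)}_\ell$ collected in the Appendix, using the dichotomy between the oscillatory regime $\ell<we$ (where $w=\lambda\rho/h$) and the exponentially behaved regime $\ell>we$. The factor $e$ in the threshold serves as a buffer so that the Debye-type large-order asymptotics are used only where they are sharp.

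For $B_1$ the pointwise bound $|B_{1,\ell,\lambda/h}(\rho,r)|\leq 1$ is immediate from the classical monotonicity of the squared modulus $|h^{(1)}_\ell(x)|^2=j_\ell(x)^2+y_\ell(x)^2=\frac{\pi}{2x}\bigl(J_{\ell+1/2}(x)^2+Y_{\ell+1/2}(x)^2\bigr)$ on $(0,\infty)$, combined with $r\geq\rho$. For the $\lambda$-derivative, I apply the quotient rule and the recurrence
\begin{equation*}
(h^{(1)}_\ell(z))'=h^{(1)}_{\ell-1}(z)-\tfrac{\ell+1}{z}h^{(1)}_\ell(z)
\end{equation*}
to reduce $\partial_\lambda B_{1,\ell,\lambda/h}$ to ratios of adjacent-order Hankel functions controlled by the Appendix estimates. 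The chain-rule factor $r/h$ produces the $r/h$ contribution in the oscillatory regime, while in the regime $\ell>we$ the $(\ell+1)/z$ piece of the recurrence, evaluated at $z=\lambda r/h$ or $\lambda\rho/h$, yields the $(\ell+1)h$ contribution after absorbing the bounded $1/\lambda$ on $\supp\varphi$ into the constant.

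For $B_2$ and $B_3$ I rewrite $(zh^{(1)}_\ell(z))'=zh^{(1)}_{\ell-1}(z)-\ell h^{(1)}_\ell(z)$ so that both numerators and denominators become explicit combinations of $h^{(1)}_{\ell-1}$ and $h^{(1)}_\ell$. In the regime $\ell<we$, the Appendix bound $|h^{(1)}_\ell(z)|\leq C/z$ for $z\gtrsim \ell$ together with the matching lower bound $|(zh^{(1)}_\ell(z))'|_{z=\lambda\rho/h}|\gtrsim 1$ (since $\lambda\rho/h\gtrsim 1$ by the hypothesis $\rho\geq 2/a$), combined with the prefactors $1/r$, $\sqrt{\ell(\ell+1)}/r$, yields the $O(1/h)$ bounds. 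In the regime $\ell>we$ the large-order asymptotics $|h^{(1)}_\ell(z)|\sim(2\ell-1)!!\,z^{-\ell-1}$ and its analogue for the derivative let the double-factorials cancel between numerator and denominator, leaving a decay factor $(\rho/r)^{\ell+1}\leq 1$ times an algebraic correction in $\ell$ that produces the $\ell h/\lambda$ and $\ell h/\lambda+1$ terms. The derivative estimates follow by the same case split: each $\partial_\lambda$ generates either the chain factor $r/h$ (small $\ell$) or the $(\ell+1)/\lambda$ contribution from differentiating the large-order asymptotic (large $\ell$), and the explicit prefactors then assemble into the stated right-hand sides.

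The principal difficulty will be keeping quantitative control through the transition region $\ell\approx we$, where neither asymptotic regime is uniformly sharp, and ensuring that no uncancelled $(2\ell-1)!!$ or $r^\ell$ factor escapes the bound. The $e$-buffer in the threshold $\ell=we$ is precisely the tool that handles this: away from the transition both the oscillatory and the large-order asymptotics hold with constants depending only on $a$ and $\rho$, and within the transition strip the overlap of the two estimates (which is essentially uniform in $\ell$ there) furnishes a single bound of the stated form.
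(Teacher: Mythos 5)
Your proposal follows the same broad architecture as the paper's proof---the dichotomy at $\ell\sim we$, the recurrence $(zh^{(1)}_\ell)'=zh^{(1)}_{\ell-1}-\ell h^{(1)}_\ell$, the monotonicity of $|h^{(1)}_\ell(z)|^2=j_\ell(z)^2+y_\ell(z)^2$ for the $B_1$ bound, and chain-rule bookkeeping for the $\lambda$-derivatives. But there is a genuine gap precisely at the point you flag as the ``principal difficulty'': the transition region $\ell\approx we$ is not actually handled, and the claim that ``the overlap of the two estimates... furnishes a single bound'' is not correct as stated. Two specific problems arise there. First, your asserted upper bound $|h^{(1)}_\ell(z)|\le C/z$ with $C$ uniform in $\ell$ fails near the turning point: for $z\sim\ell$ one has $|h^{(1)}_\ell(\ell)|\sim\ell^{-5/6}$, which is \emph{larger} than $C/\ell$ for large $\ell$. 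Second, the key quantitative step is a \emph{lower} bound on $|(zh^{(1)}_\ell(z))'|$ at $z=\lambda\rho/h$ uniformly over $\ell$ in the transition strip; simply saying ``$\gtrsim 1$ since $\lambda\rho/h\gtrsim 1$'' is not a proof, since $(zj_\ell(z))'$ has zeros and there is no immediate reason a derivative of a complex-valued combination should stay away from zero in modulus across the turning region.

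The paper resolves exactly this with a trick you do not use: differentiate the Nicholson-type identity $|wh^{(1)}_\ell(w)|^2=\sum_{k=0}^{\ell}s_k(\ell+\tfrac12)\,w^{-2k}$ in $w$, observe that the derivative of the left side is bounded above by $2\,|(wh^{(1)}_\ell)'|\cdot|wh^{(1)}_\ell|$ while the right side is a sum of nonnegative terms bounded below by $(|wh^{(1)}_\ell(w)|^2-1)/w$, and conclude
\begin{equation*}
\left|\frac{d}{dw}\bigl(wh^{(1)}_\ell(w)\bigr)\right|^{-1}\le\frac{2w\,|wh^{(1)}_\ell(w)|}{|wh^{(1)}_\ell(w)|^2-1}.
\end{equation*}
Combined with $|wh^{(1)}_\ell(w)|^2\ge 1+\ell(\ell+1)/w^2$ and the defining inequalities of the transition strip $\ell/e<w<2(\ell+1)$, this gives the needed lower bound on the denominator and hence the $O(1/h)$ estimate for $|B_2|$ and $|B_3|$ there. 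Note also that the paper actually splits the ``$\ell<we$'' side into two sub-cases (truly small $\ell+1<w/2$, handled directly via $|wh^{(1)}_{\ell+1}-(\ell+1)h^{(1)}_\ell|\ge|wh^{(1)}_{\ell+1}|/2$, and the transition strip $\ell/e<w<2(\ell+1)$ handled by the Nicholson trick); without this sub-split, none of the bounds you invoke is uniform over the whole range $\ell<we$. Your proposal is structurally on target but would need to import this monotonicity argument (or an equivalent turning-point estimate) to close.
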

\begin{proof}
 We have from the Appendix, equation \eqref{lob1}, that the first bound in \eqref{Bbound} holds. By which we mean, this is directly seen from the fact the absolute value of the spherical Hankel function is a decreasing function of the $r$ variable.  We use \eqref{recurrence} to obtain that 
\begin{equation}\label{der}
\frac{\der}{\der w}h^{(1)}_{\ell}(w)=h^{(1)}_{\ell-1}(w)-\left(\frac{\ell+1}{w}\right)h_{\ell}^{(1)}(w)=
-h^{(1)}_{\ell+1}(w)+\left(\frac{\ell}{w}\right)h_{\ell}^{(1)}(w)
\end{equation}
along with change of variables to get the second bound in \eqref{Bbound} combined with \eqref{lob1}.

For the terms in \eqref{Bbound2}, we start by recalling the following identities for the derivatives of the spherical Hankel functions in dimension 3. 
Using \eqref{der} we obtain 
\begin{equation}\label{1a}
\frac{\der}{\der w}(w h_{\ell}^{(1)}(w))=
 wh^{(1)}_{\ell-1}(w)-\ell h_{\ell}^{(1)}(w) =-wh_{\ell+1}^{(1)}(w)+(\ell+1)h_{\ell}^{(1)}(w)
\end{equation}
and using the equation for the spherical Hankel function itself 
\begin{align}\label{1b}
\frac{\der^2}{\der w^2}(w h_{\ell}^{(1)}(w))=-wh^{(1)}_{\ell}(w)+\frac{\ell(\ell+1)}{w}h_{\ell}^{(1)}(w). 
\end{align}
The numerators can then be bounded by using \eqref{1a} iteratively. 
If 
\begin{align}
\ell+1<\frac{\lambda \rho}{2h} \quad \lambda\in \mathrm{supp}(\varphi)
\end{align} 
 then we get for $w=\frac{\lambda \rho}{h}$
\begin{align}
|wh_{\ell+1}^{(1)}(w)-(\ell+1)h_{\ell}^{(1)}(w)| \geq |wh_{\ell+1}^{(1)}(w)|-(\ell+1)|h_{\ell}^{(1)}(w)|\geq \frac{|wh_{\ell+1}^{(1)}(w)|}{2}.
\end{align} 
We have that for $\ell+1< \frac{\lambda \rho}{2h}$
\begin{equation}
\label{B2eqn1}
|B_{2,\ell,\frac{\lambda}{h}}(\rho,r)|\leq \frac{4}{|\frac{\lambda\rho}{h}h_{\ell+1}^{(1)}(\frac{\lambda\rho}{h})|}\left|\frac{\lambda}{h}h_{\ell+1}^{(1)}\left(\frac{\lambda r}{h}\right)\right|\leq 4.
\end{equation} 
Now to derive the first inequality in \eqref{Bbound2} we have to continue to look for a lower bound for 
\begin{align}
|wh^{(1)}_{\ell-1}(w)-\ell h_{\ell}^{(1)}(w)|^{-1}.
\end{align}
in terms of various orders of $\ell$ with respect to $w$. If $ew<\ell$, then 
\begin{equation}
\label{B2eqn2}
|wh^{(1)}_{\ell-1}(w)-\ell h_{\ell}^{(1)}(w)|\geq 
\left(\frac{\ell}{w}-1\right)|wh_{\ell}^{(1)}(w)|\geq(e-1)|wh_{\ell}^{(1)}(w)|
\end{equation}
and we get a bound of 
\begin{align}
|wh^{(1)}_{\ell-1}(w)-\ell h_{\ell}^{(1)}(w)|^{-1}\leq \frac{1}{e-1}\left|wh_{\ell}^{(1)}(w)\right|^{-1}.
\end{align}
This bound then results in 
\begin{align}
|B_{2,\ell,\frac{\lambda}{h}}(\rho,r)|\leq \frac{\ell h+\lambda}{\lambda \rho(e-1)}.
\end{align}
The difficult region is then when $\frac{\ell}{e}<w<2(\ell+1)$ whenever $w^{-1}$ is sufficiently small, e.g. $w=\frac{\lambda \rho}{h}$. 
Using equation \eqref{lob1} from the Appendix section \ref{Abf} we have that 
\begin{equation*}
2\left|\frac{d}{dw}(wh_\ell^{(1)}(w))\right||wh_\ell^{(1)}(w)|\geq \left|\frac{\der}{\der w}\left(\sum\limits_{k=0}^\ell\frac{s_k(\ell+\frac{1}{2})}{w^{2k}}\right)\right|
\geq \sum\limits_{k=0}^\ell\frac{2ks_k(\ell+\frac{1}{2})}{w^{2k+1}}
\geq \frac{|wh_\ell^{(1)}(w)|^2-1}{w},
\end{equation*}
which implies
\begin{equation*}
\left|\left(\frac{d}{dw}(wh_\ell^{(1)}(w))\right)\right|^{-1}\leq \frac{2w|wh_\ell^{(1)}(w)|}{|wh_\ell^{(1)}(w)|^2-1}
\end{equation*}
with $w=\frac{\lambda \rho}{h}$. We also have for $\ell<\frac{\lambda \rho e}{h}$ from the identity \eqref{1a}
\begin{align}
\left|\frac{d}{dw}\left(wh_{\ell}^{(1)}(w)\right)|_{w=\frac{\lambda r}{h}}\right|\leq 4\left|\frac{\lambda\rho}{h}h_{\ell}^{(1)}\left(\frac{\lambda \rho}{h}\right)\right|.
\end{align}
This gives 
\begin{align*}
&|B_{2,\ell,\frac{\lambda}{h}}(\rho,r)|\leq 
\left.\left(\frac{2w|wh_{\ell}^{(1)}(w)|}{r(|wh_{\ell}^{(1)}(w)|^2-1)}\right)\right|_{w=\frac{\lambda \rho}{h}}\left|\frac{d}{d\tilde{x}}(\tilde{x}h_{\ell}^{(1)}(\tilde{x}))|_{\tilde{x}=\frac{\lambda r}{h}}\right|\leq \\&
\left.\left(\frac{8w|wh_{\ell}^{(1)}(w)|^2}{|wh_{\ell}^{(1)}(w)|^2-1}\right)\right|_{w=\frac{\lambda \rho}{h}}=
8w\left.\left(1+\frac{1}{|wh _\ell^{(1)}(w)|^2-1}\right)\right|_{w=\frac{\lambda \rho}{h}}.
\end{align*}
Because we have that 
\begin{align*}
\left|wh_\ell^{(1)}\left(w\right)\right|^2\geq 1+\frac{\ell(\ell+1)}{w^2}\geq 1+\frac{\ell}{2w}\geq 1+\frac{1}{4}-\frac{1}{2w},
\end{align*} 
it follows that if $h\leq \frac{1}{4}$ we have $|B_{2,\ell,\frac{\lambda}{h}}(\rho,r)|\le Ch^{-1}$.
This bound is certainly not optimal. The derivative with respect to $\lambda$ of $B_{2,\ell,\frac{\lambda}{h}}$ follows by using \eqref{1b} and the previously established lower bounds. The bound for $|B_{2,\ell,\frac{\lambda}{h}}(\rho,r)|$ then follows. 

Similar to the proof of \eqref{B2eqn1}, we have that for $\ell+1< \frac{\lambda \rho}{2h}$
\begin{equation*}
|B_{3,\ell,\frac{\lambda}{h}}(\rho,r)|\leq \frac{2\ell}{r}\frac{h^{(1)}_\ell(\frac{\lambda r}{h})}{|\frac{\lambda\rho}{h}h_{\ell+1}^{(1)}(\frac{\lambda\rho}{h})|}\leq 2.
\end{equation*} 
For $ew<\ell$, the same estimate of \eqref{B2eqn2} gives
\begin{equation*}
|B_{3,\ell,\frac{\lambda}{h}}(\rho,r)|\leq \frac{2\ell}{r}\frac{h^{(1)}_\ell(\frac{\lambda r}{h})}{|\frac{\lambda\rho}{h}h_{\ell}^{(1)}(\frac{\lambda\rho}{h})|}\leq \frac{2\ell h}{\lambda}.
\end{equation*}
When $\frac{\ell}{e}<w<2(\ell+1)$, one has
\begin{equation*}
|B_{3,\ell,\frac{\lambda}{h}}(\rho,r)|\leq \frac{2\ell}{r}\left.\left(\frac{2|wh_{\ell}^{(1)}(w)|^2}{r(|wh_{\ell}^{(1)}(w)|^2-1)}\right)\right|_{w=\frac{\lambda \rho}{h}}\leq \frac{C}{h}.
\end{equation*}
The bounds derivative of $B_{3,\ell,\frac{\lambda}{h}}$ follow similarly. The impediment to obtaining a better estimate in \eqref{Bbound2} is the lack of precise asymptotics for the Hankel function derivatives in this setting.
\end{proof}
  
\begin{proof}[Proof of Theorem \ref{spheretheorem2}]
We start with the most difficult terms occurring in the kernel to analyze 
\begin{align}\label{impcross}
\sum\limits_{n} \int\limits_0^{\infty}\varphi(h\lambda)e^{\rmi t\lambda} \tilde{h}^{(1)}_{\lambda}(A_{\lambda}^tY_n)(r,\theta)\overline{\tilde{h}^{(1)}_{\lambda}(A_{\lambda}^tY_n)(r',\theta')}\,\der\lambda.
\end{align}
By symmetry bounds for the other terms in the kernel
will follow almost immediately, once we have obtained a bound for the kernel \eqref{impcross}. 
%

From \eqref{Atan} when $Y_{n}=\Psi_{1,\ell,m}$ we have that 
\begin{align}\label{outmove}
&\tilde{h}^{(1)}_{\lambda}(A_{\lambda}^tY_n)(r,\theta)= \lambda (A_{\lambda}^t)_{1,\ell,m}\Psi_{1,\ell,m}(\theta)h_{\ell}^{(1)}(\lambda r)(-\rmi)^{\ell}\\
&=2\lambda\Psi_{1,\ell,m}(\theta)B_{1,\ell,\lambda}(\rho,r)j_{\ell}(\lambda \rho)(-\rmi)^{\ell}=\tilde{j}_{\lambda}(B_{1,\lambda}Y_n)(\rho,\theta). 
 \nonumber
\end{align}
By the Cauchy-Schwarz inequality along with \eqref{Bbound} and \eqref{outmove} we obtain when $Y_{n}=\Psi_{1,\ell,m}$ 
\begin{align}\label{diag}
&\left|\int\limits_0^{\infty}\varphi(h\lambda)e^{\rmi t\lambda} \tilde{h}^{(1)}_{\lambda}(A_{\lambda}^tY_n)(r,\theta)\overline{\tilde{h}^{(1)}_{\lambda}(A_{\lambda}^tY_n)(r',\theta')}\,\der\lambda \right|\leq \nonumber \\&
\frac{1}{2}\left|\int\limits_0^{\infty}\varphi(h\lambda)|\tilde{h}^{(1)}_{\lambda}(A_{\lambda}^tY_n)(r,\theta)|^2+|\overline{\tilde{h}^{(1)}_{\lambda}(A_{\lambda}^tY_n)(r',\theta')}|^2\,\der\lambda \right|\leq \nonumber \\&
 \nonumber \frac{1}{2}\int\limits_0^{\infty}\varphi(h\lambda)\lambda^{2} \left(|B_{1,\ell,\lambda}(\rho,r)|^2+|B_{1,\ell,\lambda}(\rho,r')|^2 \right)  \left|Y_{n}(\theta)j_{\ell}(\lambda\rho)(-\rmi)^{\ell}\right|^2\,\der\lambda\leq \nonumber \\&
\int\limits_0^{\infty}\varphi(h\lambda)\lambda^{2}\left|\Psi_{1,\ell,m}(\theta)j_{\ell}(\lambda\rho)(-\rmi)^{\ell}\right|^2\,\der\lambda. 
\end{align}
Let $K^{\Delta_{free}}(y,y',t,h)$ denote the kernel of the free space Maxwell propagator: $\varphi(hD_t)e^{it\sqrt{\Delta_{free}}}$ on divergence free forms. Elementary arguments dictate this has the same $L^{1}\rightarrow L^{\infty}$ operator bound as the free space Dirichlet kernel, c.f. \cite{BCDbook}.  

Summing over $n=(1,\ell,m)$, it follows that \eqref{diag} is bounded by 
\begin{align}
\|K^{\Delta_{free}}(\cdot,\cdot,t,h)\|_{L^{\infty}}\leq \left|K^{\Delta_{free}}(y,y,0,h)|_{|y|=\rho}\right|\leq \frac{C}{h^3}
\end{align}
where the subscript $1$ denotes only the kernel with $\Psi_{1,\ell,m}$ present. The cross terms are bounded analogously. 

The case when $Y_{n}=\Psi_{2,\ell,m}$ first coefficient follows by replacing $B_{1,\ell,\lambda}$ with $B_{2,\ell,\lambda}$ and 
\begin{align}
j_{\ell}\left(\frac{\lambda \rho}{h}\right)\quad \mathrm{with} \quad \left.\frac{(z j_{\ell}(z))'}{z}\right|_{z=\frac{\lambda \rho}{h}}.
\end{align}
Unfortunately because of the different scales of $\ell$ in \eqref{Bbound2} we have to separate this into two cases, one with $\frac{\lambda \rho e}{h}\leq \ell$ and the other with $\frac{\lambda \rho e}{h}\geq \ell$. The first case follows from the previous Cauchy-Schwarz estimate and the first equation in \eqref{Bbound2}. We flip the sums and the integrals where necessary due to Fubini's theorem and use Proposition \ref{StoneFY} combined with Proposition \ref{cvim}. Let $\chi_R$ be a $C^{\infty}(M)$ function which is 1 on a ball of radius $2R$ and $0$ outside a ball of radius $3R$. Indeed, we have that writing $\lambda_0=\frac{\lambda}{h}$ for space reasons: 
\begin{align}\label{diag2}
&\frac{1}{h}\left|\int\limits_0^{\infty}\sum\limits_{\substack{n,\\ \ell\leq \lambda_0 \rho e}}\chi_{R}(r)\chi_R(r')\varphi(\lambda)e^{\rmi t\lambda_0} \tilde{h}^{(1)}_{\lambda_0}(A_{\lambda_0,Y}^tY_n)(r,\theta)\overline{\tilde{h}^{(1)}_{\lambda_0}(A_{\lambda_0,Y}^tY_n)(r',\theta')}\,\der\lambda \right|\leq \nonumber \\&
\frac{1}{2h}\left|\int\limits_0^{\infty}\sum\limits_{\substack{n,\\ \ell\leq \lambda_0\rho e}}\chi_{R}(r)\chi_R(r')\varphi(\lambda)\left(|\tilde{h}^{(1)}_{\lambda_0}(A_{\lambda_0,Y}^tY_n)(r,\theta)|^2+|\overline{\tilde{h}^{(1)}_{\lambda_0}(A_{\lambda_0,Y}^tY_n)(r',\theta')}|^2\right)\,\der\lambda \right|.
\end{align}
For the first terms in \eqref{diag2} we have that 
\begin{align}
&\frac{1}{2h}\int\limits_0^{\infty}\sum\limits_{\substack{\ell,m,\\ \ell\leq \lambda_0 \rho e}} \chi_{R}(r)\chi_R(r')\varphi(\lambda)\\&\times \left(|B_{2,\ell,\lambda_0}(\rho,r)|^2+|B_{2,\ell,\lambda_0}(\rho,r')|^2 \right)\nonumber
 \left|\Psi_{2,\ell,m}(\theta)\frac{(zj_\ell)'}{\rho}|_{z=\lambda_0\rho}(-\rmi)^{\ell-1}\right|^2\,\der\lambda\nonumber \\
\leq &\frac{C}{h^3}\int\limits_0^{\infty}\sum\limits_{\substack{\ell,m,\\ \ell\leq \lambda_0\rho e}} \chi_{R}(r)\chi_R(r')\varphi(\lambda)\left|\Psi_{2,\ell,m}(\theta)\frac{(zj_\ell)'}{\rho}|_{z=\lambda_0\rho}(-\rmi)^{\ell-1}\right|^2\,\der\lambda \nonumber \\
\leq& \frac{C}{h^3}\int\limits_0^{\infty}\sum\limits_{\substack{\ell,m,\\ \ell\leq \lambda_0\rho e}}
\chi_{R}(r)\chi_R(r')\varphi(\lambda)\left|\Psi_{2,\ell,m}(\theta)\frac{zj_{\ell-1}(z)-\ell j_{\ell}(z)}{\rho}|_{z=\lambda_0\rho}(-\rmi)^{\ell-1}\right|^2\,\der\lambda \nonumber \\
\leq&  
\frac{|K^{\Delta_{free}}(y,y,0,h)|_{|y|=\rho}|}{h^2}\leq \frac{C}{h^5}.
\end{align}
For the second terms we obtain 
\begin{align}
&\frac{1}{2h}\int\limits_0^{\infty}\sum\limits_{\substack{\ell,m,\\ \ell\leq \lambda_0 \rho e}} \chi_{R}(r)\chi_R(r')\varphi(\lambda)\\& \nonumber \times \left(|B_{3,\ell,\lambda_0}(\rho,r)|^2+|B_{3,\ell,\lambda_0}(\rho,r')|^2 \right)\nonumber
 \left|\Psi_{3,\ell,m}(\theta)(zj_\ell)'|_{z=\lambda_0 \rho}(-\rmi)^{\ell-1}
\right|^2\,\der\lambda \nonumber \\
\leq &\frac{C}{h^3}\int\limits_0^{\infty}\sum\limits_{\substack{\ell,m,\\ \ell\leq \lambda_0\rho e}} \chi_{R}(r)\chi_R(r')\varphi(\lambda)\left|\Psi_{3,\ell,m}(\theta)(zj_\ell)'|_{z=\lambda_0 \rho}(-\rmi)^{\ell-1} \nonumber
\right|^2\,\der\lambda  \\
\nonumber \leq& \frac{1}{h^2}\|K^{\Delta_{free}}(y,y,0,h)|_{|y|=\rho}\|_{L^{\infty}}\leq \frac{C}{h^5}. \nonumber
\end{align}
For the higher order $\ell$ we have that 
\begin{align}\label{onebig}
&\left|J_{\ell}\left(\frac{\lambda \rho}{h}\right)\right|\leq \frac{1}{\sqrt{2\pi\ell}}\left(\frac{\lambda \rho e}{2h\ell}\right)^{\ell}
&\text{and}&
&\left|j_{\ell}\left(\frac{\lambda \rho}{h}\right)\right|\leq C\ell^{-1}\left(\frac{\lambda \rho e}{2h\ell}\right)^{\ell}.
\end{align}
Similarly,
\begin{align}\label{twobig}
\left|H^{(1)}_{\ell}\left(\frac{\lambda r}{h}\right)\right|\leq 2\sqrt{\frac{2}{\pi\ell}}\left(\frac{\lambda r e}{2h\ell}\right)^{-\ell}.
\end{align}
We then examine the first terms in 
\begin{align}\label{diag3}
&\frac{1}{h}\left|\int\limits_0^{\infty}\sum\limits_{\substack{n,\\ \ell> \lambda_0 \rho e}}\varphi(\lambda)e^{\rmi t\lambda_0} \tilde{h}^{(1)}_{\lambda_0}(A_{\lambda_0,Y}^tY_n)(r,\theta)\overline{\tilde{h}^{(1)}_{\lambda_0}(A_{\lambda_0,Y}^tY_n)(r',\theta')}\,\der\lambda \right|.
\end{align}
We then have from Cauchy-Schwarz and \eqref{Bbound2}
\begin{align*}
&\frac{1}{h}\int\limits_0^{\infty}\sum\limits_{\substack{(\ell,m),\\ \ell>\lambda_0\rho e}}\chi_{R}(r)\chi_R(r')\varphi(\lambda)\left(\frac{\ell h}{\lambda}+1\right)^2\left|\Psi_{2,\ell,m}(\theta)\frac{(zj_{\ell})'}{\rho}|_{z=\lambda_0\rho}(-\rmi)^{\ell}\right|^2\,\der\lambda 
\\
\nonumber\leq & 
Ch\int\limits_0^{\infty}\sum\limits_{\substack{(\ell,m),\\ \ell>\lambda_0\rho e}}\chi_{R}(r)\chi_R(r')\varphi(\lambda)\ell^3\left|\left.\left(zj_\ell(z)-\ell j_\ell(z)\right)\right|_{z=\lambda_0\rho}\right|^2\,\der\lambda 
\\
\nonumber\leq & 
\frac{C}{h}\int\limits_0^{\infty}\chi_{R}(r)\chi_R(r')\varphi(\lambda)\sum\limits_{\ell\ge 1}\frac{\ell^5}{4^\ell}\,\der\lambda\le \frac{C}{h}\chi_{R}(r)\chi_R(r'),
\end{align*}
where the above constants only depend on $\rho$ and $\phi$. In the second bound we used $\|Y_{n}\|_{L^{\infty}}\leq C\sqrt{\ell}$ since as remarked earlier $Y_{n}$ can have degree $\ell-1,\ell$ or $\ell+1$. In the last bound, we have used \eqref{onebig}. Using equation \eqref{1a}, this allows us to conclude a bound of $Ch^{-5}$ because of the extra factor of $\ell$ in the first equation of \eqref{Bbound2}. Notice that $\ell$ starts at $1$ for the Maxwell equations. The bound for the second terms in \eqref{diag2} and \eqref{diag3} involving $\Psi_{3,\ell,m}$ follows analogously. 

For the case $t>h$ we separate our investigation into three cases.  
\begin{enumerate} 
\item Case $|\lambda r|<h$, this case is empty by assumption on $a$, in the main theorem- $\rho a\geq 2$ 
\item Case $|\lambda r|>h$, we consider $\ell<\frac{\lambda \rho e}{h}$, where we have assumed $r\geq r'$ without loss of generality. Let the subscript $-$ denote the restriction to this case.  
We examine the difficult terms of the form
\begin{align}\label{cross1}
K_{-}(y,y',t,h)=&\frac{\chi_{R}(r)\chi_R(r')}{2\pi h} \\
& \nonumber \times \int_0^\infty \sum\limits_{n_{-}} \varphi(\lambda)e^{\rmi \frac{\lambda}{h} t} \left(\tilde{h}_{\frac{\lambda}{h}}^{(2)}\left(Y_n\right)(r,\theta)\right)\left(\tilde{h}^{(1)}_{\frac{\lambda}{h}}\left(S_{\frac{\lambda}{h}}^tY_n\right)(r',\theta')\right)^*\der \lambda.
\end{align}
The extra decay comes from integrating by parts once. We are interested in the $L^\infty$-bound of $\left|\frac{t}{h} K_{-}\right|$. Note that $\frac{t}{h}e^{\rmi \frac{\lambda}{h} t}$ can be replaced by $-\rmi\frac{\der}{\der \lambda}e^{\rmi \frac{\lambda}{h} t}$ in the integrand of \eqref{cross1}. The derivative of the spherical Bessel function follows the same relationship as \eqref{der}. We have that 
\begin{equation}\label{dervbig}
\frac{\der}{\der\lambda}\left(h^{(1)}_{\ell}\left(\frac{\lambda r}{h}\right)A_{\frac{\lambda}{h},Y,\ell}\right)=
\frac{\der}{\der\lambda}B_{1,\ell,\frac{\lambda}{h}}(\rho,r)\left(j_{\ell}\left(\frac{\lambda \rho}{h}\right)\right)+B_{1,\ell,\frac{\lambda}{h}}(\rho,r)\frac{\der}{\der\lambda}j_{\ell}\left(\frac{\lambda r}{h}\right)
\end{equation}
When integrating by parts once we can then combine \eqref{dervbig}, and the Cauchy Schwarz inequality like the proof of \eqref{diag} where we use the bounds in Lemma \ref{matrixbound} and the prefactor from the terms in \eqref{der} for the second term times $B_{\ell}$ to obtain
\begin{align*}
\left|\frac{t}{h} K_{1,-}(y,y',t,\rho,h)\right|\leq CR\left|\frac{t}{h} K^{\Delta_{free}}(y,y,t,\rho,h)\right|\leq \frac{CR}{h^4}
\end{align*}
whenever $Y_{n}=\Psi_{1,\ell,m}$, in the kernel $K_{-}$. Similarly from Lemma \ref{matrixbound}, replacing $\frac{\der}{\der\lambda}B_{1,\ell,\lambda}$ with $\frac{\der}{\der\lambda}B_{2,\ell,\lambda}$ or $\frac{\der}{\der\lambda}B_{3,\ell,\lambda}$, the proof of the bound for the other polarization directions follow analogously. 
\item Case $|\lambda r|>h$, without loss of generality assume $r\geq r'$ and $\ell>\frac{\lambda\rho e}{2h}$. Let the subscript $+$ denote the restriction to this case. We have the large term estimates \eqref{onebig} and \eqref{twobig} as before. 
Then we obtain a bound for the case of $Y_{n}=\Psi_{1,\ell,m}$ only using the bound on $B_{1,\ell,\lambda_0}$ in \eqref{Bbound}. That is the sum over $\ell>\frac{\lambda\rho e}{2h}$ of \eqref{diag} multiplied by $\frac{t}{h}$ can be bounded by
\begin{align*}
&\left|\frac{t}{h} K_{1,+}(y,y',t,\rho,h)\right|
\\
\le& \frac{C\chi_{R}(r)\chi_R(r')}{h^4}\int\limits_0^{\infty}\sum_{\ell>\frac{\lambda\rho e}{2h}}\varphi(\lambda)\lambda^{2}\max\{r,(\ell+1)h\}\left|\Psi_{1,\ell,m}(\theta)j_{\ell}(\lambda_0\rho)(-\rmi)^{\ell}\right|^2\,\der\lambda \\
\le& \frac{CR}{h^4},
\end{align*}
this gives the bound of $K_1$ for \eqref{impcross} when $Y_{n}=\Psi_{1,\ell,m}$. The rest of the cross terms from \eqref{maxkernel} follow similarly using Cauchy-Schwarz. For the second case when $Y_{n}=\Psi_{2,\ell,m}$, we have that in this case the first terms in 
\begin{align*}
&\frac{1}{h}\int\limits_0^{\infty}\sum\limits_{\substack{(\ell,m),\\ \ell>\lambda_0\rho e}}\chi_{R}(r)\chi_R(r')\varphi(\lambda)\left(\frac{\ell(\ell+1)h}{\lambda}+1\right)^2\left|\Psi_{2,\ell,m}(\theta)\frac{(zj_{\ell})'}{\rho}|_{z=\lambda_0\rho}(-\rmi)^{\ell}\right|^2\,\der\lambda 
\\
&\le \frac{C}{h}\int\limits_0^{\infty}\chi_{R}(r)\chi_R(r')\varphi(\lambda)\sum\limits_{\ell\ge 1}\frac{\ell^7}{4^\ell}\,\der\lambda\le \frac{C}{h}\chi_{R}(r)\chi_R(r').
\end{align*}
\end{enumerate}
Combining the three cases and summing over all three components of the kernel, one with $\Psi_{1,\ell,m}$, one with $\Psi_{2,\ell,m}$ and one with $\Psi_{3,\ell,m}$, with the realization the VSH are orthogonal to each other on the sphere, there exists a constant $C$ so that for any $f\in C_0^{\infty}(M_{3R},(M_{3R})^3)$ which is divergence free 
\begin{align}
\|\chi_{R}(r)\chi(r\leq r')K^{\mathcal{O},m}(y,y',\rho,h)f\|_{L^1}\leq C\|f\|_{L^1}\left(h^{-5}\min \left\{1,\frac{R}{t}\right\}\right).
\end{align}
The monotone convergence theorem implies the desired result for the dense class of vector inputs. By density the result holds for $f\in H^1(M_{3R},(M_{3R})^3)$, with $f$ in the domain of the operator as described in the statement of the main theorem. 
\end{proof}

\section{Finite Speed of Propagation}\label{finitespeed} 

We show how to improve our result to a more generic one, Corollary \ref{indepcor}. Assume we have generic data $f\in H^1_{\mathrm{comp}}(M;M^3)$ which is divergence free. We will show that initial data of this form evolves according to three tractable scenarios. 

\begin{itemize}
\item \textbf{Scenario 1:}
Let $\mathcal{O}$ be a ball in 3 dimensions of radius $\rho$. Let $f=f_1\in H^1(M_{3\rho};(M_{3\rho})^3)$. As a result of \cite{melrosescattering}, Theorem 1.6 we only need to prove dispersion for the case $t\leq 6\rho$ (see Scenario 2), which is covered by Theorem \ref{spheretheorem2} with $3R=9\rho$.
\item \textbf{Scenario 2:} We have $f=f_1\in H^1(M_{3\rho};(M_{3\rho})^3)$ and $t> 6\rho$. Here the free space propagator and the obstacle propagator differ only by an error which is exponentially decaying in time \cite{LI}, Lemma 2.6. We review this result here since it needs a small modification to work in the Maxwell scenario.  Let $\Delta$ be the Dirichlet Laplacian in the exterior of the ball $\mathcal{O}$ of radius $\rho$ in three dimensions. We let $u(t,x)$ denote the solution to the Cauchy problem for the wave equation in $M=\mathbb{R}^3\setminus\mathcal{O}$:
\begin{align}\label{cauchy}
&\partial_t^2u-\Delta u=0 \\& \nonumber
u(0,x)=U_0(x)\\& \nonumber
\partial_tu(0,x)=U_1(x)
\end{align}
which is well posed in in $L^1([0,T]; H^1(M))\cap H^1([0,T]; L^2(M))$ for $H^1_0(M)$ Cauchy data. We recall the following definition. Let $T_{R_e}$ be the escape time, that is the time at which there is no geodesic of length $T_{R_e}$ lying entirely within the radius $R_e$ of the obstacle. For smooth Cauchy data we have the following celebrated theorem 
\begin{theorem}[subset of \cite{melrosescattering}, Theorem 1.6]
Consider the Dirichlet laplacian, $\Delta$ in $M$, the exterior of $\mathcal{O}$ the ball of radius $\rho$ in 3 dimensions. Then there exists a sequence $\lambda_j\in \mathbb{C}$, $\mathrm{Im}(\lambda_j)<0$ with $\lim\limits_{j\rightarrow\infty}\Im(\lambda_j))=-\infty$ and associated generalized eigenspaces $W_j\subset C^{\infty}(M)$ of dimensions $m_j<\infty$ such that 
\begin{enumerate}
\item $v\in W_j\Rightarrow v|_{\partial\mathcal{O}}=0$
\item $(\Delta+\lambda_j^2)W_j\subset W_j$, 
\item If $(U_0,U_1)$ are supported in $\{x: |x|\leq R\}$ then there exists $v_{j,k}\in W_j$ such that  for all $\epsilon>0$ and $N\in \mathbb{N}$ and a multi-index $\alpha$ and some constant $C=C(R,N,\epsilon,\alpha)$ the solution to the Cauchy problem for the wave equation \eqref{cauchy} satisfies for $t>T_{R_e}$
\begin{align}
\sup\limits_{\{|x|\leq R_e\}}\left |D_{(t,x)}^{\alpha}\left(u(t,x)-\sum\limits_{j=1}^{N-1}e^{-i\lambda_jt}\sum\limits_{k=0}^{m_j-1}t^kv_{j,k}(x)\right)\right|\leq Ce^{-(t-T_{R_e})(|\mathrm{Im}\lambda_N|-\epsilon)}.
\end{align} 
\end{enumerate}
\end{theorem}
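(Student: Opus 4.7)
The plan is to establish the resonance expansion via a contour deformation of the spectral representation of the wave propagator, exploiting the fact that $\calO$ is a ball so that the relevant resolvent is given explicitly by separation of variables. First I would write the solution to \eqref{cauchy} using Stone's formula for the Dirichlet Laplacian as
\begin{align*}
u(t,x) = \cos(t\sqrt{\Delta})U_0(x) + \frac{\sin(t\sqrt{\Delta})}{\sqrt{\Delta}}U_1(x),
\end{align*}
and re-express each propagator as a contour integral of the form
\begin{align*}
\int_0^\infty \lambda\, e^{-\rmi\lambda t}(R_\lambda - R_{-\lambda})\,\der\lambda
\end{align*}
applied to the data, where $R_\lambda = (\Delta - \lambda^2)^{-1}$ is the outgoing resolvent. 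The compact support of $(U_0,U_1)$ together with finite speed of propagation means that, for $|x|\le R_e$ and $t > T_{R_e}$, one may replace $R_\lambda$ by $\chi R_\lambda \chi$ for any cutoff $\chi \in C_0^\infty(M)$ equal to one on a ball containing both $\supp U_i$ and $\{|x|\le R_e\}$, incurring only a free-space contribution which, by strong Huygens' principle in odd dimension, vanishes identically once $t$ exceeds the diameter of $\supp\chi$.

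Next I would invoke the fact that, with $\chi$ as above, the cutoff resolvent $\chi R_\lambda \chi$ extends meromorphically from $\Im\lambda > 0$ to the entire complex plane, with poles located precisely at the resonances $\lambda_j$ and residues of finite rank whose ranges span the generalized eigenspaces $W_j$. For the ball these poles are exactly the zeros of $h^{(1)}_\ell(\lambda\rho)$ (encoding the Dirichlet condition on $\partial\calO$ after separation of variables), and classical asymptotics for zeros of Hankel functions locate them in the lower half plane with $\Im\lambda_j \to -\infty$. The heart of the argument is then a contour deformation: shift the integration contour from the real axis down to the line $\Im\lambda = -(|\Im\lambda_N| - \epsilon)$. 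The residues picked up along the way produce exactly the sum $\sum_{j=1}^{N-1} e^{-\rmi\lambda_j t}\sum_{k=0}^{m_j-1} t^k v_{j,k}(x)$, with the polynomial factors $t^k$ arising from higher-order poles at which $m_j$ exceeds one, and the boundary condition $v|_{\partial\calO}=0$ inherited from the residue computation applied to $\chi R_\lambda\chi$.

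The main obstacle will be proving a polynomial bound $\|\chi R_\lambda \chi\|_{H^s \to H^{s+2}} \le C\langle\lambda\rangle^N$ along the deformed contour, uniformly as $\Re\lambda \to \pm\infty$; this is what forces the integral along the deformed contour to decay like $e^{-t(|\Im\lambda_N|-\epsilon)}$ after integrating by parts in $\lambda$ enough times to dominate the polynomial growth. For the ball one has the luxury of the explicit Bessel and Hankel representation already exploited in Lemma \ref{matrixbound}, so this bound can be extracted directly from uniform asymptotics for $J_\ell$ and $H^{(1)}_\ell$ in the transition region $\ell \sim |\lambda|\rho$, rather than from the general non-trapping propagation estimates of Vainberg. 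The multi-index derivative bounds for $D^\alpha_{(t,x)}$ then follow by differentiating under the contour integral and combining elliptic regularity with the same polynomial resolvent bounds, which absorb the extra powers of $\lambda$ coming from $\partial_t^\alpha$ into the $e^{-(t-T_{R_e})(|\Im\lambda_N|-\epsilon)}$ envelope at the price of enlarging $C=C(R,N,\epsilon,\alpha)$.
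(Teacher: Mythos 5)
The paper does not prove this theorem itself; it is quoted verbatim as a subset of Melrose's Theorem~1.6 in \cite{melrosescattering}, so the comparison here is really against Melrose's original argument. Your contour-deformation route is a valid but genuinely different proof. Melrose works through the Lax--Phillips semigroup $Z(t)=P_+U(t)P_-$ and uses propagation of singularities to show that for a non-trapping obstacle $Z(t)$ is eventually smoothing, hence compact, on the Lax--Phillips energy space; this yields an eigenfunction expansion for the generator $B$ in which the $\lambda_j$ are the eigenvalues and the $t^k$ factors come from nilpotent Jordan blocks, with no resolvent estimates required. What you propose instead is the Vainberg / Tang--Zworski / Dyatlov--Zworski route: write $u$ via Stone's formula, push the contour into the lower half-plane, and read off the residue sum from the poles of the meromorphically continued cutoff resolvent. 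For the ball this is especially clean because $\chi R_\lambda\chi$ diagonalizes over spherical harmonics with Hankel-quotient entries, so the resonances are the zeros of $h^{(1)}_\ell(\lambda\rho)$ and your ``main obstacle'' of polynomial high-frequency bounds can indeed be extracted from uniform Bessel/Hankel asymptotics in the transition region $\ell\sim|\lambda|\rho$. Melrose's argument is more abstract and applies to arbitrary non-trapping obstacles; your approach is more concrete for the sphere and makes the decay rate $|\Im\lambda_N|-\epsilon$ transparent.

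Two details in your sketch would need firming up before this counts as a proof. First, the reduction from $R_\lambda$ to $\chi R_\lambda\chi$ should be set up as the cutoff wave group $\chi\,U(t)\,\chi$: you must track both pieces of $R_\lambda - \chi R_\lambda\chi = (1-\chi)R_\lambda\chi + R_\lambda(1-\chi)$ and use strong Huygens together with finite propagation speed on each; declaring ``only a free-space contribution'' skips this decomposition. Second, before deforming to $\Im\lambda=-(|\Im\lambda_N|-\epsilon)$ you need to verify that only finitely many resonances lie in that strip (so the residue sum is finite and the contour can be closed), and you need to supply the convergence of the deformed integral as $\Re\lambda\to\pm\infty$ by integrating by parts against the polynomial resolvent bound, which is what produces the $\alpha$-dependence of the constant. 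Both points are standard and handled for example in \cite{DZbook}, but they are not automatic from your outline and are exactly where the argument can fail for trapping obstacles.
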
 
The parametrix is for the wave equation in \cite{melrosescattering}. If we take $$\mathrm{curl}(x u(t,x))=(\nabla u)\times r\hat{r}$$ then this solves for the electric field in \eqref{system} with $i\lambda \mapsto \partial_t$ and $E|_{\partial\cal{O}}=\nu\times E=0$. We recall here that $\nabla u(t,\rho)=0$ by assumption that $u(t,x)$ solves \eqref{cauchy}. Taking this solution instead does not change the statement of the result of \cite{melrosescattering} Theorem 1.6 due to the presence of the derivative in $\alpha$. By density the result follows for $H_0(\mathrm{curl}, \mathrm{div}\, 0, M)$ fields which are $0$ outside the set $M_{3\rho}$.
\item \textbf{Scenario 3}:
The result of \cite{LP} Chapter 8 is that if our input data $f_2\in C_c^{\infty}(B_{3\rho}^c;(B_{3\rho}^c)^3)$ then the operators $\mathrm{cos}(t\Delta_{\free})-\mathrm{cos}(t\Delta_{\rel})$ coincide on these forms when $|x-\rho|>t$. This implies that they coincide for all $t<2\rho$ acting on this particular class of initial data. Their result is a manifestation of Huygen's principle. By density the result follows for $H(B_{3\rho}^c; (B_{3\rho}^c)^3)$ data which is divergence free. Notice then that when $\varphi(hD_t)$ is present $\varphi(hD_t)\mathrm{cos}(t\Delta_{rel})$ is also bounded as an operator on $H^1(B_{3\rho}^c,(B_{3\rho}^c)^3)$. 

\end{itemize}
We can then consider $f\in C_c^{\infty}(M;M^3)$ as the sum of data $f=f_1+f_2$. The missing region is then when the part of the initial data  $f_2\in H^1(M_{3\rho}^c;(M_{3\rho}^c)^3)$ and in time has evolved beyond $t>2\rho$.  We claim that this region is covered by using data from scenario 1, 2 and 3. Select $R=3\rho$, then since the obstacle is non-trapping the support of data enters the region described by scenario 1) when if $t_0$ our new starting time for evolution of the data is considered as $2\rho$ and those contributions of the support of this data which move into scenario 3) at times $t\geq 2\rho$ already disperse at the correct rate. Thus our dispersive estimate in Theorem \ref{spheretheorem2} can be extended to Corollary \ref{indepcor}. 
This is a generalization of a similar in spirit argument to \cite{LI} which they use to make their dispersive estimates independent of the source and observer and restrict their analysis to a very large ball. Better arguments about the asymptotics of the Bessel and Hankel functions can be used to improve the value of the constant which depends directly on $R$ in Theorem \ref{spheretheorem2} and therefore the overall constant's geometric dependence on $\rho$ but it is not clear that this type of analysis will result in better powers of $h$ for the Maxwell operator. 

While is easy to construct compactly supported data in $H_0(\mathrm{curl},M_R)$, this may be the source of the extra powers of $h$, since this is the not the ideal space for the data according to the Hodge decomposition theorem. Balls with charge, multiple spheres like in \cite{PZ}, and other obstacles  and conductors represent future goals for geometric scattering theory.


\section{Appendix}
\subsection{Hankel Function Estimates} \label{AA}
We recall that in $3d$ we have 
\begin{align}\label{explicithankel1}
h_n^{(1)}(r)=(-\rmi)^{n+1}\frac{e^{\rmi r}}{r}\sum\limits_{m=0}^n\frac{\rmi^m}{m!(2r)^m}\frac{(n+m)!}{(n-m)!}
\end{align}
is the explicit form for the spherical Hankel function of degree $n$. Let $a_0(n)=1$ and define 
\begin{align}
a_{k}(n)=\frac{(4n^2-1)(4n^2-3^2). . . (4n^2-(2k-1)^2)}{k!8^k}=\frac{(\frac{1}{2}-n)_k(\frac{1}{2}+n)_k}{(-2)^kk!}.
\end{align}
From the Nist digital library \cite{olver2010nist} eqs. 10.17.13,14, and 15, we have that 
\begin{align}\label{eqn:Hankelerror}
{H^{(1)}_{n}}\left(z\right)=\left(\frac{2}{\pi z}\right)^{\frac{1}{2}}e^{
\rmi\omega}\left(\sum_{k=0}^{\ell-1}(\rmi)^{k}\frac{a_{k}(n)}{z^{k}}+R_{%
\ell}^{\pm}(n,z)\right)
\end{align}
where $\ell \in \N$, $\omega=z-\tfrac{1}{2}n\pi-\tfrac{1}{4}\pi$ and
\[\left|R_{\ell}^{\pm}(n,z)\right|\leq 2|a_{\ell}(n)|\mathcal{V}_{z,\pm \rmi
\infty}\left(t^{-\ell}\right)\*\exp\left(|n^{2}-\tfrac{1}{4}|\mathcal{V}_{z,
\pm \rmi\infty}\left(t^{-1}\right)\right),\]
where $\mathcal{V}_{z,\rmi\infty}\left(t^{-\ell}\right)$ may be estimated in various sectors as follows
\[\mathcal{V}_{z,\rmi\infty}\left(t^{-\ell}\right)\leq\begin{cases}|z|^{-\ell},&0%
\leq\operatorname{ph}z\leq\pi,\\
\chi(\ell)|z|^{-\ell},&\parbox[t]{224.037pt}{$-\tfrac{1}{2}\pi\leq%
\operatorname{ph}z\leq 0$ or
$\pi\leq\operatorname{ph}z\leq\tfrac{3}{2}\pi$,}\\
2\chi(\ell)|\Im z|^{-\ell},&\parbox[t]{224.037pt}{$-\pi<\operatorname{ph}z\leq%
-\tfrac{1}{2}\pi$ or
$\tfrac{3}{2}\pi\leq\operatorname{ph}z<2\pi$.}\end{cases}\]
Here, $\chi(\ell)$ is defined by
\begin{align*}
 \chi(x) := \pi^{1/2} \Gamma\left(\tfrac{1}{2}x+1\right)/\Gamma \left(\tfrac{1}{2}x+\tfrac{1}{2}\right).
\end{align*}
We get the large order bounds from the definitions above and Stirling's formula. It follows that 
\begin{align}\label{lob}
|J_{n}(z)|\leq \frac{1}{\sqrt{2\pi n}}\left(\frac{ez}{2n}\right)^{n} \qquad 
|Y_{n}(z)|\leq \sqrt{\frac{2}{\pi n}}\left(\frac{ez}{2n}\right)^{-n}. 
\end{align} 

\subsection{Spherical Bessel functions}\label{Abf}
The spherical Bessel functions $j_\ell$ are usually defined as $j_\ell(x)=\sqrt{\frac{\pi}{2x}} J_{\ell + \frac{1}{2}}(x)$. These functions appear when separating variables for the three dimensional Helmholtz equation. We have from \cite{olver2010nist} that for real $z$
\begin{align}\label{lob1}
y_{n}^2(z)+j_{n}^2(z)=|h_{3,n}^{(1)}(z)|^2=\sum\limits_{k=0}^n\frac{(2k)!(n+k)!}{2^{2k}(k!)^2(n-k)!z^{2k+2}}=\sum\limits_{k=0}^n\frac{s_{k}(n+\frac{1}{2})}{z^{2k+2}}.
\end{align}
It follows directly from the above equality \eqref{lob1} that $$|h^{(1)}_{k}(z)|\leq|h^{(1)}_{n}(z)|$$ for all real $z$ and $k\leq n$. The equation which both the spherical Bessel and Hankel functions $w=j_{\ell}(z)$ or $w=h^{1,2}_{\ell}(z)$ satisfy is  
\begin{align}\label{eqdef}
z^2\frac{\der^2w}{\der z^2}+2z\frac{\der w}{\der z}+(z^2-\ell(\ell+1))w=0
\end{align} 
where $\ell$ is a non-negative integer. Furthermore if $f_n$ is either $h_n^{(1)}, h_n^{(2)}$ or $j_n$ then it satisfies the recurrence relation for the derivative with respect to input
\begin{align}\label{recurrence}
\frac{d}{dz}f_{n}(z)=f_{n-1}(z)-\left(\frac{n+1}{z}\right)f_{n}(z).
\end{align}

\subsection{Asymptotic expansions}\label{rellich}

Assume that $f\in C^{\infty}(M; \mathbb{R}^3)$, then if $\lambda\in \mathbb{R}$ is not zero, and $g=(\Delta_{\rel}-\lambda^2)f$ is compactly supported then $f$ is called outgoing for $\lambda$ whenever $f=R_{\lambda}\tilde f$ with $R_\lambda$ being the outgoing resolvent and $\tilde f$ is compactly supported. If $\chi$ is such that $f\chi\in C_0^{\infty}(M; \mathbb{R}^3)$ then $f$ is outgoing if and only if $(1-\chi f)$ is outgoing. This definition depends only on the behavior of $f$ at infinity. The notion is independent of the precise structure of the resolvent and is independent of any ball $B_{R_1}(0)=B_1$, with $R_1>1$ large enough so that $\mathcal{O}\subset \overline{B_{R_1}(0)}$. Therefore $f$ outgoing on $M$ is equivalent to $f|_{\mathbb{R}^3\setminus B_1}$ being outgoing on $\mathbb{R}^3$. This fact can be used to see that an outgoing $f$ has an asymptotic expansion
\begin{align}
f=\frac{e^{\rmi\lambda r}}{r}\Phi+\mathcal{O}\left(\frac{1}{r}\right) \quad r\rightarrow \infty
\end{align}
with $\Phi\in C^{\infty}(\mathbb{S}^{2}; \mathbb{C}^3)$ the restriction of an entire function on $\mathbb{C}^3$ to the sphere. The expansion can be differentiated in $r$. 

In this part we give a short proof of the formula for $g\in L^2(\mathbb{S}^2,\mathbb{C}^3)$
\begin{align}
\frac{1}{2\pi}\int\limits_{(\mathbb{S}^{2})^3}e^{-\rmi\lambda x\cdot\omega}g(\omega)\,\der\omega=2\sum\limits_{n}a_{n}\phi_{n}\left(\frac{x}{r}\right)j_{\ell_{n}}(\lambda r)(-\rmi)^{\ell_{n}}
\end{align}
which also holds with $\phi_{n}$ being replaced by $\Psi_{1,\ell,m}$ in dimension 3. We also have in dimension 3
\begin{align}
\frac{1}{(2\pi)}\int\limits_{(\mathbb{S}^{2})^3}e^{-\rmi\lambda x\cdot\omega}g(\omega)\,\der\omega=2\sum\limits_{\ell,m}a_{2,\ell,m}\Psi_{2,\ell,m}\left(\frac{x}{r}\right)\left.\frac{(yj_{\ell}(y))'}{y}\right|_{y=\lambda r} (-\rmi)^{\ell-1}.
\end{align}
We can expand using the identity
\begin{align}\label{Aa}
&\frac{\der}{\der x}(x j_{\ell}^{(1)}(x))=xj_{\ell-1}(x)-\ell j_{\ell}(x).
\end{align}
We obtain
\begin{align}\label{Ab}
\left.\frac{(zj_{\ell}(z))'}{z}\right|_{z=\lambda r}=j_{\ell-1}(\lambda r)-\frac{\ell}{\lambda r} j_{\ell}(\lambda r).
\end{align} 
Rellich's uniqueness theorem (Theorem 4.18 in \cite{DZbook}) gives that solutions for real $\lambda>0$ are determined uniquely by their asymptotic expansion in any dimension
\begin{align}
f(r\theta)=\frac{e^{\rmi\lambda r}}{r^{\frac{d-1}{2}}}g_{+}(\theta)+\frac{e^{-\rmi\lambda r}}{r^{\frac{d-1}{2}}}g_{-}(\theta)+\mathcal{O}\left(\frac{1}{r^{\frac{d+1}{2}}}\right).
\end{align}
Application of the stationary phase lemma gives generically in any dimension: 
\begin{equation*}
\int\limits_{(\mathbb{S}^{d-1})^3}e^{-\rmi\lambda x\cdot\omega}g(\omega)\,\der\omega= \left(\frac{2\pi}{\lambda r}\right)^{\frac{d-1}{2}}\left(e^{-\rmi \lambda r}e^{\frac{\rmi(d-1)\pi}{4}}g(\theta)+e^{\rmi\lambda r}e^{\frac{-\rmi(d-1)\pi}{4}}g(-\theta)\right) +\mathcal{O}\left((\lambda r)^{-\frac{d+1}{2}}\right)
\end{equation*}
as $\lambda r\rightarrow \infty$. Here the remainder terms depend on the norm $$\|g(\omega)\|_{H^2(\mathbb{S}^{2})^3}$$ by stationary phase whenever $|\lambda r|$ is large. These expansion terms can be differentiated term by term in $x$ and then expanded again using stationary phase to obtain the large $|\lambda r|$ terms. Comparing the spherical Bessel function terms allows to match the coefficients in the generalized Eigenfunction expansions, using equation \eqref{eqn:Hankelerror}. Notice that the terms associated to $\Psi_{3,\ell,m}$ are genuinely lower order in $r$ as $\lambda r\rightarrow \infty$.

We recall Proposition C.4 in \cite{OS}
\begin{prop}[subset of Prop. C.4 in \cite{OS}]\label{OSprop}
Let $\chi\in C^{\infty}(M)$ be supported in $\mathbb{R}^3\setminus B_1$ be such that $1-\chi\in C_0^{\infty}(M)$. Then $f\in C_0^{\infty}(M)$ is outgoing if and only if $\chi f$ is outgoing for the Laplace operator on $\mathbb{R}^3$. 
\end{prop}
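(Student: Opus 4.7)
The plan is to exploit that ``outgoing'' is a notion purely about behavior at infinity, where $\chi \equiv 1$, combined with Rellich's uniqueness theorem. The forward and reverse directions are parallel: in each case I commute the Helmholtz operator past the cutoff $\chi$, obtain a compactly supported source, and compare $\chi f$ (resp.\ $f$) with the appropriate resolvent applied to that source. The discrepancy solves a homogeneous Helmholtz equation and is outgoing, hence vanishes by Rellich uniqueness (e.g.\ Theorem 4.17 in \cite{DZbook}).

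For the forward direction, suppose $f = R_\lambda g$ with $g$ compactly supported on $M$. Since $\chi$ vanishes on $B_1 \supset \overline{\mathcal{O}}$, the product $\chi f$ extends by zero to a smooth function on $\mathbb{R}^3$. A direct calculation gives
\begin{align*}
(\Delta - \lambda^2)(\chi f) = \chi g + [\Delta, \chi] f =: \tilde{g},
\end{align*}
which is compactly supported because $[\Delta, \chi] = 2 \nabla\chi \cdot \nabla + \Delta \chi$ has coefficients supported where $\nabla \chi \ne 0$. Setting $u = \chi f - R_{\lambda,\free}\tilde{g}$, we have $(\Delta - \lambda^2) u = 0$ on all of $\mathbb{R}^3$. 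By construction $R_{\lambda,\free}\tilde{g}$ is outgoing on $\mathbb{R}^3$, and $\chi f$ agrees with $f$ at infinity, so it inherits the asymptotic $f = e^{\rmi\lambda r}\Phi/r + \mathcal{O}(1/r)$ guaranteed by the definition of outgoing on $M$. Both therefore satisfy the Sommerfeld radiation condition, so $u$ does too, and Rellich uniqueness forces $u \equiv 0$. Hence $\chi f = R_{\lambda,\free}\tilde{g}$ is outgoing for the free Laplacian.

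For the reverse direction, suppose $\chi f = R_{\lambda,\free} \tilde{g}$ for some compactly supported $\tilde{g}$ on $\mathbb{R}^3$. Writing $f = \chi f + (1-\chi) f$ on $M$ and using $(\Delta - \lambda^2)(\chi f) = \tilde{g}$ yields
\begin{align*}
(\Delta_\rel - \lambda^2) f = \tilde{g} - [\Delta, \chi] f + (\Delta - \lambda^2)\bigl((1-\chi) f\bigr) =: g,
\end{align*}
which is compactly supported since $1-\chi \in C_0^\infty(M)$ and $\nabla \chi$ has compact support. Set $v = f - R_\lambda g$. Then $v$ satisfies the homogeneous relative Helmholtz equation with the relative boundary conditions on $M$, and both $f$ (at infinity, via $\chi f$) and $R_\lambda g$ are outgoing, so $v$ is outgoing on $M$. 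Applying Rellich's theorem in the exterior region, which is valid since $\Delta_\rel$ coincides with the free Laplacian outside $B_1$, gives $v \equiv 0$, i.e.\ $f = R_\lambda g$ is outgoing on $M$.

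The main obstacle is verifying that the outgoing asymptotic on $M$, which only directly controls $f$ at infinity in the exterior region, translates into the Sommerfeld radiation condition on $\mathbb{R}^3$ needed to invoke Rellich uniqueness there. The fact that the expansion $e^{\rmi \lambda r}\Phi/r + \mathcal{O}(1/r)$ may be differentiated in $r$ is the essential ingredient: it yields $(\partial_r - \rmi \lambda)(\chi f) = \mathcal{O}(1/r^2)$ at infinity, which together with the compactly supported source $\tilde{g}$ suffices. Once that hand-off between the two notions of outgoing is made precise, both directions close symmetrically by the commutator-plus-Rellich argument.
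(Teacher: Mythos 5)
The paper does not actually prove this proposition: it is quoted directly as a subset of Prop.\ C.4 in \cite{OS}, and the surrounding paragraph merely asserts the content as a consequence of the definition of ``outgoing'' as $f = R_\lambda \tilde f$. Your commutator-cutoff-plus-Rellich argument is the standard way to make this rigorous, and it is sound in substance: in each direction you compare the function against the appropriate resolvent applied to the compactly supported source produced by commuting $\Delta-\lambda^2$ past $\chi$, and kill the discrepancy by Rellich uniqueness.

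Two small corrections. First, in the reverse direction your displayed formula for $g$ has a spurious $-[\Delta,\chi]f$ term: since $(\Delta-\lambda^2)(\chi f) + (\Delta-\lambda^2)\bigl((1-\chi)f\bigr) = (\Delta-\lambda^2)f$ with no extra commutator, the correct identity is simply $g = \tilde g + (\Delta-\lambda^2)\bigl((1-\chi)f\bigr)$. This is harmless for your argument, which only needs $g$ to be compactly supported; that holds either way because $1-\chi$ and $\nabla\chi$ have compact support and $\tilde g$ does by hypothesis. Second, after Rellich forces the discrepancy to vanish on the exterior of a large ball, you should add a word about unique continuation (the Helmholtz operator has analytic coefficients and $M$ is connected) to propagate the vanishing to all of $M$ in the reverse direction, and to all of $\mathbb{R}^3$ in the forward direction. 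You also implicitly use that $f$ lies in the domain of $\Delta_{\rel}$, i.e.\ satisfies the relative boundary conditions, which is needed in the reverse direction for $f-R_\lambda g$ to be a homogeneous solution of the relative problem; this is implicit in the paper's use of the proposition but worth stating. None of these gaps change the structure of the proof, which is correct and consistent with the cited reference.
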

We use this to show the following 
\begin{lem}\label{replacebase}
In the exterior of the ball of radius $\rho$, $f$ is outgoing for fixed $\lambda$ and divergence free then $f$ is of the form 
\begin{align}
\tilde{h}^{(1)}_{\lambda,Y}(Y).
\end{align}
\end{lem}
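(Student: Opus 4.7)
The plan is to expand $f$ in the orthonormal vector spherical harmonic basis $\{\Psi_{1,\ell,m},\Psi_{2,\ell,m},\Psi_{3,\ell,m}\}$ on each sphere of radius $r>\rho$ and to show that the Helmholtz equation, the outgoing condition, and the divergence-free constraint together force the radial coefficients into precisely the combination appearing in $\tilde{h}^{(1)}_{\lambda,Y}$.

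First I would invoke Proposition \ref{OSprop} to reduce to the setting where, after multiplication by a suitable cutoff $\chi\in C^{\infty}(M)$ supported outside $\overline{B_1}$, $\chi f$ is outgoing on $\mathbb{R}^3$ in the standard sense and hence carries the asymptotic $e^{\rmi\lambda r}/r$ tail described in Appendix \ref{rellich}. Since $f$ is divergence-free and the background is Euclidean, each component satisfies the scalar Helmholtz equation on $\mathbb{R}^3\setminus B_1$. Writing
\begin{align*}
f(r,\theta)=\sum_{\ell,m}\bigl(c_{1,\ell,m}(r)\Psi_{1,\ell,m}(\theta)+c_{2,\ell,m}(r)\Psi_{2,\ell,m}(\theta)+c_{3,\ell,m}(r)\Psi_{3,\ell,m}(\theta)\bigr),
\end{align*}
a direct computation in spherical coordinates verifies that $\Psi_{1,\ell,m}$ decouples from the other two VSH under both $\Delta$ and $\mathrm{div}$, whereas $(\Psi_{2,\ell,m},\Psi_{3,\ell,m})$ couple under $\Delta$ and are linked by the scalar relation $\mathrm{div}\,f=0$.

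Next I would solve the resulting radial ODEs. Each $c_{1,\ell,m}$ satisfies the spherical Bessel equation \eqref{eqdef} at index $\ell$, so it is a linear combination of $j_\ell(\lambda r)$ and $h_\ell^{(1)}(\lambda r)$; the outgoing tail condition rules out the $j_\ell$ piece and forces $c_{1,\ell,m}(r)=\alpha_{1,\ell,m}\lambda(-\rmi)^{\ell}h_\ell^{(1)}(\lambda r)$ for some constant $\alpha_{1,\ell,m}$. For the coupled pair, $\mathrm{div}\,f=0$ is one algebraic relation between $c_{2,\ell,m}$ and $c_{3,\ell,m}$ at each $r$; combining this with the Helmholtz equation reduces the system to a single scalar outgoing radial ODE whose solution must be $\alpha_{2,\ell,m}(-\rmi)^{\ell-1}h_\ell^{(1)}(\lambda r)$. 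Tracing the coupling back produces exactly the prefactors $(zh_\ell^{(1)}(z))'|_{z=\lambda r}/r$ on $\Psi_{2,\ell,m}$ and $\sqrt{\ell(\ell+1)}h_\ell^{(1)}(\lambda r)/r$ on $\Psi_{3,\ell,m}$ appearing in the definition of $\tilde{h}^{(1)}_{\lambda,Y}$.

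The main obstacle will be verifying the coupling calculation cleanly, namely that the only outgoing divergence-free combination of $\Psi_{2,\ell,m}$ and $\Psi_{3,\ell,m}$ is a scalar multiple of $\mathrm{curl}\,\mathrm{curl}(xh_\ell^{(1)}(\lambda r)Y_{\ell m})$. This parallels the second part of the proof of Theorem \ref{K2.43}, where the identity $\mathrm{curl}\,\mathrm{curl}=\Delta+\nabla\mathrm{div}$ combined with the scalar Helmholtz equation for $h_\ell^{(1)}(\lambda r)Y_{\ell m}$ gives the desired factorization; uniqueness of the combination then follows from the Wronskian nondegeneracy of $j_\ell$ and $h_\ell^{(1)}$. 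Once this is done, setting $Y=\sum_{\ell,m}(\alpha_{1,\ell,m}\Psi_{1,\ell,m}+\alpha_{2,\ell,m}\Psi_{2,\ell,m})\in L^2_t(\mathbb{S}^2,\mathbb{C}^3)$ yields $f=\tilde{h}^{(1)}_{\lambda,Y}(Y)$, with convergence of the series in $C^{\infty}$ on compact subsets of $\mathbb{R}^3\setminus\overline{B_1}$ guaranteed by the smoothness of $f$ and the rapid decay of vector spherical harmonic coefficients of smooth fields.
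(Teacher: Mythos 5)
Your proposal is correct but takes a genuinely different route from the paper. The paper's proof establishes the lemma by first noting that $\tilde{h}^{(1)}_{\lambda,Y}(Y)$ is itself a divergence-free free-space Maxwell solution (by re-running Theorem \ref{K2.43} with $h_\ell^{(1)}$ in place of $j_\ell$), and then compares any outgoing $f$ with it indirectly, via the explicit free-space Helmholtz kernel $R_{0,\lambda}(x,y)=e^{\rmi\lambda|x-y|}/4\pi|x-y|$ and the large-argument asymptotics of $h_n^{(1)}$ and $\frac{d}{dz}h_n^{(1)}$ from \eqref{eqn:Hankelerror}; the identification is then a matching-of-asymptotics argument in the style of Rellich uniqueness (Theorems 4.17--4.18 in \cite{DZbook}), which is why Proposition \ref{OSprop} is placed immediately before the lemma. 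You instead work entirely by separation of variables: expand $f$ in the VSH basis, solve the resulting radial ODE system per $(\ell,m)$, and use the outgoing condition plus the divergence-free constraint to collapse to a one-parameter family in each sector. Both are valid. The paper's route is shorter because it leans on the kernel comparison and never has to track the coupling between $\Psi_{2,\ell,m}$ and $\Psi_{3,\ell,m}$ explicitly; your route is more elementary and self-contained, but it requires carrying out the coupled radial ODE reduction carefully (a step you correctly flag as the main burden). In particular, the dimension count in your argument deserves to be made explicit: for the $(\Psi_2,\Psi_3)$ pair the second-order system gives a four-parameter local solution space, the outgoing condition at infinity kills two, and since $\mathrm{div}$ commutes with the flat vector Laplacian the scalar $\mathrm{div}\,f$ is itself an outgoing Helmholtz solution, so $\mathrm{div}\,f=0$ is exactly one further linear condition, leaving the one-dimensional family generated by $\mathrm{curl}\,\mathrm{curl}(x\,h_\ell^{(1)}(\lambda r)Y_{\ell m})$. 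Once that is spelled out your proof is complete and in fact avoids invoking the explicit kernel $R_{0,\lambda}$ at all.
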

\begin{proof}
By repeating the proof of Theorem \ref{K2.43} (Theorem 2.43 in \cite{KH}) using $r\hat{r}h_{\ell}^{(1)}(\lambda r)$ instead of $r\hat{r}j_{\ell}^{1}(\lambda r)$ we have a free space solution of the Maxwell system \ref{system}, as remarked in \cite{KH}. Now we need to compare this to the exact solution. The free space kernel for the scalar Helmholtz equation is in three dimensions 
\begin{align}
R_{0,\lambda}(x,y)=\frac{e^{\rmi\lambda|x-y|}}{4 \pi |x-y|}.
\end{align} 
The leading order asymptotic terms coincide with the free space kernel of using the expansions in the previous part of this appendix subsection with $j_{\ell}(\lambda r)$ replaced by $h_{\ell}^{(1)}(\lambda r)$ and using equation \eqref{eqn:Hankelerror} and \eqref{recurrence}. Indeed we see that for every $n\in \mathbb{N}$ and $z\in \mathbb{C}$ that 
\begin{align}
h_n^{(1)}(z)=\frac{e^{i(z-\frac{\pi}{2}(n+1))}}{z}\left(1+O(\frac{1}{|z|})\right)  \quad |z|\rightarrow \infty
\end{align}
and also 
\begin{align}
\frac{d}{dz}h_n^{(1)}(z)=\frac{e^{i(z-\frac{\pi}{2}n)}}{z}\left(1+O(\frac{1}{|z|})\right) \quad |z|\rightarrow \infty
\end{align}
uniformly with respect to $\frac{z}{|z|}$, which concludes the proof. (The equivalence is standard, see Definition 4.16, interpretation eq 4.4.4, with Theorem 3.37 in \cite{DZbook}) . 
\end{proof}

\subsection{Vector Spherical Coordinates}\label{VSH}
This short section is a review of some necessary facts about spherical coordinate systems. It is taken directly from example A.17 in \cite{KH}, page 328. We consider the sphere of radius $r$. We can parametrize the boundary of a ball of radius $r$ in local coordinates as follows
\begin{align}
\sigma(\theta,\phi)=r(\sin\theta\cos\phi,\sin\theta\sin\phi,\cos\theta)^t.
\end{align}
The surface gradient and the surface divergence respectively on the boundary of the sphere are given by 
\begin{align*}
&\mathrm{Grad} f(\theta,\phi)=\frac{1}{r}\frac{\partial f}{\partial\theta}(\theta,\phi)\hat{\theta}+\frac{1}{r\sin\theta}\frac{\partial f}{\partial\varphi}(\theta,\phi)\hat{\varphi} \\&
\mathrm{Div} F(\theta,\phi)=\frac{1}{r\sin\theta}\frac{\partial}{\partial\theta}(\sin\theta F_{\theta}(\theta,\phi))+\frac{1}{r\sin\theta}\frac{\partial F_{\phi}}{\partial\phi}(\theta,\phi)
\end{align*}
where $\hat{\theta}=(\cos\theta\cos\phi,\cos\theta\sin\phi,-\sin\theta)^t$ and $\hat{\phi}=(-\sin\phi,\cos\phi,0)^t$ are the vectors which span the tangent plane and $F_{\theta},F_{\phi}$ are the components of $F$ with respect to these vectors, that is $F=F_{\theta}\hat{\theta}+F_{\phi}\hat{\phi}$. When spherical coordinates are used $x=r\hat{r}\in\mathbb{R}^3$, the surface differential operators with respect to the unit sphere are used instead of the operators on the boundary, $\partial D$ of $$D=\{x\in\mathbb{R}^3: |x|=r\}.$$ We indicate this by using the index $\mathbb{S}^2$ for the differential operators with respect to the sphere with $r=1$. By scaling invariance we have 
\begin{align}
\mathrm{Grad}_{\mathbb{S}^2}f(r,\hat{r})=r\mathrm{Grad}f(r\hat{r}) \qquad \mathrm{Div}_{\mathbb{S}^2}F(r,\hat{r})=r\mathrm{Div}F(r\hat{r}).
\end{align}
It is understood that $f(r,\cdot)$ is a function of the unit sphere on the right hand side and on the left hand side it is a function on $\partial D$ on the right hand side. This scaling allows us to create divergence free generalized eigenfunctions for Maxwell's equations using a basis of vector spherical harmonics. \\

We also recall here that we have the associated Legendre functions $P_{\nu}^{\mu}(\pm x)$ or $P_{\nu}^{-\mu}(\pm x)$ solves the equation given by (14.21.1) in \cite{olver2010nist}
\begin{align}\label{legendre}
\left(1-x^{2}\right)\frac{{\mathrm{d}}^{2}w}{{\mathrm{d}x}^{2}}-2x\frac{%
\mathrm{d}w}{\mathrm{d}x}+\left(\nu(\nu+1)-\frac{\mu^{2}}{1-x^{2}}\right)w=0.
\end{align}
The functions $P_n^m(\cos\theta)e^{\pm \rmi m\varphi}$ with $0\leq m\leq n$ are harmonic functions on $|r|=1$ and are homogeneous polynomials of degree $n$. (c.f. Theorem 2.10 in \cite{KH}, for a proof). \\ 
\\

\textbf{Acknowledgements:} The authors wish to thank Alexander Strohmaier for discussions during the course of this paper. Part of this research was carried while the author A. W. was at the Erwin Schr\"odinger Institute Thematic Program "Spectral Theory and Mathematical Relativity." A.W. wishes to thank the Erwin Schr\"odinger Institute for its hospitality during this time. The author Y-L F. was supported by EPSRC postdoctoral fellowship EP/V051636/1. The author A.W. wishes to thank Oana Ivanovici for editorial comments as well as the anonmyous referees.


\begin{thebibliography}{8}\small{
		
\bibitem{BCDbook}
H.~Bahouri, J.-Y.~Chemin and R.~Danchin.
\newblock Fourier Analysis and Nonlinear Partial Differential Equations.
\newblock {\em Grundlehren der mathematischen Wissenschaften}, 2011.

\bibitem{bs}
N. Burq and R. Schippa.
\newblock Strichartz estimates  for Maxwell equations on domains with perfectly conducting boundary conditions.
\newblock preprint arXiv:2304.13368, 2023 	

\bibitem{Sc1}
P. D'Acona and R.~ Schnaubelt.
\newblock Global Strichartz estimates for an inhomogeneous Maxwell system.
\newblock {\em Comm. Partial Differential Equations} 47 630-675, 2022.

\bibitem{dl}T. Duyckaerts and D. Lafontaine. 
\newblock Scattering for critical radial Neumann waves outside a ball.
\newblock {\em Revista Matemática Iberoamericana}, 2021.	

\bibitem{DZbook}
S.~Dyatlov and M.~Zworski.
\newblock Mathematical Theory of Scattering Resonances.
\newblock {\em Graduate Studies in Mathematics}, 2019.

\bibitem{scat}
A. Ericsson, D. Sj\"oberg, C. Larsson, T. Martin.
\newblock Scattering from a multilayered sphere- Applications to electromagnetic absorbers on double curved surfaces.
\newblock Technical Report, Lund University, Sweden. Vol 7249, 2017. 

\bibitem{Galk}
J.~ Galkowski, P. Marchang, J. Wang, and M. Zworski.
\newblock The scattering phase seen at last.
\newblock {\em Siam Journal on Applied mathematics}, 84(1),  2024

\bibitem{gs} C.~Guillarmou and D.~Sher. 
\newblock Low energy resolvent for the Hodge Laplacian: Applications to Riesz transform, Sobolev estimates and analytic torsion.
\newblock {\em International Mathematics Research Notices}, 1--75, 2015.

\bibitem{gh1}
\newblock C. Guillarmou, A. Hassell. 
\newblock The resolvent at low energy and Riesz transform for Schr\"odinger operators on asymptotically conic manifolds, Part I.
\newblock {\em Math Annalen}. 341(4), 859-896, 2018.

\bibitem{gh2} C. Guillarmou, A. Hassell,
\newblock The resolvent at low energy and Riesz transform for Schr\"odinger operators on asymptotically conic manifolds, Part II.
\newblock {\em Ann. Inst. Fourier} 59(2), 1553-1610, 2009

\bibitem{gh3}
\newblock C. Guillarmou, A. Hassell,
\newblock Uniform Sobolev estimates for non-trapping metrics, 
\newblock {\em Journal of Inst. Math. Jussieu}, 13(3), 599-632, 2014. 

\bibitem{KH} A. Kirsch and F. Hettlich.
\newblock The Mathematical theory of time-harmonic Maxwell's equations 
\newblock Springer International. Switzerland 2015.

\bibitem{HL}
T.~ Harge and G.~ Lebeau. 
\newblock Diffraction par un convexe. 
\newblock {\em Invent. Math.}, 118:161–196, 1994.

\bibitem{hintz}
\newblock P. Hintz,
\newblock A Sharp Version of Price’s Law for Wave Decay on Asymptotically Flat Spacetimes, 
\newblock {\em Commun. Math. Phys.}, 389, 491-542, 2022. 

\bibitem{IS} O. Ivanovici. 
\newblock 
On the Schr\"odinger equation outside strictly convex obstacles. 
\newblock {\em Anal. PDE}, 3(3):261–293, 2010.

\bibitem{LI} O.Ivanovici and G.Lebeau.
\newblock Dispersion estimates for the wave and the Schrödinger equations outside a ball and counterexamples.
\newblock preprint arXiv:2012.08366, 2021. 

\bibitem{LI2} O.Ivanovici and G.Lebeau.
\newblock Dispersion for the wave and the Schr\"odinger equations outside strictly convex obstacles and counter examples.
\newblock C.R. Acad. Sci. Paris, Ser. I. 355(7) 774--779, 2017. 

\bibitem{LP}
P.~ Lax and R. Phillips.
\newblock Scattering theory.
\newblock Academic Press. New York and London, 1967.

\bibitem{VK}
R.~Killip, M. Visan and X. Zhang.
\newblock
Quintic NLS in the exterior of a strictly convex obstacle.
\newblock {Amer. J. Math} 138(2016) no. 5, 1193-1346. 

\bibitem{melrosescattering}
 R. B. Melrose. 
\newblock Singularities and energy decay in acoustical scattering.
\newblock \emph{Duke Mathematical Journal}, 46(1):43–59, 1979.

\bibitem{melrosebook}
R. B. Melrose.
\newblock Geometric Scattering Theory. Cambridge, UK: Cambridge University Press, 1995. 

\bibitem{melroseT}
R.~ B. Melrose and M.~ E. Taylor. 
\newblock Near peak scattering and the corrected Kirchhoff approx-
imation for a convex obstacle. 
\newblock \emph{Adv. in Math.}, 55(3):242–315, 1985.

\bibitem{melroseT2} R. B. Melrose and M. E. Taylor. 
\newblock The radiation pattern of a diffracted wave near the shadow
boundary. 
\newblock \emph{Comm. Partial Differential Equations}, 11(6):599–672, 1986.

\bibitem{melroseT3} R. B. Melrose and M.E. Taylor. 
\newblock Boundary problems for the wave equations with grazing
and gliding rays.
\newblock https://mtaylor.web.unc.edu/wp-content/uploads/sites/16915/2018/04/glide.pdf,  1987.

\bibitem{mie}
G. Mie 
\newblock Beiträge zur Optik trüber Medien, speziell kolloidaler Metall\"osungen.
\newblock Annalen der Physik. 330 (3): 377–445, 1908. 


\bibitem{wunsch}
K. Morgan and J. Wunsch,
\newblock {Generalized Price's law on fractional-order asymptotically flat stationary spacetimes.}
\newblock {\em Mathematical Research Letters}. 30, 2023.

\bibitem{olver2010nist}
F.~W.~J.~Olver, D.~W.~Lozier, R.~F.~Boisvert, and C.~W.~Clark.
\newblock Nist digital library of mathematical functions.
\newblock Online companion to: http://dlmf. nist. gov, 2010.

\bibitem{PZ}
 V. Petkov and M. Zworski. 
 \newblock{Semi-classical estimates on the scattering determinant.}
 \newblock {\em In Annales Henri Poincare} 2, pages 675–711. Springer, 2001.

\bibitem{Parnovski}
L.~ Parnovski.
\newblock{ Scattering matrix for manifolds with conical ends.}
\newblock{\em  J. London Math. Soc}. 61(2):555–567, 2000.

\bibitem{Sc2}
R.~Schnaubelt and M.~Spitz. 
\newblock Local wellposedness of quasilinear Maxwell equations with conservative interface conditions
\newblock {\em Commun. Math. Sci} 20, 2265-2313, 2022

\bibitem{sp1}
R. Schippa 
\newblock Strichartz estimates for Maxwell's equations in media, The partially anisotropic case
$ArXiv. 2108.07691$, 2022 (To appear CPDE)

\bibitem{sp2}
R. Schnaubelt and R. Schippa
\newblock  Strichartz estimates for Maxwell equations in media: the fully anisotropic case. 
\newblock Journal of Hyperbolic Differential Equations 20(04) 917-966, 2023 

\bibitem{sp3}
R. Schnaubelt and R. Schippa
\newblock Strichartz estimates for Maxwell equations in media: the structured two-dimensional case.
\newblock Archiv der Mathematik, 121(4), 2023

\bibitem{hs}
H.~Smith and C. Sogge.
\newblock On the critical semilinear wave equation outside convex obstacles
\newblock {\em Journal of the American Mathematical Society} 8(4) 879-916,1995

\bibitem{stratton}
 J.A. Stratton. 
 \newblock Electromagnetic Theory.
 \newblock New York: McGraw-Hill, 1941

\bibitem{AS}
A.~Strohmaier.
\newblock The classical and quantum photon field for non-compact manifolds with boundary and in possibly inhomogeneous media. 
\newblock {\em Comm. Math. Phys.}, 1--21, 2021. 

\bibitem{OS}
A.~Strohmaier and A.~Waters.
\newblock Geometric and obstacle scattering at low energy. 
\newblock {\em Comm. Partial. Diff. Eq.}, 45(11): 1451-1511, 2020.

\bibitem{RBM} A. Strohmaier and A. Waters. 
\newblock The Birman-Krein formula for differential forms and electromagnetic scattering. 
\newblock {\em Bull. Sci. Math.} 179,  2022.

\bibitem{AWE} A. Strohmaier and A. Waters.
\newblock The relative trace formula in electromagnetic scattering and boundary layer operators
\newblock {ArXiv} 2111.15331, 2021. (To appear Analysis and PDE) 

\bibitem{Zscatter}
 M. Zworski.
 \newblock High frequency scattering by a convex obstacle.
 \newblock {\em Duke Math. J.}, 61(2):545–634, 1990. }




\end{thebibliography}
\end{document}